\documentclass[12pt]{amsart}

\usepackage{amsmath,amsthm}
\usepackage{amssymb}
\usepackage{hyperref}
\usepackage{enumitem}

\usepackage{graphicx}

\makeatletter
\@namedef{subjclassname@2020}{%
  \textup{2020} Mathematics Subject Classification}
\makeatother

\usepackage[T1]{fontenc}

\usepackage{latexsym,graphicx}
\usepackage{float} 
\usepackage{pgfplots}
\usepackage{hyphenat}

\newtheorem{theorem}{Theorem}
[section]
\newtheorem{corollary}[theorem]{Corollary}
\newtheorem{lemma}[theorem]{Lemma}
\newtheorem{proposition}[theorem]{Proposition}

\newtheorem*{Lunc}{Luntz Theorem}

\newtheoremstyle{myremark}
  {6pt}   
  {6pt}   
  {\normalfont} 
  {}      
  {\bfseries} 
  {.}     
  {.5em}  
  {}      

\theoremstyle{myremark}
\newtheorem{rem}[theorem]{Remark}

\theoremstyle{plain}

\newcommand{\eps}{\varepsilon}

\numberwithin{equation}{section}
\theoremstyle{definition}
\newtheorem{example}{Example}[section]

\pgfplotsset{compat=1.17} 
\pgfplotsset{soldot/.style={only marks,mark=*, line width=0.2pt, mark size=1pt}}
 
\begin{document}


\baselineskip=17pt

\title[Uniqueness sets with angular density, III]{Uniqueness sets with angular density for spaces of entire functions, III:\\ how to minimize the type. }

\author[A. Kononova]{Anna Kononova}
\address{ \parbox{\textwidth}{
School of Mathematical Sciences\\
Holon Institute of Technology\\
Holon 5810201, Israel
}
}
\email{anya.kononova@gmail.com}

\begin{abstract}
    This note is the third part of our work devoted to uniqueness sets for spaces of entire functions. 
    Given a discrete set $\Lambda$ with angular density $\Delta$ with respect to the order $\rho$, satisfying some  regularity condition, we show that there exists a type-minimizing measure $\Delta_0$  with less than $2\rho$ discrete masses. { For the case $\rho=2$, the value of the critical uniqueness type is found in geometric terms.
    }
    
\end{abstract}

\subjclass[2020]{Primary 30D15.}

\keywords{entire functions, zero distribution, uniqueness set}
\date{\today}
\maketitle

\section{Introduction}

Let $\mathcal E_{\rho,\sigma}$ be the set of entire functions of type $\sigma$ with respect to the order $\rho>0$, 
 that is, 
for each $\eps>0$ there exists $C_\eps$ such that 
$$|f(z)|\le C_\eps e^{(\sigma+\eps)|z|^\rho}, \;\;\; z\in\mathbb C.$$
By  $\mathcal E_{\rho}$ we denote the set of all entire functions of  order $\rho$ and at most normal type: $\mathcal E_{\rho}:=\bigcup_{\sigma\ge 0} \mathcal E_{\rho,\sigma}$.

Let $\Lambda=(\lambda_k)$ be a discrete sequence in $\mathbb C$, counted with
multiplicities and ordered so that
\[
1\le |\lambda_1|\le |\lambda_2|\le\cdots .
\]

\begin{rem}
Zeros on $\Lambda$ are understood with the multiplicities prescribed by the
sequence. Thus, vanishing on $\Lambda$ means vanishing with at least these
multiplicities, while being a zero set means equality of multiplicities.
\end{rem}

We will use the following notation  $$n_\Lambda(R;\alpha, \beta)=\#\{\lambda_k\in\Lambda:|\lambda_k|<R, \;\;\arg \lambda_k\in(\alpha,\beta)\}$$ 
for the number of points of $
\Lambda$ in a given circular sector.

Given a finite Borel measure $\Delta$ on the interval $[0,2\pi]$, we say that $\Lambda$ has an {\sf angular density $\Delta$  with respect to the order $\rho$} if for any $\alpha$ and $\beta$ except for some countable set $X$ there exists a finite limit 
$$\lim_{r\to\infty}\frac{n_\Lambda(r;\alpha, \beta)}{ r^\rho}=\Delta(\alpha,\beta).$$ 

\begin{rem}\label{mod}
Given $p>0$, we regard points of $[0,2\pi p]$ modulo $2\pi p$, that is,
as points of the quotient space $\mathbb R/(2\pi p\mathbb Z)$ under the
canonical projection. In particular, the endpoints $0$ and $2\pi p$ are
identified. We denote by $d_{2\pi p}$ the quotient distance on
$\mathbb R/(2\pi p\mathbb Z)$ induced by the Euclidean metric on $\mathbb R$.
\end{rem}

 We will say that a set $\Lambda$ with  angular density $\Delta$  with respect to the order $\rho$ is {\sf regular}  (and will write $\Lambda\in AD(\Delta,\rho)$)  if either $\rho\notin \mathbb N$, or, if $\rho\in\mathbb N$, then  the following sums are bounded for $0<R<\infty$:
    $$s(R)=\sum_{\substack{\lambda_k\in\Lambda,\\|\lambda_k|< R}}\frac 1{\lambda_k^\rho}.$$
 We refer to this property as the {\sf Lindel\"of condition}.
It follows from the Lindel\"of condition  (see \cite[Chapter II, Sec.3]{Levin}), 
that the measure $\Delta$ has zero $\rho$-th moment:
\begin{equation}\label{Lindelof}
    \int_0^{2\pi} e^{it\rho}d\Delta(t)= 0.
\end{equation}
In this case we will say that $\Delta$ is a {\sf regular measure}.

 Note that the regularity of $\Delta$  \eqref{Lindelof} does not necessarily imply that every $\Lambda$ with angular density $\Delta$ is regular. Nevertheless, the situation sometimes can be improved by adding a certain "rare" set to $\Lambda$ to make it regular, as the following result shows.

 \begin{Lunc}{\rm \cite{Krivosh,Grishin,Lunc}}
    Let $\rho\in\mathbb N$. For any  discrete set $\Gamma$ with regular angular distribution $\Delta_{\Gamma}$ with respect to the order $\rho$, 
    there exists a set $\Gamma^*\supset\Gamma,$ such that 
$\Delta_{\Gamma^*}=\Delta_{\Gamma}$   and 
\begin{equation*}
\lim_{r\to\infty}\left(\sum_{\substack{\lambda_k\in\Gamma^*,\\|\lambda_k|<r}}\frac1{\lambda^\rho_k}\right)=0.
    \end{equation*}
\end{Lunc}
 Throughout the paper, unless explicitly stated otherwise, we will assume that $\Lambda$ is regular.

A   sequence  $\Lambda=(\lambda_k)\in \mathbb C$ is  called a {\sf zero set} of $\mathcal E_{\rho,\sigma}$, if 
there exists $f\in \mathcal E_{\rho,\sigma}$ such that $f^{-1}\{0\}=\Lambda$.

A   sequence  $\Lambda=(\lambda_k)\in \mathbb C$ is  called a {\sf uniqueness set} of $\mathcal E_{\rho,\sigma}$, if the only function from $\mathcal E_{\rho,\sigma}$ that vanishes on $\Lambda$ is the zero function:
$$f\in\mathcal E_{\rho,\sigma}\;\;\;{\it and}\;\;f|_\Lambda \equiv 0\;\;\Rightarrow \;\;\;f\equiv 0.$$ Otherwise,  it is called a     {\sf nonuniqueness set}.

 This note is the third part of our work devoted to zero and uniqueness sets for spaces of entire functions. The first part \cite{PartI} was concerned with some basic facts on the subject. { For the reader’s convenience, the relevant results from Part I will be briefly recalled in Section \ref{Recall}. In particular, it was proven in \cite[Corollary 1.1, Theorem 1.3]{PartI}  that the fact that a set 
  $\Lambda\in AD(\Delta,\rho)$ is a zero set (or a uniqueness set) for $\mathcal E_{\rho,\sigma}$  depends only on
$\Delta$ and $\rho$, rather than on the particular set $\Lambda$ itself. It allows us to introduce the following definitions.

{We define the {\sf critical zero set type for a measure}  $\Delta$:
$$\sigma_Z(\Delta,\rho)=\inf\left\{\sigma: \exists f\in \mathcal E_{\rho,\sigma},\;\;f^{-1}\{0\}\in AD(\Delta,\rho)\right\},$$
and the {\sf critical uniqueness  type 
for a measure $\Delta$}
$$\sigma_U(\Delta,\rho):=\inf\{\sigma: \exists\Lambda\in AD(\Delta,\rho),\;\; \exists f\in \mathcal E_{\rho,\sigma}\setminus \{0\}: \;f|\Lambda=0\}.$$
}
Later we will see that, in each case, the infimum is attained.
\begin{rem}\label{Rem2}
    Note that, due to \cite[Theorem 1.3]{PartI} (see also Section \ref{Recall} below), the definition of $\sigma_U(\Delta, \rho)$  is equivalent to
$$\sigma_U(\Delta,\rho):=\inf\{\sigma: \forall\Lambda\in AD(\Delta,\rho),\;\; \exists f\in \mathcal E_{\rho,\sigma}\setminus \{0\}: \;f|\Lambda=0\}.$$
\end{rem}
}

{For $\rho=1,$ the value $\sigma_Z(\Delta,1)=\sigma_U(\Delta,1)$ has a simple geometric meaning \cite{Azarin-Giner, Khabibullin2} (see also Section \ref{Recall}).
In this case, for regular $\Delta$, there exists the associated planar convex compact set $I_\Delta$, such that $\sigma_U(\Delta,1)$ equals the circumradius $R_\Delta$ of the $I_\Delta$.
}
 In \cite{PartI}, it was demonstrated  how this geometric approach can be extended to determine the value of critical zero type in geometrical terms for arbitrary $\rho\in\mathbb N$.
In addition, it was established in which cases the quantities $\sigma_Z(\Delta,\rho)$ and $\sigma_U(\Delta,\rho)$ coincide.  

Here, our focus shifts to the case where $\sigma_U(\Delta,\rho)<\sigma_Z(\Delta,\rho).$
Recall (see Section \ref{Recall}), that this is possible if and only if $\rho\in (1/2,1)\cup(1,\infty)$. We will fix a value of $\rho$ within this range and omit it from the notation of the critical zero type and critical uniqueness type:
$$\sigma_U(\Delta):=\sigma_U(\Delta,\rho),\;\sigma_Z(\Delta):=\sigma_Z(\Delta,\rho).$$

  We will say that the measure $\Delta^*$ is {\sf type-minimizing} with respect to the measure $\Delta$, if 
\begin{equation}
    \label{type-min}
\sigma_U(\Delta)=\sigma_Z(\Delta+\Delta^*)=\sigma_U(\Delta+\Delta^*).\end{equation}

\begin{theorem}\label {T1}
    Given a regular measure $\Delta$, 
     there exists a type-minimizing measure 
\begin{equation}
    \label{type-min-Delta}
\Delta^*=\sum_{j=1}^{L} A_j\delta_{\alpha_j},
\end{equation}
where $0\le L<2\rho$, $A_j\ge0$ and $\displaystyle\min_{i\ne j}d_{2\pi}(\alpha_i,\alpha_j)>\dfrac\pi\rho.$   
 \end{theorem}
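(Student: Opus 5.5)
We would work with the indicator description from Part I, recalled in Section \ref{Recall}. For a regular measure $\Delta$ there is an associated $\rho$-trigonometrically convex function $h_\Delta$ whose Riesz measure is $2\pi\rho\Delta$; it is determined up to adding $a\cos\rho\theta+b\sin\rho\theta$ when $\rho\notin\mathbb N$ (for $\rho\in\mathbb N$ the $\rho$-th moment condition \eqref{Lindelof} makes it consistent and periodic). We use the characterizations
\[
\sigma_Z(\Delta)=\min_{a,b}\max_\theta\bigl(h_\Delta(\theta)+a\cos\rho\theta+b\sin\rho\theta\bigr),
\]
and
\[
\sigma_U(\Delta)=\min\{\max_\theta h: h \ \rho\text{-t.c. and } 2\pi$-periodic, $h''+\rho^2h\ge 2\pi\rho\Delta\},
\]
since a function vanishing on $\Lambda$ has zero set $\Lambda$ plus extra zeros, that is, Riesz measure $\Delta+\Delta'$ with $\Delta'\ge0$. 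If these formulas are not literally stated in Part I, we would derive them from the results quoted there: the type of a function is at least the maximum of its indicator, and by the Levin--Pfluger theory of completely regular growth every admissible $h$ is realized.

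First, by compactness (normalizing the maxima and using Helly selection on the extra measures $\Delta'$, whose total mass is bounded in terms of $\max h$), the minimum defining $\sigma_U(\Delta)$ is attained by some $h_0$ with $\max h_0=\sigma:=\sigma_U(\Delta)$, corresponding to $\Delta+\Delta'$. The candidate is obtained by replacing $h_0$ with an extremal function. Set
\[
E=\{\theta: h_0(\theta)=\sigma\}.
\]
On each complementary arc of $E$, the function $h_0$ may be replaced by the $\rho$-trigonometric solution of $h''+\rho^2h=2\pi\rho\Delta$ with the same boundary values $\sigma$, i.e. the smallest $\rho$-t.c. majorant of $\Delta$ on that arc. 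Its maximum principle (comparison on arcs of length $<\pi/\rho$, and, on longer arcs, the fact that $h_0\le\sigma$ already) keeps the new function $\le\sigma$. After this replacement the excess measure $\Delta^*:=(h''+\rho^2h)/(2\pi\rho)-\Delta$ is supported on $E$. On any arc of $E$ of positive length $h\equiv\sigma$, so $h''+\rho^2h=\rho^2\sigma$ is absolutely continuous. To collapse such arcs, and also to collapse clusters of points of $E$ that lie within $\pi/\rho$ of each other, we use the convexity of the constraint set: between two maximum points at distance $\le\pi/\rho$, an arc of $\rho$-sinusoid $\sigma\cos\rho(\theta-c)$ with amplitude $\sigma$ fits under the level $\sigma$. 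Hence the excess mass can be moved to the endpoints of a maximal cluster, with the resulting maximum points separated by more than $\pi/\rho$. This yields $\Delta^*$ with atoms $\alpha_j$ satisfying $d_{2\pi}(\alpha_i,\alpha_j)>\pi/\rho$, so $L\cdot\pi/\rho<2\pi$, i.e. $L<2\rho$.

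Finally, we verify \eqref{type-min}. The new $h$ is the indicator associated with $\Delta+\Delta^*$, and its maximum equals $\sigma$, so $\sigma_Z(\Delta+\Delta^*)\le\sigma$. Conversely, $\sigma_U(\Delta+\Delta^*)\ge\sigma_U(\Delta)=\sigma$ by monotonicity, since a larger measure gives a smaller family of admissible functions. Together with $\sigma_U\le\sigma_Z$ this gives the chain of equalities. We must also ensure that $\Delta+\Delta^*$ is regular. In the integer case this means showing that the $\rho$-th moment of $\Delta^*$ vanishes, which follows from the periodicity of $h$ (integrating $h''+\rho^2h$ against $e^{i\rho\theta}$ gives $0$). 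Then the Luntz Theorem lets us realize a regular set with that density.

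The main obstacle is the collapsing step: showing rigorously that the excess mass between nearby maximum points can be concentrated into atoms separated by more than $\pi/\rho$ without raising the maximum. This requires a careful local comparison lemma for $\rho$-trigonometrically convex functions on arcs of length $\le\pi/\rho$. We would try to reduce it to the fact that on such arcs the $\rho$-t.c. minorant with prescribed endpoint values is dominated by a single sinusoid of amplitude $\sigma$ centred between the endpoints.
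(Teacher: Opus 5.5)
There is a genuine gap in the replacement and collapsing step, and it cannot be repaired by a sharper local comparison lemma.

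\textbf{The replacement goes the wrong way.} On an arc $(a,b)$ of length $<\pi/\rho$, let $g$ solve $g''+\rho^2g=2\pi\rho\Delta$ with $g=h_0$ at the endpoints. Then $h_0-g$ is $\rho$-trigonometrically convex with zero boundary values, so $h_0\le g$. The solution carrying the \emph{smaller} measure is therefore a majorant of $h_0$, and it can exceed $\sigma$. For example, if $\Delta$ vanishes on $(a,b)$, then $g(t)=\sigma\cos\rho(t-\frac{a+b}2)/\cos\frac{\rho(b-a)}2$, whose maximum is $>\sigma$. On arcs of length $\ge\pi/\rho$ there is no comparison principle at all. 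Gluing at points of $E$ would also require $g'_\pm=0$ there, which the Dirichlet solution does not give.

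\textbf{The target configuration is impossible.} At a maximum point $\theta_0$ of a $\rho$-trigonometrically convex function we have $h'_-(\theta_0)\ge0\ge h'_+(\theta_0)$ and also $h'_-\le h'_+$. So both one-sided derivatives vanish and the Riesz measure has no atom at $\theta_0$. A purely atomic $\Delta^*$ supported on the maximum set, or on endpoints of clusters of maximum points, is therefore zero. Your construction would then give $\sigma_U(\Delta)=\sigma_Z(\Delta)$. This fails whenever $\widehat h_\Delta$ is not locally $\rho$-balanced (Theorem~\ref{PartI}(5)), for instance in Example~\ref{ex}. Lemma~\ref{alpha} shows the opposite of your picture: the atoms of a type-minimizing elementary function sit strictly \emph{below} the maximal level.

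\textbf{The missing idea.} Separate the excess from $\Delta$ and simplify it by going \emph{down}. Write $h_0=h_\Delta+k_0$ with $k_0\in TC_\rho$. Since $\max(h_\Delta+k)$ is monotone in $k$, every $\rho$-trigonometrically convex minorant of $k_0$ is again a minimizer, and $\Delta$ plays no role in simplifying $k_0$. The paper's Lemmas~\ref{lemma2}--\ref{lemma4} supply such a minorant that is elementary with widely spaced singular points, in two steps.
\begin{enumerate}
\item At the endpoints of each negativity interval $(\alpha_j,\beta_j)$ of $k_0$, take the tangent sinusoids $A_j\sin\rho(\alpha_j-t)$ and $B_j\sin\rho(t-\beta_j)$. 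Each is kept for a length $\pi/\rho$ past its zero, and both lie below $k_0$ by Lemma~\ref{lemma_h'}. Set the function to $0$ elsewhere.
\item Merge any two singular points at distance $\le\pi/\rho$ into one by taking the maximum of the two neighbouring one-sided sinusoids. This again lowers the function.
\end{enumerate}

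\textbf{What survives from your proposal.} Your compactness step, which produces an attained minimizer $h_0$, is sound. Combined with this minorant lemma it even makes the paper's final compactness over finitely many parameters unnecessary. Your closing chain $\sigma_U(\Delta)\le\sigma_U(\Delta+\Delta^*)\le\sigma_Z(\Delta+\Delta^*)\le\sigma$ and the regularity check are also fine.

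\textbf{A minor slip.} You swapped the cases: $h_\Delta$ is unique for $\rho\notin\mathbb N$ and determined modulo $T_\rho$ for $\rho\in\mathbb N$. Accordingly, the formula with $\min_{a,b}$ for $\sigma_Z$ is the integer case.
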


We prove this theorem in {Section \ref{sec1}}. Moreover, as Example \ref{ex1} shows,  for every $\rho\in (1/2, 1)\cup (1,\infty)$  there exists a regular measure $\Delta_\rho$ such that any type-minimizing purely-discrete measure $\Delta_\rho^*$ has at least  $L=\lceil 2\rho\rceil-1$ mass points:
$$\Delta_\rho^*=\sum_{j=1}^{L} A_j\delta_{\alpha_j},\;\;\;A_j>0,$$
where $\lceil x\rceil$ denotes the ceiling function. Hence, the bound $L<2\rho$ cannot, in general,  be improved.

In {Section \ref{sec2}}, we switch  to a geometric approach.
 We introduce the notion of a locally convex curve
associated with a regular measure $\Delta$, and define its maximal local
circumradius $R^*_{\rm loc}$. We prove that this geometric quantity coincides
with the critical uniqueness type:
\[
\sigma_U(\Delta)=R^*_{\rm loc}.
\]
Moreover, the infimum in the definition of the critical type is attained:
\[
\sigma_U(\Delta)
=
\min\left\{\sigma:\ \exists \Lambda\in AD(\Delta,\rho),\
\exists f\in\mathcal E_{\rho,\sigma}\setminus\{0\}
\text{ such that } f|_\Lambda=0\right\}.
\]
Consequently, a regular set $\Lambda\in AD(\Delta,\rho)$ is a uniqueness set
for $\mathcal E_{\rho,\sigma}$ if and only if
\[
\sigma<R^*_{\rm loc}.
\]

In { Section \ref{sec4}}, we consider several particular cases to provide a geometric explanation of how a discrete type-minimizing measure can be constructed in terms of the locally convex curve 
 $K$.

\section{Preliminaries}

This section collects  several definitions and results which will be used throughout the paper.

\subsection{\texorpdfstring{$\rho$}{rho}-trigonometrically convex functions.}\label{rho-prelim}
A $2\pi$-periodic function $h:\mathbb R\to\mathbb R$ is called  {\sf $\rho$-trigonometrically convex} if  for any $t_1<t_2<t_3$ such that $t_3-t_1<\pi/\rho$, the following inequality holds:
\begin{equation}\label{TC}
     h(t_1)\sin \rho(t_2-t_3)+h(t_2)\sin\rho (t_3-t_1)+h(t_3)\sin \rho(t_1-t_2)\le 0.
\end{equation}
We denote the set of {$\rho$-trigonometrically convex functions}  by $TC_\rho.$
{For $\rho\in\mathbb N$ we also define 
$$T_\rho:=\{A\cos\rho t+B\sin\rho t: A,B\in\mathbb R\}\subset TC_\rho.$$
}

We summarize several basic properties of $\rho$-trigonometrically convex functions. Proofs and further details can be found, for example, in \cite{Levin}. 
\subsection{Basic properties of $\rho$-trigonometrically convex functions.}\label{rho-prelim-prop}
\begin{enumerate}[label={\bf\Alph*.}]
\item
For every $\alpha\in\mathbb R$, every function  $h\in TC_\rho$ has finite left and right derivatives $h'_+(\alpha), h'_-(\alpha)$. Moreover, 
$$h'_-(\alpha) \le h'_+(\alpha).$$
\item
If $h_1, h_2\in TC_\rho$, then  $\max\{h_1(t), h_2(t)\}\in TC_\rho$.
\item For every $h\in TC_\rho$ and every $\alpha\in \mathbb R$
$$h(\alpha-\dfrac{\pi}{2\rho})+h(\alpha+\dfrac{\pi}{2\rho})\ge 0.$$
In particular, 
$$\max(|h|)=\max(h).
$$
\item
A $2\pi$-periodic function $h$ is $\rho$-trigonometrically convex if and only if for all $ 0\le\alpha<\beta\le2\pi$ it holds
\begin{equation}\label{rho_tc}h'_+(\beta)-h'_-(\alpha)+\rho^2\int_\alpha^\beta h\ge0.
\end{equation}
For an interval $I\subset \mathbb R$, we will say that $h:I\to\mathbb R$ is {\sf $\rho$-trigonometrically convex on $I$}, and write $h\in TC_\rho(I)$, if  \eqref{rho_tc} holds for every $\alpha<\beta$ in $I$.

\item
Every $\rho$-trigonometrically convex  function $h$ is associated with a uniquely determined measure $\Delta_h$: 
\begin{equation}
    \label{Delta}
2\pi\rho\Delta_h([\alpha,\beta])=h'_+(\beta)-h'_-(\alpha)+\rho^2\int_\alpha^\beta h.\end{equation}

Conversely, given a regular measure $\Delta$, we define $ h_\Delta\in TC_\rho$ as follows:
\begin{equation}
    \label{h}
\hspace{0.5cm}
h_\Delta(\theta)=
\left\{
\begin{array}{lr}\displaystyle
    \frac{\pi}{\sin\pi\rho}\int_{\theta-2\pi}^\theta\cos\rho(\theta-\varphi-\pi){\rm d}\Delta(\varphi), & \;\;\;  \rho {\notin\mathbb N},\hspace{2cm}
\vspace{3pt}
    \\
      \displaystyle -\int_{\theta-2\pi}^\theta{(\varphi-\theta)}\sin\rho(\varphi-\theta){\rm d}\Delta(\varphi),&  \;\;\;\rho {\in\mathbb N.}\hspace{2cm}
    \end{array}
\right.
\end{equation}
Then (see \cite[Chapter II, Theorem 1, Theorem 2]{Levin}), 
$$\Delta=\Delta_{h_\Delta},\;\;\;\rho>0;$$
$$h(t)=h_{\Delta_h}(t),\;\;\;{\rm if} \;\;\rho\notin \mathbb N;$$
$$h_{\Delta_h}-h\in T_\rho,\;\;\;{\rm if} \;\;\rho\in \mathbb N, $$
So, for  non-integer orders $\rho$ there is one-to-one correspondence between $h\in TC_\rho$ and regular measures $\Delta_h,$ while in the case of integer $\rho$,  the reconstrucion of $h$ is possible only up to an additional trigonometric term. 

\item If the set $E_h:=h^{-1}\{(-\infty,0)\}$, where $h\in TC_\rho$, is non-empty, then it can be represented as
        $$ E_h=\bigcup_{j=1}^l(\alpha_j,\beta_j),$$
         where $h(\alpha_j)=h(\beta_j)=0,$
        $\beta_j-\alpha_j\le\pi/\rho$ and $\alpha_{j+1}-\beta_j>\pi/\rho.$

\end{enumerate}

Note that, if $h$ is the indicator of an entire function $f\in \mathcal E_{\rho,\sigma}$ with $\rho\in\mathbb N$, then adding a trigonometric term  to $h$  corresponds to multiplying $f$ by the corresponding exponential factor $e^{Cz^\rho},C\in\mathbb C$, which does not alter  the zero set of a function. 
{For $\rho\in\mathbb N$, we  say that two  $\rho$-trigonometrically convex functions  $h_1, h_2$  are {\sf equivalent}, $h_1\sim h_2$, if $$h_2-h_1\in T_\rho.$$
 Hence, in particular, if two functions $f,g$ in $ \mathcal E_{\rho,\sigma}$ have the same  zero sets $\mathcal Z_f=\mathcal Z_g,$ it follows from the Hadamard factorization theorem that the corresponding indicators are equivalent:
 $$h_{f}\sim h_{g}.$$

}

We will say that a continuous $2\pi$-periodic function $h$ is {\sf  piecewise $\rho$-trigonometric
} if  for some $L\in\mathbb N$, $0< s_1< \dots < s_L\le 2\pi$, $A_k, \theta_k\in\mathbb R$, and for every $k=1,\dots, L$, it holds
 $$h(t)=A_k\cos\rho(t-\theta_k),\;\;t\in [s_{k-1},s_{k}],$$ 
 $$h'_-(s_k)\ne h'_+(s_k).$$
where  $s_0 = s_L - 2\pi$.
{The points of the set $S := \{s_1,\dots, s_L\}$  will be called the {\sf singular points} of $h$.}

A function $h\in TC_\rho$ that is piecewise $\rho$-trigonometric will be referred to as an
{\sf elementary}
 {\sf $\rho$-trigonometrically convex.}

From \eqref{rho_tc} it follows that a piecewise   $\rho$-trigonometric function $h$  is $\rho$-trigonometrically convex if and only if 
{\begin{equation}\label{h+_}
    h'_+(s)> h'_-(s),\;\;\forall s\in S,\end{equation}  
    where $S$ is the set of singular points of $h$.}
In this case, by \eqref{Delta}, the corresponding measure $\Delta_h$ is a discrete measure with  $L$ point masses:
$$2\pi\rho\Delta_h=\sum_{k=1}^L (h'_+(s_k)- h'_-(s_k))\delta_{s_k}.$$

{\subsection{Geometrical interpretation: case $\rho=1$.}\label{rho=1}
 As already noted in the introduction, the case $\rho=1$ is special because of its useful geometric interpretation. 
 {Given  $h\in TC_1$, 
  the corresponding convex compact set $I_h\subset\mathbb C$ is defined by $$I_h:=\bigcap_{t\in[0,2\pi]} \{(x,y):x\cos t+y\sin t\le h(t)\}.$$
  The correspondence between $h\in TC_1$ and  $I_h$  is  one-to-one. {Moreover, if $h_2(t)-h_1(t)\in T_1,$
 then $I_{h_2}$ can be obtained from $I_{h_1}$ by a translation by the vector $(a,b)$.}
} }

{
\subsection{Selected Results from Part I}\label{Recall}

Here we briefly recall several definitions and results from \cite{PartI} that will be used below.

For $h\in TC_\rho$, put $M_h=
\{\theta:\;\;h(\theta)=\displaystyle
\max_{t\in\mathbb R}h(t)\}$. 

{For $\rho>0$ 
we call a function $h\in TC_\rho$  {\sf $\rho$-balanced}, if it satisfies condition $0\in { \rm conv}\{e^{it\rho}: t\in M_h\}$, where by ${\rm conv}X,$ we denote the convex hull of the plane set X. For $\rho\notin \mathbb N$ every function $h\in TC_\rho$ is automatically $\rho$-balanced,} while for an integer $\rho$, any function $h\in TC_\rho$ can be made $\rho$-balanced by adding a $\rho$-trigonometric function: 
there exists $\widehat h\sim h$ such that 
$\widehat h$ is   $\rho$-balanced. Put
\begin{equation}
    \label{lb-m}
\widehat h=\begin{cases}
    h,&\rho \notin\mathbb N,\\
   {\rm the \;\;} \rho{\rm-balanced\;\; modification\;\; of \;}h,&\rho \in\mathbb N.
\end{cases}
\end{equation}

{
We call the set $M$ {\sf  locally $\rho$-balanced} if and only if there exists a triple  of points $\{\alpha,\beta,\gamma\}\subset M$ such that
\begin{equation}\label{loc-bal}
0<\beta-\alpha\le\dfrac\pi\rho;\;\;0\le\gamma-\beta<\dfrac\pi\rho;\;\;\gamma-\alpha\ge\dfrac\pi\rho.
    \end{equation}
In particular, the degenerate 
 case when $\gamma=\beta=\alpha+\dfrac\pi\rho$ is possible.}
We call the function $h\in TC_\rho, \;\;\rho>1/2$, {locally $\rho$-balanced} according to the corresponding property of the set $M_{\widehat h}=\{\theta:\;\widehat h(\theta)=\displaystyle\max_{[0,2\pi]}\widehat h(t)\}$.

Given $h\in TC_\rho$, where $\rho$ 
is an integer, we put
\begin{equation}
h^*(\theta):=\max_{j=0,1,\ldots, \rho-1} h\left(\frac {\theta+2\pi j}{\rho}\right),
\end{equation}
and observe that the function $h^*$ is $1$-trigonometrically convex. By $I_{h^*}$ we denote the planar convex compact set, having supporting function $h^*$. By $R_{h^*}$ we denote the circumradius of $I_{h^*}.$

In the following theorem we collect some results from \cite{PartI}
that we will use.

\begin{theorem} \label{PartI}
{\upshape \cite[Theorem~1.1, Theorem~1.3, Corollary~2.1]{PartI}}
Suppose that $\Delta$ is a $\rho$-regular measure and $h=h_\Delta$ is the associated $\rho$-trigonometrically convex function.
   Then \begin{enumerate}
    \item if $\rho$ is non-integer, then
        $$\sigma_Z(\Delta)=\max_{t\in[0;2\pi]}h(t);$$
        \item  if $\rho$ is integer, then
        $$\displaystyle
            \sigma_Z(\Delta)=\min_{k\in T_\rho}\max_{t\in[0;2\pi]}[h(t)+k(t)]=\max_{t\in[0,2\pi]}\bigl(\widehat h(t)\bigr)=R_{h^*};
        $$
       \item for $\rho>0$ $$\displaystyle\sigma_U(\Delta)=\inf_{k\in TC_\rho}\max_{t\in[0,2\pi]}\bigl(h(t)+k(t)\bigr);$$
            
             \item for $\rho\le 1/2$ and for $\rho=1$,
    $$\sigma_U(\Delta)=\sigma_Z(\Delta);$$
    \item  for $\rho >1/2$
  the equality    $\sigma_U(\Delta)=\sigma_Z(\Delta)$ holds if and only if  the function $\widehat h$ is locally $\rho$-balanced.
     \end{enumerate}

\end{theorem}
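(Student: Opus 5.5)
The statement is a compilation of results from \cite{PartI}. I would prove its items in the order (1)–(2), then (3), then (4)–(5), working throughout with indicators of functions of completely regular growth (CRG) in the sense of Levin--Pfluger.

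\textbf{Items (1) and (2).} The approach for $\sigma_Z$ is as follows.
\begin{itemize}
\item By \cite[Ch.~II--III]{Levin}, a regular $\Lambda\in AD(\Delta,\rho)$ is the zero set of a canonical product $\Pi_\Lambda$ of CRG. Its indicator is $h_\Delta$ when $\rho\notin\mathbb N$. When $\rho\in\mathbb N$, the Lindel\"of condition is what makes this work, and the indicator is $h_\Delta$ modulo $T_\rho$.
\item By Hadamard, every $f\in\mathcal E_\rho$ with $f^{-1}\{0\}=\Lambda$ has the form $f=e^{P}\Pi_\Lambda$ with $\deg P\le\rho$.
\item For $\rho\notin\mathbb N$ the factor $e^P$ does not change the indicator. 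Since the type equals $\max h_f$, this gives (1).
\item For $\rho\in\mathbb N$ the factor $e^P$ adds an arbitrary element of $T_\rho$, so $\sigma_Z=\min_{k\in T_\rho}\max(h+k)$.
\item The minimizer in (2) is characterized by a Kolmogorov-type criterion for best uniform approximation by the two-dimensional space $T_\rho$. That criterion is exactly $0\in{\rm conv}\{e^{i\rho t}:t\in M\}$, i.e.\ $\rho$-balancedness, which yields $\max\widehat h$.
\item For the identity with $R_{h^*}$, substitute $\theta=\rho t$. Then $\max_t\bigl(h+A\cos\rho t+B\sin\rho t\bigr)=\max_\theta\bigl(h^*(\theta)+A\cos\theta+B\sin\theta\bigr)$.
\item The right-hand side is the maximum of the support function of a translate of $I_{h^*}$, which is the largest distance from the origin to that translate. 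Minimizing over translations gives the circumradius.
\end{itemize}

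\textbf{Item (3).} I would prove the two inequalities separately.
\begin{itemize}
\item \emph{Upper bound.} Fix $k\in TC_\rho$. Take a set $\Gamma$ with angular density $\Delta_k$; if $\rho\in\mathbb N$, enlarge it by the Luntz Theorem so that it becomes regular. Then the canonical product of $\Lambda\cup\Gamma$ has indicator $h+k$ (modulo $T_\rho$), vanishes on $\Lambda$, and has type $\max(h+k)$.
\item \emph{Lower bound.} Let $f\in\mathcal E_{\rho,\sigma}\setminus\{0\}$ vanish on $\Lambda$. Pass to an Azarin limit set: along $r_n\to\infty$, $\log|f(r_nz)|/r_n^\rho\to v$. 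Here $v$ is subharmonic, $v\le\sigma|z|^\rho$, and its Riesz measure dominates the homogeneous measure $d\Delta\,r^{\rho-1}dr$ of $\Lambda$.
\item Hence $v-r^\rho h(\theta)$ is subharmonic and bounded above by $(\sigma-h(\theta))r^\rho$.
\item Taking its growth indicator (the $\limsup$ of $w(re^{i\theta})/r^\rho$) produces $k\in TC_\rho$ with $h+k\le\sigma$.
\end{itemize}
I expect the lower bound to be the main obstacle. It requires care with limit sets and with the possibility $v\equiv-\infty$; the latter is excluded because $f\not\equiv0$ and one can normalize at a point.

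\textbf{Items (4) and (5).} These reduce, via (2)–(3), to the question of whether adding some $k\in TC_\rho\setminus T_\rho$ can lower $\max(\widehat h+k)$. That is possible precisely when some $k\in TC_\rho$ is negative on all of $M_{\widehat h}$, since a small multiple then lowers the maximum by compactness.
\begin{itemize}
\item \emph{Locally balanced $\Rightarrow$ no decrease.} Use property F: the negativity set of $k$ consists of intervals of length $\le\pi/\rho$, separated by gaps $>\pi/\rho$. A triple $\alpha,\beta,\gamma$ with the spacings \eqref{loc-bal} cannot lie in such a set, since $\alpha,\beta$ would have to share an interval and then $\gamma$ could fit neither in it nor in the next one. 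So $\sigma_U=\sigma_Z$.
\item \emph{Not locally balanced $\Rightarrow$ strict decrease.} Build an elementary $\rho$-trigonometrically convex $k$, for example the maximum of finitely many shifted pieces $-\eps\cos\rho(t-c)$ glued using property B. It is to be negative on neighbourhoods of the clusters of $M_{\widehat h}$, each of which spans less than $\pi/\rho$ with adequate gaps between clusters. This gives $\sigma_U<\sigma_Z$ and proves (5).
\item \emph{Item (4).} For $\rho\le1/2$, property F gives negativity intervals of length $\le\pi/\rho$ with gaps $>\pi/\rho\ge2\pi$, which is impossible unless there is at most one interval covering less than a period. Together with property C, this shows that $k$ cannot be negative on the maximum set of $\widehat h$; I would verify this via (5)'s criterion degenerating. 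For $\rho=1$, adding $k\in TC_1$ corresponds to the Minkowski sum $I_h+I_k$, whose circumradius is at least that of $I_h$. This gives (4).
\end{itemize}
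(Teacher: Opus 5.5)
The paper gives no proof of this theorem. It is quoted from Part~I, so there is no in-paper argument to follow, and your outline has to stand on its own. Most of it does.
\begin{itemize}
\item The separation (Kolmogorov-type) characterization of the optimal element of $h+T_\rho$ as the $\rho$-balanced one is correct.
\item So is the substitution $\theta=\rho t$, which reduces $\min_{T_\rho}\max$ to the circumradius of $I_{h^*}$.
\item The reduction of (4)--(5) to the existence of some $k\in TC_\rho$ negative on all of $M_{\widehat h}$ is correct, as is the Property~F argument ruling this out on a locally balanced triple.
\item The Minkowski-sum argument for $\rho=1$ is correct.
\end{itemize}
Your limit-set proof of the lower bound in (3) is also sound, and it differs from how the paper handles the analogous step in the proof of Theorem~\ref{LN}. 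There, $G/W_\Lambda$ is entire and Levin's theorem on the indicator of a product with a CRG factor is applied. That route is shorter but needs $W_\Lambda$ to be of CRG, whereas yours uses only the angular density. This difference matters for the gap below.

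\textbf{The gap.} In (1)--(2) you assert that for $\rho\in\mathbb N$ a regular $\Lambda$ has a canonical product $\Pi_\Lambda$ of completely regular growth with indicator in $h_\Delta+T_\rho$, ``the Lindel\"of condition being what makes this work''. Boundedness of $s(r)$ gives finite type, not CRG; Levin's theorem needs $\lim_{r\to\infty}s(r)$ to exist.

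To see what goes wrong, split the product at $|\lambda_k|=Kr$. Off a $C_0$-set one gets
$\log|\Pi_\Lambda(re^{i\theta})|=r^\rho\bigl(H(\theta)+\tfrac1\rho\operatorname{Re}(e^{i\rho\theta}s(r))\bigr)+o(r^\rho)$,
with $H\in h_\Delta+T_\rho$; the zero $\rho$-th moment of $\Delta$ gives $s(Kr)-s(r)\to0$. So the indicator acquires the term $\tfrac1\rho\limsup_{r\to\infty}\operatorname{Re}(e^{i\rho\theta}s(r))$, which lies in $T_\rho$ only when $s(r)$ converges. Such $\Lambda$ with bounded but oscillating $s(r)$ are easy to produce: add a zero-density set placed in long blocks alternately on the rays $\arg z=0$ and $\arg z=\pi/\rho$. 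The paper makes the same CRG claim in the proof of Theorem~\ref{LN}.

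Consequently your lower bound $\sigma_Z\ge\min_{k\in T_\rho}\max(h+k)$ is not proved for every $\Lambda\in AD(\Delta,\rho)$. The fix is your own tool from (3).
\begin{itemize}
\item Let $f\in\mathcal E_{\rho,\sigma}$ have zero set exactly $\Lambda$. A limit function $v$ then has Riesz measure \emph{equal} to that of $r^\rho h(\theta)$.
\item Hence $v-r^\rho h(\theta)$ is harmonic and at most $Cr^\rho$, so it is $\operatorname{Re}P$ with $\deg P\le\lfloor\rho\rfloor$.
\item Letting $r\to\infty$ gives $h(\theta)+\operatorname{Re}(ce^{i\rho\theta})\le\sigma$, with $c=0$ if $\rho\notin\mathbb N$.
\end{itemize}
For the upper bounds in (2) and (3), use a set with convergent sums. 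Apply the Luntz theorem to $\Lambda\cup\Gamma$ rather than to $\Gamma$ alone, or simply choose a good $\Lambda$, which the $\exists\Lambda$ in the definitions allows.

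\textbf{Smaller point in (5).} The gluing must use truncated pieces. A full $-\eps\cos\rho(t-c)$ is not $2\pi$-periodic for $\rho\notin\mathbb N$. Its positive lobes, of length $\pi/\rho$, can also reach a neighbouring cluster when the gap is only slightly larger than $\pi/\rho$, making the maximum nonnegative there. Instead, take $\tau_{[\alpha,A,\beta,B]}$ from \eqref{tau} on a slightly widened hull of each cluster, and set the function to $0$ off the union of the domains, exactly as in Lemma~\ref{lemma2} and Lemma~\ref{loc-balK_j}.

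The decomposition of a non-locally-balanced $M_{\widehat h}$ into clusters of length $<\pi/\rho$ with gaps $>\pi/\rho$ also needs its short proof. Take chain-components with steps $\le\pi/\rho$. A component of diameter $\ge\pi/\rho$ yields a balanced (possibly degenerate) triple at the first chain step leaving $(\alpha-\pi/\rho,\alpha+\pi/\rho)$.

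\textbf{Smaller point in (4), $\rho\le1/2$.} Your intermediate claim (``at most one interval'') is off. Property~F with the wrap-around gap excludes even a single negativity interval, because its distance to its own $2\pi$-translate is $<2\pi\le\pi/\rho$. So every $k\in TC_\rho$ is nonnegative, and $\sigma_U=\max h=\sigma_Z$ follows at once from (3).
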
}

\begin{rem}
Let us  note that the critical uniqueness type $\sigma_u$ can be considered also as a function of the corresponding $\rho$-trigonometrically convex function:
$\widehat\sigma_U(h_\Delta):=\sigma_U(\Delta)$. Then it  is continuous with
respect to the uniform norm. Indeed, let $h_1, h_2\in TC_\rho$, and let $k_1\in TC_\rho$, $k_2\in TC_\rho$ be such that
$$\max_{t\in\mathbb R}|h_j(t)+k_j(t)|=\widehat\sigma_U(h_j),\;\;j=1,2,$$
(note that for $h\in TC_\rho \max_{t\in\mathbb R}h(t)=\max_{t\in\mathbb R}|h(t)|)$,
Then
\begin{align*}
\widehat\sigma_U(h_1)&\le\max_{t\in\mathbb R}|h_1(t)+k_2(t)|
\\&\le  \max_{t\in\mathbb R}(|h_1(t)-h_2(t)|+|h_2(t)+k_2(t)|)\le \|h_1-h_2\|_\infty+\widehat\sigma_U(h_2),
\end{align*}
 analogously
$$\widehat\sigma_U(h_2)\le \|h_1-h_2\|_\infty+\widehat\sigma_U(h_1).$$
Hence, 
$$|\widehat\sigma_U(h_1)-\widehat\sigma_U(h_2))\le  \|h_1-h_2\|_\infty.$$
\end{rem}

\section{Existence of a discrete type-minimizing measure}\label{sec1}

\subsection{Reformulation of Theorem \ref{T1}.}\label{widely spaced}

Let $\Delta^*$ be a type-minimizing measure \eqref{type-min-Delta} for a regular measure $\Delta$. The corresponding $\rho$-trigonometrically convex function $h^*$ (or any of them when $\rho\in\mathbb N$) will also be referred to as type-minimizing.

We will say that a finite set $\mathcal A=\{\alpha_1,\dots,\alpha_L\}\subset\mathbb R/2\pi\mathbb Z$
is widely spaced if
\[
\min_{i\ne j} d_{2\pi}(\alpha_i,\alpha_j)>\frac{\pi}{\rho}.
\] In particular, $L< 2\rho.$

The following proposition is simply a reformulation of Theorem \ref{T1} into the language of $\rho$-trigonometrically convex functions (see also \cite{PartI}). It will be used later in Section \ref{Sect.upper}.
\begin{proposition}\label{Cor1}
    For every $h\in TC_\rho$ there exists $k^*\in TC_\rho$ such that
    \begin{itemize} \item  $k^*$ is elementary $\rho$-trigonometrically convex with widely spaced set of singular points;
        \item $\displaystyle \max_{t\in[0,2\pi]} (h(t)+k^*(t))=\min_{k\in TC_\rho}\max_{t\in[0,2\pi]}\left(h(t)+k(t)\right)=\sigma_U(\Delta_h),$\\
        where $\Delta_h$ is defined by \eqref{Delta}.
    \end{itemize}
\end{proposition}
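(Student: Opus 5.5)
The plan is to split the statement into two parts. First, the infimum in Theorem \ref{PartI}(3) is attained. Second, any minimizer can be pushed down to an elementary function with widely spaced singular points without increasing $\max(h+k)$.

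\emph{Attainment.} Put $\sigma:=\inf_{k\in TC_\rho}\max_t(h+k)$ and take a minimizing sequence $k_n\in TC_\rho$. Since $h+k_n\in TC_\rho$, property C gives $\|h+k_n\|_\infty=\max(h+k_n)\le\sigma+1$, so the $k_n$ are uniformly bounded. A uniformly bounded family in $TC_\rho$ has uniformly bounded one-sided derivatives: integrate \eqref{rho_tc} over a window of length $<\pi/\rho$. The family is therefore equi-Lipschitz, and Arzel\`a--Ascoli gives a uniformly convergent subsequence $k_n\to k_0$. The inequality \eqref{TC} passes to uniform limits, so $k_0\in TC_\rho$ and $\max(h+k_0)=\sigma$.

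\emph{Reduction to a lower bound.} Any $k\in TC_\rho$ with $k\le k_0$ still satisfies $\max(h+k)\le\sigma$, and by minimality $\max(h+k)\ge\sigma$. So it suffices to prove the following lemma: for every $g\in TC_\rho$ there is an elementary $k^*\in TC_\rho$ with widely spaced singular points and $k^*\le g$. If $g\ge0$, take $k^*\equiv0$; here $L=0$ and \eqref{TC} holds trivially. Otherwise, by property F, $E_g=\bigcup_{j=1}^l(\alpha_j,\beta_j)$ with $\beta_j-\alpha_j\le\pi/\rho$ and the gaps exceeding $\pi/\rho$. The candidate singular points are $s_j\in(\alpha_j,\beta_j)$ chosen at the minimum of $g$ on the $j$-th component. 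These are automatically widely spaced, since two points in different components are separated by a full gap of length $>\pi/\rho$. This already yields $l<2\rho$.

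\emph{Constructing $k^*$ (main obstacle).} On each arc $[s_j,s_{j+1}]$ I would take a single $\rho$-sinusoid $A_j\cos\rho(t-\theta_j)$ through the two points $(s_j,g(s_j))$ and $(s_{j+1},g(s_{j+1}))$, both values being negative. Two things must then be checked. First, the glued function lies below $g$. Second, at each $s_j$ the one-sided derivatives satisfy $k^{*\prime}_+(s_j)>k^{*\prime}_-(s_j)$, which by \eqref{h+_} makes $k^*$ trigonometrically convex.

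Near $s_j$, both conditions should follow from the fact that $g$ lies above its tangent sinusoids on intervals of length $<\pi/\rho$ around each point, combined with $g'_-(s_j)\le0\le g'_+(s_j)$ at a minimum. The difficulty is that the arcs have length greater than $\pi/\rho$ and possibly larger than $2\pi/\rho$ when $\rho$ is large relative to $l$. On such arcs a sinusoid through two negative values may be forced to oscillate and could overshoot $g$, or the gluing may produce concave kinks.

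What I would try first is to exploit the freedom in choosing $k_0$ and $s_j$. For instance, among all minimizers $k_0$, choose one minimal in a suitable sense, such as one maximizing $\int k_0$ or minimizing the number of components of $E_{k_0}$. Then argue by perturbation that on each long arc $g$ itself must already coincide with a single sinusoid. If it did not, one could lower $g$ there while keeping it in $TC_\rho$, so that $\Delta_{k_0}$ would concentrate at the points $s_j$. In that case $k^*=k_0$ itself is elementary, and the remaining work is verifying that the perturbation stays in $TC_\rho$. For $\rho\in\mathbb N$ this must be done up to equivalence, using the balanced modification $\widehat{\ }$ from \eqref{lb-m}.
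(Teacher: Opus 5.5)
\textbf{Verdict.} Your steps 1--2 are sound, but the proof has a genuine gap: the minorant lemma, which carries the whole content of the proposition, is not proved, and the construction you propose for it fails.

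\textbf{What works.} The Arzel\`a--Ascoli argument does give a minimizer $k_0\in TC_\rho$. Any $k\in TC_\rho$ with $k\le k_0$ is again a minimizer. This order (minimize first, then take a single minorant) is cleaner than the paper's: the paper takes minorants of near-minimizers and then must pass to a limit inside a finite-parameter family of elementary functions.

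\textbf{Why the construction fails.} What remains is the minorant lemma, i.e.\ the paper's Lemma~\ref{lemma4}. Your construction puts one singular point in each component of $E_g$ and one sinusoid on each arc between them. This fails because an elementary minorant may need singular points outside $E_g$. Take $\rho=2$, $0<\beta-\alpha<\pi/2$, $A,B>0$. Let $g=\tau_{[\alpha,A,\beta,B]}$ from \eqref{tau} on $[\alpha-\pi/2,\beta+\pi/2]$, and $g=0$ elsewhere. Then:
\begin{itemize}
\item $g\in TC_2$ and $E_g=(\alpha,\beta)$;
\item $g\equiv 0$ on an arc $J$ of length $\pi-(\beta-\alpha)>\pi/2$.
\end{itemize}
Suppose $k\le g$ is elementary with all its singular points $s_1<\dots<s_m$ in $(\alpha,\beta)$. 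On $(s_m,s_1+2\pi)\supset J$ (or everywhere, if $m=0$), $k$ is a single $2$-sinusoid. It is $\le 0$ on an interval longer than $\pi/2$, so it vanishes identically. Hence either $k\equiv 0\not\le g$, or $k'_-(s_1)=0<k'_+(s_1)$, which gives $k>0>g$ just to the right of $s_1$. In particular, your sinusoid through $(s_1,g(s_1))$ and $(s_1+2\pi,g(s_1))$ is positive somewhere on $J$.

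\textbf{Why the fallback does not repair it.} You specify no perturbation. Moreover, its key implication is false: that a minorant which is not a single sinusoid on each long arc can be lowered inside $TC_\rho$. The same $g$ is minimal in $TC_2$. Using Property~C and the one-sided tangent estimates from the proof of Lemma~\ref{lemma_h'}, one checks that any $k\in TC_2$ with $k\le g$ equals $g$. Yet $g$ is not a single sinusoid on the long arc, and two of its three singular points, $\alpha-\pi/2$ and $\beta+\pi/2$, lie outside $E_g$.

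\textbf{The missing idea: the paper's two-step construction.}
\begin{enumerate}
\item Anchor the minorant at the \emph{zeros} $\alpha_j,\beta_j$ of $g$, not at its minima. Put $A_j=-g'_+(\alpha_j)/\rho$ and $B_j=g'_-(\beta_j)/\rho$. The tangent estimates give $g(t)\ge A_j\sin\rho(\alpha_j-t)$ on $[\alpha_j-\pi/\rho,\beta_j]$, and $g(t)\ge B_j\sin\rho(t-\beta_j)$ on $[\alpha_j,\beta_j+\pi/\rho]$. Taking maxima of these humps, and $0$ elsewhere, gives an elementary minorant (Lemma~\ref{lemma2}). Its singular points need not be widely spaced.
\item Whenever two consecutive singular points $s<s'$ satisfy $s'-s\le\pi/\rho$, replace the function on $[s,s']$ by the maximum of the two neighbouring sinusoids continued inward (Lemma~\ref{lemma3}). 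This lowers the function and keeps it elementary and $\rho$-trigonometrically convex. It also strictly reduces the number of singular points, so the iteration terminates.
\end{enumerate}
With this lemma in place, your steps 1--2 complete the proof.
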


\subsection{Preparatory lemmas.}

\begin{lemma}\label{lemma_h'} Assume that $h\in TC_\rho,$ and $h(0)=0.$
Then for $|t|<\pi/\rho$ we have
$$\rho\cdot h(t)\ge \max(h'_-(0),h'_+(0))\cdot\sin\rho t.$$
\end{lemma}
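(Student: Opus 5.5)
The plan is to use the defining three-point inequality \eqref{TC} directly, taking one of the three points to be $0$ and letting a second point tend to $0$; this produces the one-sided derivatives. First I reduce the right-hand side. By property \textbf{A}, $h'_-(0)\le h'_+(0)$, so $\max(h'_-(0),h'_+(0))=h'_+(0)$. It therefore suffices to prove the two one-sided estimates
$$\rho h(t)\ge h'_+(0)\sin\rho t\quad (0<t<\pi/\rho),\qquad \rho h(t)\ge h'_-(0)\sin\rho t\quad (-\pi/\rho<t<0).$$
The second one implies the claimed bound for $t<0$. Indeed, there $\sin\rho t<0$, so $h'_-(0)\sin\rho t\ge h'_+(0)\sin\rho t$. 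The case $t=0$ is trivial.

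\emph{Case $0<t<\pi/\rho$.} Apply \eqref{TC} with $t_1=0$, $t_2=s\in(0,t)$ and $t_3=t$; this is admissible since $t_3-t_1=t<\pi/\rho$. Because $h(0)=0$, the inequality reduces to
$$h(s)\sin\rho t\le h(t)\sin\rho s.$$
Divide by $s>0$ and let $s\to0^+$. Since $h(s)/s\to h'_+(0)$ (the derivative exists by property \textbf{A}) and $\sin(\rho s)/s\to\rho$, this gives $h'_+(0)\sin\rho t\le\rho h(t)$.

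\emph{Case $-\pi/\rho<t<0$.} Apply \eqref{TC} with $t_1=t$, $t_2=-s\in(t,0)$ and $t_3=0$; again $t_3-t_1=|t|<\pi/\rho$. Using $h(0)=0$, the inequality reads
$$-h(t)\sin\rho s+h(-s)\sin\rho|t|\le0.$$
Dividing by $s$ and letting $s\to0^+$, we have $h(-s)/s\to-h'_-(0)$. This yields $-h'_-(0)\sin\rho|t|\le\rho h(t)$, which is exactly $\rho h(t)\ge h'_-(0)\sin\rho t$. Combining this with the reduction above finishes the argument.

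There is no real obstacle here. The only points needing care are the bookkeeping of signs in \eqref{TC} when the base point $0$ sits at the right end versus the left end of the triple, and the observation that the weaker bound with $h'_+(0)$ on the negative side follows from the sharper one with $h'_-(0)$.
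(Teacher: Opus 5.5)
Your proof is correct and follows the paper's approach: substitute a triple containing $0$ into \eqref{TC} and let the auxiliary point tend to $0$ to extract a one-sided derivative. The only difference is that the paper also places $0$ as the middle point of the triple, which gives both the $h'_+(0)$ and $h'_-(0)$ bounds directly. You use only the endpoint configuration on each side and close the gap with $h'_-(0)\le h'_+(0)$ from Property A, which is equally valid.
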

\begin{proof}
    Let $t\in(0,\pi/\rho)$. Consider $\delta\in(0,t)$ and substitute $t_1=0, t_2=\delta, t_3=t$ into inequality \eqref{TC}:
    $$h(\delta)\sin\rho t-h(t)\sin\rho\delta\le 0.$$
    Then
    $$h'_+(0)\cdot\sin\rho t -\rho\cdot h(t)=\lim_{\delta\to 0+}\frac1\delta\left(h(\delta)\sin \rho t-h(t)\sin\rho \delta\right)\le 0.$$
    Analogously, taking  $t_1=\delta\in(t-\pi/\rho,0), t_2=0, t_3=t$ we get
    $$-h(\delta)\sin\rho t+h(t)\sin\rho\delta\le 0,$$ and hence
    $$
    h'_-(0)\cdot\sin\rho t -\rho\cdot h(t)=\lim_{\delta\to 0-}\frac1\delta\left(h(\delta)\sin \rho t-h(t)\sin\rho \delta\right)\le 0.$$
    The case 
$t\in(-\pi/\rho,0)$ is treated in the same way. 
\end{proof}

\begin{lemma}\label{lemma2}
    For every $h\in TC_\rho$ there exists an elementary $\rho$-trigonometrically convex function   $\tau(z)$, such that 
    $$\tau(z)\le h(z).$$
\end{lemma}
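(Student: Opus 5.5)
The plan is to build $\tau$ as the lower envelope of finitely many ``tangent'' $\rho$-trigonometric arcs to $h$. The natural tool is Lemma~\ref{lemma_h'}, suitably shifted: it says that a $\rho$-trigonometrically convex function lies above its tangent sinusoid on an interval of length $2\pi/\rho$ around the point of contact.

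\textbf{Step 1 (local minorants).} Fix $\theta_0\in\mathbb R$. Put $a=h(\theta_0)$ and let $b$ be the right derivative $h'_+(\theta_0)$. Let
\[
g_{\theta_0}(t)=a\cos\rho(t-\theta_0)+\frac{b}{\rho}\sin\rho(t-\theta_0).
\]
Apply Lemma~\ref{lemma_h'} to $\tilde h(t)=h(t+\theta_0)-g_{\theta_0}(t+\theta_0)$. This is legitimate because subtracting a $\rho$-sinusoid preserves \eqref{TC}: the expression in \eqref{TC} vanishes identically on sinusoids. Moreover $\tilde h(0)=0$ and $\tilde h'_+(0)=0$. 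The lemma then gives $h\ge g_{\theta_0}$ on $(\theta_0-\pi/\rho,\theta_0+\pi/\rho)$, and by continuity also on the closed interval.

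\textbf{Step 2 (finite cover).} Since $h$ is $2\pi$-periodic, choose points $\theta_1<\dots<\theta_N$ in $[0,2\pi)$, with $N$ large, so that consecutive points (cyclically) are at distance less than $\pi/(2\rho)$. On each arc $[\theta_j,\theta_{j+1}]$ consider the two arcs $g_{\theta_j}$ and $g_{\theta_{j+1}}$. Both are $\le h$ there, because the arc lies within $\pi/\rho$ of both centres.

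Define $\tau=\max(g_{\theta_j},g_{\theta_{j+1}})$ on $[\theta_j,\theta_{j+1}]$. This is a maximum, not a minimum: the maximum of two sinusoids is $\rho$-trigonometrically convex (property~B, locally), and it is still $\le h$.

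The difficulty is gluing at the nodes $\theta_j$. There the adjacent definitions give $\max(g_{\theta_{j-1}}(\theta_j),h(\theta_j))$ and $\max(h(\theta_j),g_{\theta_{j+1}}(\theta_j))$, and both equal $h(\theta_j)$, since $g_{\theta_{j\pm1}}(\theta_j)\le h(\theta_j)$. So $\tau$ is continuous and $\tau(\theta_j)=h(\theta_j)$.

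\textbf{Step 3 (convexity at nodes).} To get $\tau\in TC_\rho$ via the criterion \eqref{h+_}, I need $\tau'_+\ge\tau'_-$ at every junction. At a node $\theta_j$, the function $\tau$ equals $g_{\theta_j}$ on a one-sided neighbourhood on each side, unless $g_{\theta_{j\pm1}}$ also touches $h$ there. Hence $\tau$ is smooth, or has a jump of the correct sign, since $\tau\le h$ with equality at $\theta_j$ forces $\tau'_-\le h'_-\le h'_+\le\tau'_+$.

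At interior crossing points of two sinusoids, the maximum automatically has an upward kink. Removing points with no actual derivative jump, $\tau$ is piecewise $\rho$-trigonometric with finitely many singular points, because two distinct $\rho$-sinusoids cross at most once in a window shorter than $\pi/\rho$. So $\tau$ is elementary $\rho$-trigonometrically convex and $\tau\le h$.

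\textbf{Expected obstacle.} The main technical point is Step~3: verifying the derivative inequality at every junction and checking that the resulting function is genuinely $2\pi$-periodic and globally in $TC_\rho$. Trigonometric convexity is only a local condition (windows of length $<\pi/\rho$), so the local checks suffice through \eqref{rho_tc}.

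I would handle degenerate cases, where $\tau$ has no singular points, by adding one artificial node. One option is to replace $\tau$ by $\max(\tau,g)$ for a slightly lowered tangent sinusoid $g$. Alternatively, a pure $\rho$-sinusoid (for integer $\rho$, an element of $T_\rho$) can be allowed as the degenerate elementary case.
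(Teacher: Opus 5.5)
Your construction is correct and takes a genuinely different route from the paper's, but the reason you give in Step~3 for the sign of the kink at a node is backwards. From $\tau\le h$ and $\tau(\theta_j)=h(\theta_j)$ you only learn that $\tau-h$ has a local maximum at $\theta_j$. That gives $\tau'_-(\theta_j)\ge h'_-(\theta_j)$ and $\tau'_+(\theta_j)\le h'_+(\theta_j)$, which is perfectly compatible with a concave corner. Your chain $\tau'_-\le h'_-$ is in fact false in the generic case where $h$ has a corner at $\theta_j$: there $\tau=g_{\theta_j}$ near $\theta_j$, so $\tau'_-(\theta_j)=h'_+(\theta_j)>h'_-(\theta_j)$.

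The correct argument uses the fact that $g_{\theta_j}$ enters the maximum on both arcs adjacent to $\theta_j$. Hence $\tau\ge g_{\theta_j}$ on a neighbourhood of $\theta_j$, with equality at $\theta_j$. So $\tau-g_{\theta_j}$ has a local minimum there, and $\tau'_-(\theta_j)\le h'_+(\theta_j)\le\tau'_+(\theta_j)$. With this repair the rest of your argument goes through: at most one transversal crossing per arc, upward kinks at crossings, periodicity, and the criterion \eqref{h+_}.

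In the degenerate case, drop your first option. If $h\equiv 0$ (more generally, $h\in T_\rho$ for integer $\rho$), Property~C forces every $\rho$-trigonometrically convex $\tau\le h$ to coincide with $h$, so no kink can be created. You must instead admit an elementary function without singular points, as the paper does with $\tau=0$.

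The paper's proof works only near the negativity set. It first disposes of $h\ge 0$ with $\tau=0$. Otherwise it uses Property~F to split $E_h=\{h<0\}$ into arcs $(\alpha_j,\beta_j)$ of length at most $\pi/\rho$, separated by gaps longer than $\pi/\rho$. Because $h(\alpha_j)=h(\beta_j)=0$, the supporting sinusoids at these zeros are $A_j\sin\rho(\alpha_j-t)$ and $B_j\sin\rho(t-\beta_j)$. These vanish at $\alpha_j-\pi/\rho$ and $\beta_j+\pi/\rho$, so they can be glued to the zero function with upward kinks, and $\tau=0$ away from these windows.

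Both proofs rest on the same two facts. A $\rho$-trigonometrically convex function dominates its supporting sinusoids on windows of length $2\pi/\rho$ (Lemma~\ref{lemma_h'}), and maxima of sinusoids create only upward kinks. Your mesh needs neither Property~F nor the zeros of $h$, and it proves more: $\tau$ touches $h$ at every node, so $\tau\to h$ uniformly as the mesh is refined. The price is a number of singular points that grows with the mesh, whereas the paper's $\tau$ is singular only near the finitely many components of $E_h$. This difference is immaterial here, since Lemma~\ref{lemma3} removes superfluous singular points afterwards.
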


\begin{proof}
{
If $h(t)\ge0$ for all $t\in\mathbb R$, then we can take $\tau=0$.
Assume now that
\[
E_h:=\{t\in\mathbb R:\ h(t)<0\}\ne\varnothing .
\]
By Property F in Subsection \ref{rho-prelim-prop}, and by the periodicity of
$h$, the connected components of $E_h$ can be enumerated as
\[
E_h=\bigcup_{j\in\mathbb Z}(\alpha_j,\beta_j),
\]
where, for some $l\in\mathbb Z$,
\[
\alpha_{j+l}=\alpha_j+2\pi,\qquad
\beta_{j+l}=\beta_j+2\pi,
\]
and
\[
h(\alpha_j)=h(\beta_j)=0,\qquad
\beta_j-\alpha_j\le \frac{\pi}{\rho},\qquad
\alpha_{j+1}-\beta_j>\frac{\pi}{\rho}.
\]
Since $h<0$ on $(\alpha_j,\beta_j)$, we have
\[
h'_+(\alpha_j)\le0,\qquad h'_-(\beta_j)\ge0.
\]

Given $\alpha,\beta$ such that $0<\beta-\alpha\le \pi/\rho$, and
non-negative parameters $A$ and $B$, put
\[
I_{\alpha,\beta}:=
\left[\alpha-\frac{\pi}{\rho},\beta+\frac{\pi}{\rho}\right].
\]
On this interval we define (see Fig.\ref{Picture1})
\begin{equation}
\label{tau}
\tau_{[\alpha,A,\beta,B]}(t)
=\begin{cases}
\displaystyle A\sin\rho(\alpha-t),
&
t\in\left[\alpha-\frac{\pi}{\rho},\alpha\right],
\\[5pt]
\displaystyle
\max\left\{
A\sin\rho(\alpha-t),
B\sin\rho(t-\beta)
\right\},
&
t\in[\alpha,\beta],
\\[5pt]
\displaystyle B\sin\rho(t-\beta),
&
t\in\left[\beta,\beta+\frac{\pi}{\rho}\right].
\end{cases}
\end{equation}
\begin{figure}[H]
\begin{center}

{
\begin{tikzpicture}[declare function={
    g(\x)=\x<pi/4 ? 0 :
    (\x<0.98279372/2+3*pi/4 ? -0.5*cos(2*(\x) r) : 
          (\x< 3*pi/2 ? 0.75*sin(2*(\x) r):
      (\x< 2*pi ?0: 
      (\x<  9*pi/4 ? 0: 
      0 r)))));}]
    \begin{axis}[
      grid=both, 
      grid style={line width=0.1pt, draw=gray!75},
      axis lines=center,
      axis line style={black},
 unit vector ratio = 1 2,   
     xmin=-0.5, xmax=26*pi/12,
     ymin=-0.5, ymax=1.2,
      xtick=\empty,
      ytick=\empty,
      every axis plot/.append style={line width=1pt, color=black},
          ]
        \addplot[ thick, samples at={0,0.05,...,8}] {g(x)}; 
\fill(pi/4,0) circle (2pt) node[below] {\tiny$\displaystyle \alpha\!-\!\frac\pi{\rho}$} ;
\fill(3*pi/4,0) circle (2pt) node[above] {\tiny$\displaystyle \;\;\alpha$} ;

\fill(pi,0) circle (2pt) node[above] {\tiny$\displaystyle \!\beta$} ; 
\fill(3*pi/2,0) circle (2pt) node[below] {\tiny$\displaystyle \beta\!+\!\frac{\pi}{\rho}$}; 
\fill(0,0.5) circle (1pt) node[left] {\tiny$A$};
\fill(0,0.75) circle (1pt) node[left] {\tiny$B$};

\draw [dashed](0,0.5) - - (pi/2,0.5) ;
\draw [dashed](0,0.75) - - (5*pi/4,0.75) ;
   \end{axis}
        \end{tikzpicture}}

\end{center}

   \caption{\small Graph of  $\tau_{[\alpha, A, \beta, B]}(t)$. 
   }\label{Picture1}
   
\end{figure}

For every $j\in\mathbb Z$, put
\[
A_j:=-\frac{h'_+(\alpha_j)}{\rho}\ge0,
\qquad
B_j:=\frac{h'_-(\beta_j)}{\rho}\ge0,
\]
and
\[
I_j:=I_{\alpha_j,\beta_j}
=
\left[\alpha_j-\frac{\pi}{\rho},
\beta_j+\frac{\pi}{\rho}\right],
\qquad
\tau_j(t):=\tau_{[\alpha_j,A_j,\beta_j,B_j]}(t),
\quad t\in I_j.
\]
Since
\[
\beta_j-\alpha_j\le \frac{\pi}{\rho},
\qquad
\alpha_{j+1}-\beta_j>\frac{\pi}{\rho},
\]
we have
\[
I_j\cap I_m=\varnothing
\qquad\text{whenever } |j-m|\ge2.
\]
Thus every point of $\mathbb R$ belongs to at most two intervals $I_j$.

We define $\tau$ on the whole real line by
\[
\tau(t):=
\begin{cases}
\displaystyle \max\{\tau_j(t): j\in\mathbb Z,\ t\in I_j\},
&
t\in\bigcup_{j\in\mathbb Z} I_j,
\\[5pt]
0,
&
t\notin\bigcup_{j\in\mathbb Z} I_j.
\end{cases}
\]
The maximum is therefore taken over a non-empty finite set of indices.
Moreover, since the family of intervals $I_j$ and the functions $\tau_j$ are
$2\pi$-periodic in $j$, we have
\[
\tau(t+2\pi)=\tau(t).
\]
Thus $\tau$ is a well-defined $2\pi$-periodic function on $\mathbb R$.
(See Fig.\ref{tauP})

\begin{figure}[H]
\begin{center}
{
\begin{tikzpicture}[declare function={
    g(\x)=\x<pi/4 ? 0 :
    (\x<0.98279372/2+3*pi/4 ? -0.5*cos(2*(\x) r) : 
          (\x< 4.1 ? 0.75*sin(2*(\x) r):
          (\x<5.7 ? -2*cos(2*(\x) r):
      (\x< 5*pi/2? sin(2*(\x) r): 
      (\x<  3*pi ? 0: 
      (\x<  4.5*pi ? 0.5*sin(2*(\x) r):
            0 r)))))));}]
    \begin{axis}[
      grid=both, 
      grid style={line width=0.1pt, draw=gray!75},
      axis lines=center,
      axis line style={black},
 unit vector ratio = 1 2,   
     xmin=-0.5, xmax=5*pi+0.1,
     ymin=-1, ymax=2,
      xtick=\empty,
      ytick=\empty,
      every axis plot/.append style={line width=1pt, color=black},
          ]
        \addplot[ thick, samples at={0,0.05,...,5*pi}] {g(x)}; 

\fill(3*pi/4,0) circle (2pt) node[below] {\tiny$\displaystyle  \!\!\!\alpha_1$} ;
\fill(pi,0) circle (2pt) node[below] {\tiny$\displaystyle \;\;\;\beta_1$} ; 
\fill(7*pi/4,0) circle (2pt) node[below] {\tiny$\displaystyle \!\!\!\!\!\!\!\alpha_2$};
\fill(2*pi,0) circle (2pt) node[below] {\tiny$\displaystyle \;\;\;\beta_2$};
\fill(4*pi,0) circle (2pt) node[below] {\tiny$\displaystyle \;\;\;\beta_3$};
\fill(3.5*pi,0) circle (2pt) node[below] {\tiny$\displaystyle \!\!\!\!\alpha_3$};


   \end{axis}
        \end{tikzpicture}}

\end{center}

   \caption{\small Graph of  $\tau(t)$. 
   }\label{tauP}
   
\end{figure}

Let us show that $\tau\le h$. By the one-sided estimates obtained in the proof
of Lemma \ref{lemma_h'}, applied after shifting the points $\alpha_j$ and
$\beta_j$ to the origin, we have
\[
A_j\sin\rho(\alpha_j-t)\le h(t),
\qquad
t\in\left[\alpha_j-\frac{\pi}{\rho},\beta_j\right],
\]
and
\[
B_j\sin\rho(t-\beta_j)\le h(t),
\qquad
t\in\left[\alpha_j,\beta_j+\frac{\pi}{\rho}\right].
\]
Hence
\[
\tau_j(t)\le h(t),\qquad t\in I_j.
\]
Therefore, if $t\in\bigcup_{j\in\mathbb Z}I_j$, then every function involved
in the maximum defining $\tau(t)$ is bounded above by $h(t)$, and so
\[
\tau(t)\le h(t).
\]
If $t\notin\bigcup_{j\in\mathbb Z}I_j$, then $\tau(t)=0$. In this case
$t\notin E_h$, and hence $h(t)\ge0$. Thus again $\tau(t)\le h(t)$.

It remains to note that $\tau$ is piecewise $\rho$-trigonometric and has only
finitely many singular points modulo $2\pi$. At each singular point, the right derivative is not smaller than the left
derivative. Hence, by the derivative characterization \eqref{rho_tc}, the
function $\tau$ is an elementary $\rho$-trigonometrically convex function.

This proves the lemma.
}

\end{proof}

The next lemma allows us to "erase" extra points of non-smoothness of an elementary $\rho$-trigonometrically convex function. Recall, that such  points are called singular.

\begin{lemma}\label{lemma3}
    Let $\tau$ be an elementary   $\rho$-trigonometrically convex function with a finite set $S$ of singular points. If there are $\alpha,\beta\in S$ such that $0<\beta-\alpha\le\pi/\rho,$ then there exists an elementary   $\rho$-trigonometrically convex function $\kappa$ such that
    \begin{enumerate}
        \item $\kappa(t)=\tau(t)$ for $t\notin [\alpha,\beta]$;
        \item $\kappa(t)\le\tau(t)$ for $t\in [\alpha,\beta]$;
        \item for $\eps>0$ small enough, the function $\kappa$ has at most one singular point on the interval $(\alpha-\eps,\beta+\eps).$ 
    \end{enumerate}
   
\end{lemma}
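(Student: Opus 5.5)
The plan is to replace $\tau$ on $[\alpha,\beta]$ by the two $\rho$-trigonometric pieces of $\tau$ that meet the interval from outside, each continued into $[\alpha,\beta]$, and to take their maximum there. Let $s_L(t)=A\cos\rho(t-\theta)$ be the $\rho$-trigonometric function that coincides with $\tau$ on a small left neighbourhood $[\alpha-\eps,\alpha]$. Then $s_L(\alpha)=\tau(\alpha)$ and $s_L'(\alpha)=\tau'_-(\alpha)$. Similarly, let $s_R$ be the $\rho$-trigonometric function that coincides with $\tau$ on $[\beta,\beta+\eps]$, so $s_R(\beta)=\tau(\beta)$ and $s_R'(\beta)=\tau'_+(\beta)$. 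Define
\[
\kappa(t)=\max\{s_L(t),s_R(t)\},\quad t\in[\alpha,\beta],\qquad \kappa(t)=\tau(t),\quad t\notin[\alpha,\beta].
\]
Property (1) then holds by construction.

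The first key step is the support inequality $s_L\le\tau$ and $s_R\le\tau$ on $[\alpha,\beta]$, which gives property (2). A $\rho$-trigonometric function turns \eqref{TC} into an equality, so $g:=\tau-s_L$ is again $\rho$-trigonometrically convex. Moreover $g(\alpha)=0$ and $g'_-(\alpha)=0$. Lemma \ref{lemma_h'}, applied after shifting $\alpha$ to the origin, gives $\rho\, g(t)\ge \max(g'_-(\alpha),g'_+(\alpha))\sin\rho(t-\alpha)$. Here $g'_+(\alpha)\ge g'_-(\alpha)=0$ by Property A, and $\sin\rho(t-\alpha)\ge 0$ for $t\in(\alpha,\alpha+\pi/\rho)$. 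Hence $g\ge0$ on $[\alpha,\alpha+\pi/\rho)$. The endpoint $t=\alpha+\pi/\rho$, relevant when $\beta-\alpha=\pi/\rho$, follows by continuity. The same argument, with $\beta$ and right derivatives, gives $s_R\le\tau$ on $[\beta-\pi/\rho,\beta]$.

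Next, continuity. At $\alpha$ we have $\kappa(\alpha)=\max\{\tau(\alpha),s_R(\alpha)\}=\tau(\alpha)$, because $s_R(\alpha)\le\tau(\alpha)$; the same holds at $\beta$. For $\rho$-trigonometric convexity of $\kappa$, it suffices by \eqref{h+_} to check the corners. Near $\alpha$ from the left, $\kappa$ equals $s_L$, which continues into $[\alpha,\beta]$. So if $\kappa=s_L$ just to the right of $\alpha$, the point $\alpha$ is not singular. If instead $\kappa=s_R>s_L$ just right of $\alpha$, then $s_R(\alpha)=s_L(\alpha)$ and $s_R'(\alpha)\ge s_L'(\alpha)$, so the derivative jump is nonnegative. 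An interior crossing of the two branches of a maximum of smooth functions always has a nonnegative jump. The point $\beta$ is treated symmetrically. Hence $\kappa$ is elementary $\rho$-trigonometrically convex.

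The main obstacle is property (3), counting the singular points in $(\alpha-\eps,\beta+\eps)$. Since $s_L-s_R=C\sin\rho(t-t_0)$ is itself $\rho$-trigonometric, either it vanishes identically (then $\kappa$ is a single sinusoid near $[\alpha,\beta]$ and there are no singular points), or its zeros are spaced exactly $\pi/\rho$ apart. In the latter case, on $[\alpha,\beta]$ with $\beta-\alpha\le\pi/\rho$ it changes sign at most once in the open interval $(\alpha,\beta)$. The degenerate case is zeros exactly at both $\alpha$ and $\beta$, with $\beta-\alpha=\pi/\rho$; then $s_L-s_R$ has constant sign inside, so $\kappa$ is a single branch on all of $[\alpha,\beta]$. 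Combining this with the corner analysis above, the graph of $\kappa$ on $(\alpha-\eps,\beta+\eps)$ runs along $s_L$ and then along $s_R$, with the switch occurring at exactly one point, possibly $\alpha$ or $\beta$. Therefore there is at most one singular point there. I expect the careful bookkeeping of these endpoint and degenerate cases to be the only delicate part.
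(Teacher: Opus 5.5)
Your proposal is correct and follows the paper's approach: on $[\alpha,\beta]$ you replace $\tau$ by $\max\{s_L,s_R\}$, the maximum of the continuations of the two outer $\rho$-trigonometric pieces, and you count singular points through the zeros of $s_L-s_R$, exactly as the paper does with its intersection point $\gamma$ and the degenerate case $\beta-\alpha=\pi/\rho$. The one difference is how you show $s_L,s_R\le\tau$ on $[\alpha,\beta]$. You apply Lemma \ref{lemma_h'} to $\tau-s_L$ and $\tau-s_R$, whereas the paper assumes there are no singular points strictly between $\alpha$ and $\beta$ and subtracts the middle sinusoid, so your argument also covers interior singular points directly.
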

    \begin{proof}
Suppose that there are no other singular points of $\tau$ between $\alpha$ and $\beta$ (it is sufficient to prove the lemma under this assumption, as the general case clearly follows). Then, for sufficiently small  $\eps\in(0,\frac\pi{2\rho})$, we have
$$\tau(t)=\begin{cases}
    A_1\cos\rho(t-\theta_1),&t\in(\alpha-\eps,\alpha);\\
    A_2\cos\rho(t-\theta_2),&t\in(\alpha,\beta);\\
A_3\cos\rho(t-\theta_3),&t\in(\beta,\beta+\eps),\end{cases}$$
for some real numbers $A_k,\theta_k,\;\;k=1,2,3.$

Put $\tau^*(t):=\tau(t)-A_2\cos\rho(t-
\theta_2).$
Then 
$$\tau^*(t)=\begin{cases}
B_1\sin\rho(\alpha-t),&t\in(\alpha-\eps,\alpha)\\
0,&t\in(\alpha,\beta);\\
B_2\sin\rho(t-\beta),&t\in(\beta,\beta+\eps),
\end{cases}$$
for some real numbers $B_1,B_2.$

The function $\tau^*$ is $\rho$-trigonometrically convex on the interval $(\alpha-\eps,\beta+\eps)$, therefore, we have $B_1>0, B_2>0$.
Hence, the function $B_1\sin\rho(\alpha-t)$ has its minimum at the point $m_1=\alpha+\frac\pi{2\rho},$
and the function $B_2\sin\rho(t-\beta)$ at the point 
$m_2=\beta-\frac\pi{2\rho}.$
 It also follows from the construction that 
 $$m_1-m_2=\alpha-\beta+\frac\pi\rho\in\bigl[0,\frac\pi\rho\bigr).$$

In the case  $m_1=m_2$, that is, when $\beta-\alpha =\dfrac \pi\rho$, we put 
$$\gamma:=\begin{cases}
    \alpha,& {\rm if } \;B_1\ge B_2;
    \\
    \beta,& {\rm if }\; B_1< B_2.
\end{cases}$$
In the case $m_1>m_2$, let $\gamma\in [\alpha,\beta]$ be the abscissa  of the  intersection point of the graphs of the functions $B_1\sin\rho(\alpha-t)$ and $B_2\sin\rho(t-\beta).$

Let us define the function $\kappa^*$ as follows:
$$\kappa^*(t):=\begin{cases}
    \max\{B_1\sin\rho(\alpha-t),B_2\sin\rho(t-\beta)\},&t\in(\alpha,\beta);
    \\
    \tau^*(t),&t\notin(\alpha,\beta).
\end{cases}$$
We have $\kappa^*(t)\le \tau^*(t)$, and the only possible singular point of $\kappa^*$ on the interval $(\alpha-\eps,\beta+\eps)$ is the point $\gamma$. It follows from the construction that $$(\kappa^*)'_+(\gamma)\ge(\kappa^*)'_-(\gamma),$$ therefore, the function $\kappa^*$ is elementary $\rho$-trigonometrically convex. 

Finally, we define the function $\kappa$:
$$\kappa(t):=\kappa^*(t)+A_2\cos\rho(t-\theta_2).$$
The function $\kappa$ is elementary $\rho$-trigonometrically convex, $\kappa(t)\le \tau(t)$ for $t\in[0,2\pi)$, moreover, it has at most one singular point $\gamma$ on the interval $(\alpha-\eps,\beta+\eps).$
 In particular, in the case $\beta=\alpha+\pi/\rho$ and $B_1=B_2$ the function $\kappa$ has no singularity at the point $\gamma$, hence, it has no singularities on the whole interval $(\alpha-\eps,\beta+\eps)$.
 
\end{proof}

Now, from Lemmas \ref{lemma2} and \ref{lemma3} we get
\begin{lemma}\label{lemma4}
       For every $\rho$-trigonometrically convex function $h$ there exists an elementary $\rho$-trigonometrically convex function $\tau$, with a widely spaced set of singular points, such that 
    $$\tau(t)\le h(t).$$
\end{lemma}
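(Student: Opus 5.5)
Start from Lemma \ref{lemma2}, which gives an elementary $\rho$-trigonometrically convex $\tau_0\le h$. Then remove close pairs of singular points one at a time using Lemma \ref{lemma3}, until the singular set is widely spaced. Because $\tau_0$ is elementary, its singular set $S_0$ is finite modulo $2\pi$. This makes an induction on the number $N=\#S_0$ of singular points (counted modulo $2\pi$) natural.

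\emph{The inductive step.} Suppose the current elementary function $\tau\le h$ has singular set $S$ that is not widely spaced. Then there are $\alpha,\beta\in S$, regarded on $\mathbb R$, with $0<\beta-\alpha\le\pi/\rho$; the case $d_{2\pi}(\alpha,\beta)\le\pi/\rho$ is handled by choosing the appropriate $2\pi$-translate.

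\begin{enumerate}
\item Choose $\alpha,\beta$ to be \emph{consecutive} singular points, which is possible since any close pair contains a consecutive close pair.
\item Apply Lemma \ref{lemma3} to get $\kappa$ with $\kappa=\tau$ outside $[\alpha,\beta]$ and $\kappa\le\tau\le h$.
\item Note that $\kappa$ has at most one singular point on $(\alpha-\eps,\beta+\eps)$.
\end{enumerate}

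The two singular points $\alpha,\beta$ are thus replaced by at most one. No new singular points arise elsewhere, because $\kappa=\tau$ off $[\alpha,\beta]$. So the count drops by at least one.

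To keep $2\pi$-periodicity, the modification of Lemma \ref{lemma3} must be performed simultaneously on all translates $[\alpha+2\pi m,\beta+2\pi m]$. These intervals are disjoint since $\beta-\alpha\le\pi/\rho<2\pi$ for $\rho>1/2$. Since the construction is local, the resulting $\kappa$ is still $2\pi$-periodic and elementary $\rho$-trigonometrically convex, because condition \eqref{h+_} is checked pointwise.

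\emph{Termination.} The number of singular points modulo $2\pi$ strictly decreases, so the process stops after finitely many steps. It may stop with no singular points at all. In that case the function is a single $A\cos\rho(t-\theta)$ that is $2\pi$-periodic, which forces $\rho\in\mathbb N$ or $A=0$. Either way it has an empty singular set, which is vacuously widely spaced, and it is admissible as an elementary function, covering the case $L=0$. In the other case the process stops at an elementary $\tau\le h$ whose singular set contains no pair at distance $\le\pi/\rho$, i.e.\ a widely spaced set, which is what Lemma \ref{lemma4} asserts.

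\emph{Main obstacle.} The main obstacle is bookkeeping at the boundary of the step.
\begin{itemize}
\item I must make sure the single new singular point $\gamma$ does not create a new close pair that undoes progress. This is harmless, since the argument only uses that the count decreases.
\item In the degenerate situation with exactly one singular point per period, or two points at distance exactly $\pi/\rho$ both ways (e.g.\ $\rho$ near $1$), I must check that Lemma \ref{lemma3} applies with $\alpha,\beta$ taken as distinct points in $\mathbb R$. I also need to verify that the operation on $[\alpha,\beta]$ and on its $2\pi$-translates do not overlap. This I would check directly using $\beta-\alpha\le\pi/\rho<2\pi$.
\end{itemize}
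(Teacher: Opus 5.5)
Your proposal is correct and follows the same route as the paper, which derives Lemma \ref{lemma4} directly from Lemmas \ref{lemma2} and \ref{lemma3}: take an elementary minorant and then repeatedly erase consecutive close pairs of singular points. The only difference is that you spell out what the paper leaves implicit: the induction on the number of singular points modulo $2\pi$, the simultaneous application on all $2\pi$-translates (disjoint because $\pi/\rho<2\pi$ when $\rho>1/2$), and the degenerate case where no singular points remain.
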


It follows from this lemma that the number of singular points of a function $\tau$ us less than $2\rho$.
We should add that this estimate in general cannot be improved, as the following example shows.
\begin{example}  \label{ex1}  
Given $\rho\in(1/2,1)\cup (1,\infty)$ we put $L=\lceil 2\rho\rceil-1<2\rho$, where $\lceil x\rceil$ denotes the ceiling function. That is, $L$ is the largest integer which is less than $2\rho$.
 Let us consider a function $h$ with period $\displaystyle T:=\frac{2\pi}{L}>\frac{\pi}{\rho}$, defined on the interval $[-T/2,T/2]$ by
$$h(t)=\cos\rho t,\;\;\;|t|\le T/2. $$ This function is elementary $\rho$-trigonometrically convex with exactly $L$ singular points $T/2+Tk, \;\;k=0,\dots, L-1$.
Suppose now that there is an elementary $\rho$-trigonometrically convex function $\tau$ such that $\tau(t)\le h(t).$
Then, in particular, for $k\in\mathbb Z,$
$$\tau\left(\pm\frac{\pi}{2\rho}+Tk\right)\le h\left(\pm\frac{\pi}{2\rho}+Tk\right)=0, $$
At the same time, by  Property C in Subsection \ref{rho-prelim-prop}, we have $$\displaystyle\tau\left(\frac{\pi}{2\rho}+Tk\right)+\tau\left(-\frac{\pi}{2\rho}+Tk\right)\ge0.$$ 
Hence,
$$\tau\left(\frac{\pi}{2\rho}+Tk\right)=\tau\left(-\frac{\pi}{2\rho}+Tk\right)=0.$$
On the other hand, for the point  $\displaystyle T/2\in\left[\frac{\pi}{2\rho},T-\frac{\pi}{2\rho}\right]$, we have
$$\tau(T/2)\le h(T/2)<0.$$
Note that the interval $\left[\dfrac{\pi}{2\rho},T-\dfrac{\pi}{2\rho}\right]$ has the length
$T-\dfrac\pi\rho \in\left(0,\dfrac\pi\rho\right).$
Obviously, every elementary $\rho$-trigonometrically convex function $\tau$ such that
$$\tau\left( \frac{\pi}{2\rho}\right)=\tau\left(T-\frac{\pi}{2\rho}\right)=0,$$
while $\tau(T/2)<0$, has a singular point on the interval $\displaystyle\left(\frac{\pi}{2\rho},T-\frac{\pi}{2\rho}\right).$
Analogously, $\tau$ has a singular point on each interval of the form $$\displaystyle\left(\frac{\pi}{2\rho}+Tk,T-\frac{\pi}{2\rho}+Tk\right).$$ Hence, on the whole interval $[0,2\pi)$ the function $\tau$ has at least $L$ singular points.
\end{example}

\subsection{Proof of  Theorem \ref{T1}}

 For a  regular measure $\Delta$  we define the  corresponding $\rho$-trigonometrically convex function $h_\Delta$ by \eqref{h}.
 
By Theorem 
\ref{PartI} (item 3), for every $j\in\mathbb N$ there exists a $\rho$-trigonometrically convex function $k_j$ such that 
 $$\max_{[0,2\pi]}(h_\Delta(t)+k_j(t))<\sigma_U(\Delta)+1/j.$$ 
 By Lemma \ref{lemma4}, for every $j\ge 1$, there exists an elementary $\rho$-trigonometrically convex function $\tau_j$ such that $\tau_j(t)\le k_j(t),$ and the function $\tau_j$ can be chosen to have at most $\lceil 2\rho\rceil-1$ singular points on the interval $[0,2\pi)$. 
Hence, we have a family of elementary $\rho$-trigonometrically convex functions with finite number  of parameters  (not more than $6\rho$).  { Note that, by Property C of $\rho$-trigonometrically convex functions, the amplitudes of all trigonometric "segments" of elementary $\rho$-trigonometrically convex functions in the construction are bounded by $\displaystyle 2\max_{[0,2\pi]}h_\Delta(t)+1$, while their phases and  singular points belong to the interval $[0,2\pi]$.}
Now, from the compactness arguments, it follows that there exists an extremal elementary $\rho$-trigonometrically convex function $k^*$ with at most $\lceil 2\rho\rceil-1$ singular points and such that $$\max_{[0,2\pi]}(h_\Delta(t)+k^*(t))=\sigma_U(\Delta).$$ {
We set $\Delta^*:=\Delta_{k^*}.$
By items (1) and (2) of Theorem \ref{PartI}, applied to the measure
$\Delta+\Delta^*$, we have
$$\sigma_Z(\Delta+\Delta^*)\le \max_{[0,2\pi]}(h_\Delta(t)+k^*(t))=\sigma_U(\Delta).$$
The opposite inequality follows directly from the variational representation.
Indeed, using the variational representation of the critical uniqueness
type, we have
$$
\sigma_U(\Delta)
=
\inf_{k\in TC_\rho}\max_t(h+k)
\le
\inf_{k\in TC_\rho}\max_t(h+k^*+k)
=
\sigma_U(\Delta+\Delta^*).
$$
Since
$
\sigma_U(\Delta+\Delta^*)\le \sigma_Z(\Delta+\Delta^*),
$
we obtain the opposite inequality.
}
\hfill $\Box$

\begin{rem}
     It should be noted that we do not claim that the type-minimizing  measure as in the Theorem \ref{T1} is unique { (see Example \ref{p=3}).}
\end{rem} 

  {
\section{Critical uniqueness type: geometric approach}\label{sec2}
  Recall that in the case  of $f\in\mathcal E_1$,  the indicator  function $h_f$ is the support function of a compact  convex set $I_f\subset \mathbb C$  (see Section \ref{rho=1} ). Such geometric point of view  turns to be useful also in case $\rho\ne 1$ \cite{Bernstein, Levin, Maergoiz}.

  We are starting this section with  a construction of a locally convex curve $K_h$ corresponding to a $\rho$-trigonometrically convex function $h$. In the second subsection we introduce the notion of a {\it nest} — a subarc of  $K_h$ with certain special properties. In our approach, nests play a central role when dealing with the critical uniqueness type and a type-minimizing measure from the geometric point of view.
  The rest of this section is devoted to the geometric description of the value of the critical uniqueness type and to the proof of  Theorem \ref{T2}.

\subsection{Locally  convex curves}

 Given a $\rho$-trigonometrically convex function $h$, let $\widetilde h$ be its   {\sf stretching} defined by $$\widetilde h(t):= h(t/\rho).$$  Since  $ h\in TC_\rho$, its stretching  $\widetilde h$ is a   $2\pi\rho$-periodic function  which is locally  1-trigonometrically convex  (but not necessarily  $2\pi$-periodic), { that is 
\begin{equation}\label{1-TC}
     \widetilde  h(t_1)\sin (t_2-t_3)
     +\widetilde h(t_2)\sin(t_3-t_1)+
     \widetilde h(t_3)\sin (t_1-t_2)\le 0,
\end{equation}
 for any  $t_1<t_2<t_3$ such that  $t_3-t_1<\pi$.
}

Given a  $\rho$-trigonometrically convex function $h$, we will now construct  an object that we will call a {\sf locally convex curve} 
$K_h$.   Such objects have also been used, for example, in \cite{Bernstein, Levin, Maergoiz}. 

For $t\in\mathbb R$ we consider the half-plane
\begin{equation}
    \label{Pi}
\Pi_t:=\{(x,y): x\cos t+y\sin t\le \widetilde h(t)\},\end{equation}
and denote  
its boundary by $$L_t=\partial \Pi_t.$$
We fix an  orientation of the line  $L_t$  by setting $\vec v_t:=(-\sin t, \cos t)$ (so that while traversing   $L_t$ in the direction of $\vec v_t$ we have the interior of $\Pi_t$ to the left).
Given an interval $I\subset \mathbb R$, put
$$\Pi_I:=\bigcap_{t\in I}\Pi_t.
$$
By the definition of $1$-trigonometrically convex functions \eqref{1-TC}, for $\eps~\in~(0,\pi/2)$ we  have
$$\widetilde h(t-\eps)\sin(-\eps)+\widetilde h(t)\sin(2\eps)+\widetilde h(t+\eps)\sin(-\eps)\le 0.$$
Hence,
\begin{equation}
    \label{2hcos}
2\widetilde h(t)\cos \eps \le \widetilde h(t+\eps)+\widetilde h(t-\eps).
\end{equation}
Taking the limit as $\eps\to \pi/2$ and using the continuity of $\widetilde h$, we obtain  Property F (Subsection \ref{rho-prelim-prop}). Therefore, the intersection of all  half-planes corresponding to the points of an interval $[t-\eps,t+\eps], \;\;\eps\in(0,\pi/2]$, is  an unbounded closed convex set:
$$\Pi_t^\eps =\Pi_{[t-\eps, t+\eps]},$$
 and its support function on the interval $[t-\eps, t+\eps]$ is equal to $\widetilde h$.

Let us consider some point $(x^*,y^*)\in L_t.$ Suppose $0<\alpha<\beta\le\pi/2.$ {
Then, it follows from the properties of trigonometrically convex functions that $(x^*,y^*)\in \Pi_{t+\alpha}$ implies $(x^*,y^*)\in \Pi_{t+\beta}.$
Indeed, by \eqref{TC} we have
$$\widetilde h(t)\sin(\alpha-\beta)+\widetilde h(t+\alpha)\sin\beta-\widetilde h(t+\beta)\sin\alpha\le 0.$$
Since $(x^*,y^*)\in L_t\cap\Pi_{t+\alpha},$ it follows that
$$(x^*\cos t+y^*\sin t)\sin(\alpha-\beta)+(x^*\cos(t+\alpha)+y^*\sin(t+\alpha))\sin\beta-\widetilde h(t+\beta)\sin\alpha\le 0,$$
$$\Updownarrow$$
$$\sin\alpha\cdot\left(x^*(\cos t\cos\beta -\sin t\sin\beta) +y^*(\sin t  \cos \beta +\cos t\sin\beta)\right)
-\widetilde h(t+\beta)\sin\alpha\le 0,$$
$$\Updownarrow$$
$$x^*\cos (t+\beta) +y^*\sin(t+\beta)\le\widetilde h(t+\beta).$$
Analogously, $L_t\cap \Pi_{t-\alpha}\subset \Pi_{t-\beta},$ where $0<\alpha<\beta\le\pi/2.$
}
 
Put  $$ K_t:=L_t\cap \Pi_t^{\pi/2}=L_t\cap \Pi_t^\eps, \;\;\forall \eps\in(0,\pi/2].$$
Due to the definition of trigonometrically convex functions, the set $K_t$ is always a connected, nonempty subset of the line $L_t$    (see Fig.\ref{Kt}). From the definition of $K_t$ it follows that ${x,y}\in K_t$ if and only if the following conditions are satisfied:
\begin{eqnarray}
x\cdot \cos t+y\cdot\sin t&=&\widetilde{h}(t)\label {Kt1}
\\
x\cdot \cos (t+\varepsilon)+y\cdot\sin (t+\varepsilon)&\le&\widetilde{h}(t+\varepsilon), \;\;\;0<|\varepsilon|<\dfrac{\pi}{2}.\label{Kt2}
\end{eqnarray}
Subtracting \eqref{Kt1} from \eqref{Kt2} and taking limit while $\varepsilon\to 0$, we get
$$\widetilde h'_-(t)\le -x\sin t+y\cos t\le \widetilde h'_+(t). 
$$
Hence, $K_t$ has the endpoints 
$$A_t=(\widetilde h(t) \cos t-\widetilde h'_-(t) \sin t,\widetilde h'_-(t) \cos t+\widetilde h(t) \sin t),$$ 
and
$$B_t=(\widetilde h(t) \cos t-\widetilde h'_+(t) \sin t,\widetilde h'_+(t) \cos t+\widetilde h(t) \sin t).$$
It follows that the length of the set $K_t$ is equal to
 $$|A_tB_t|=\widetilde h'_+(t)-\widetilde h'_-(t)=2\pi\cdot\Delta(\{t/\rho\}).$$
 
\begin{figure}[h]
    \centering
    \begin{tabular}{cc}
\includegraphics[width=50mm]{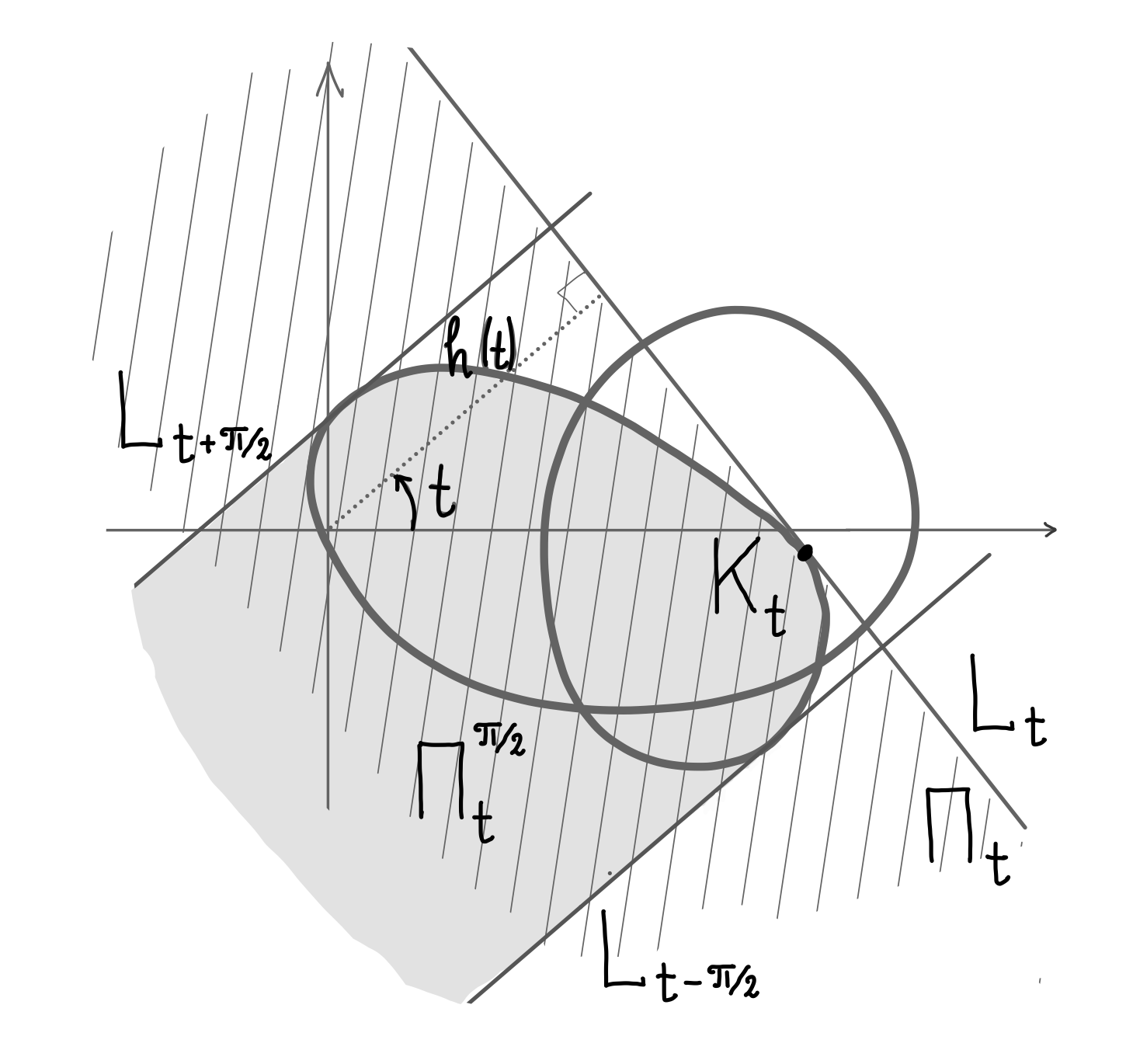}
&
\hspace{50pt}
\includegraphics[width=55mm]{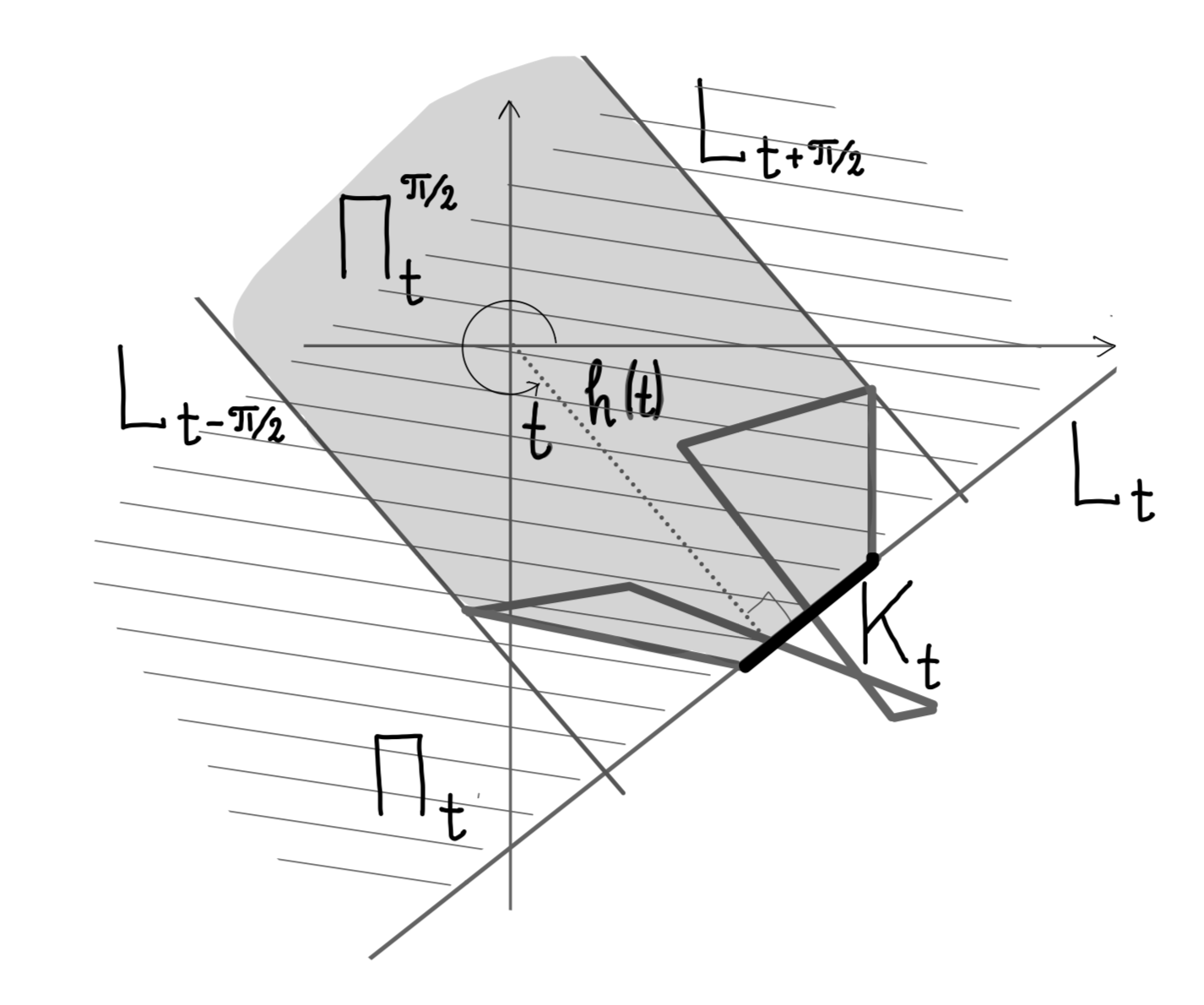}
\end{tabular}
    \caption{\small Construction of the set $K_t$.
}
    \label{Kt}
    
    \end{figure}

Given  $h\in TC_\rho$ we will say that the set 
$$K_h:=\bigcup_{t\in\mathbb R} K_t$$ is the {\sf locally trigonometrically convex curve} associated with $h$. For $t\in\mathbb R$ the set $L_t$ will be called a {\sf support line} of $K_h$.
For any finite interval (closed or open) $I\subset \mathbb R$  we will call the set $$K_I:=\bigcup_{t\in I}K_t$$
a {\sf subarc} of the set $K_h$ (see Fig.\ref{KI1}).
\begin{figure}[h]
    \centering

\includegraphics[width=90mm]{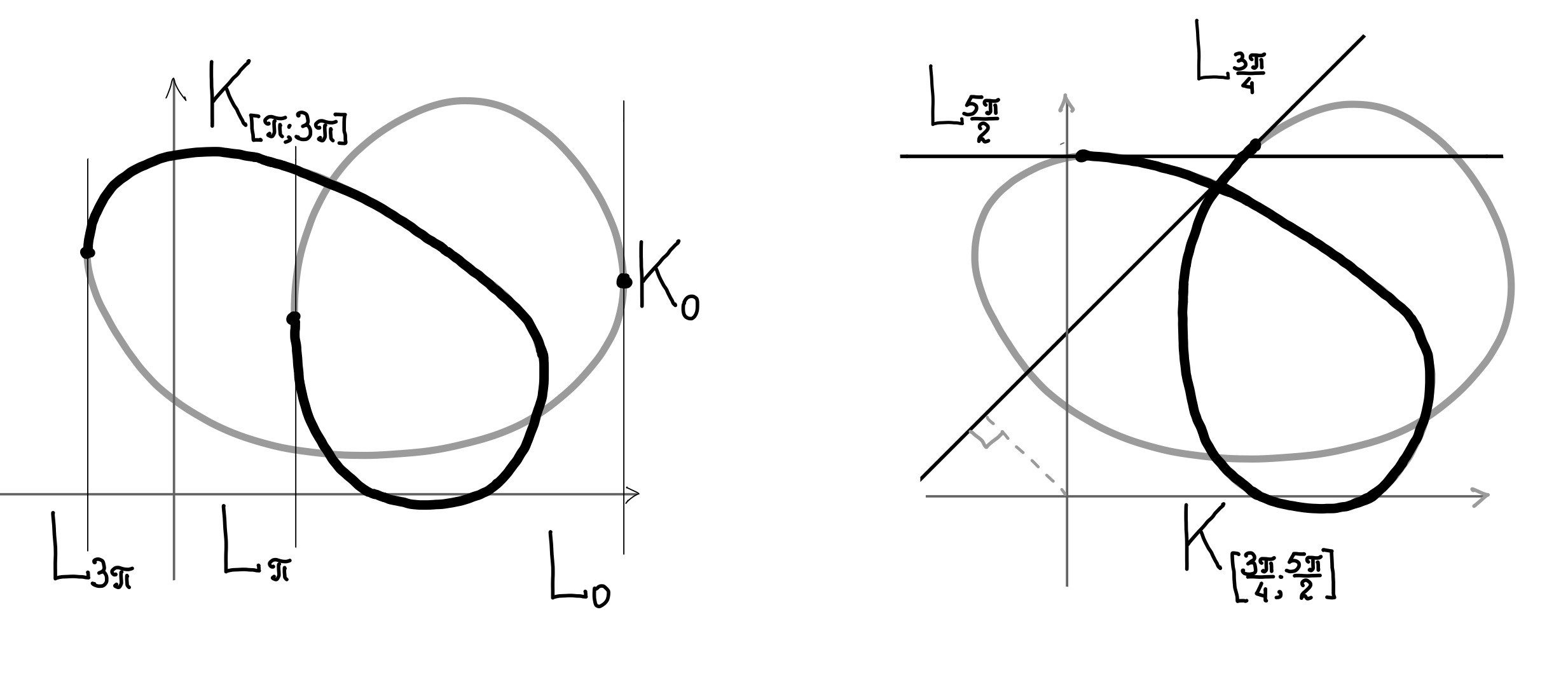}

    \caption{\small Subarcs $K_{[\pi; 3\pi]}$ and $K_{[3\pi/4; 5\pi/2]}$.
}
    \label{KI1}
    
    \end{figure}

Since $\widetilde h$ is $2\pi\rho$-periodic,  $K_h$ is invariant under rotation by the angle $2\pi\rho$.
Note that if $\rho\in\mathbb Q$, then any function $h\in TC_\rho$ has a period that is an integer multiple of $2\pi$,  and hence the corresponding locally convex curve $K_h$ is closed.   On the other hand, in the case $\rho\notin\mathbb Q$ the curve $K_h$ is not closed but remains bounded. 
In the case $\rho\in\mathbb N$, when the curve $K$ is traversed once, the tangent vector $\vec{v}_t$ winds around $\rho$ times, turning through a total angle $2\pi\rho$.



For a bounded subset of the complex plane $E\subset\mathbb C$, let $D(E)$
be the smallest closed disk containing $E$. Its boundary $\partial D(E)$ is
called the {\sf circumcircle} of $E$.
The radius of this circle $R(E)$ is called the {\sf circumradius} of $E$, and its center $O(E)$ is called the {\sf circumcenter} of  the set $E$. {
We will frequently use a simple property of the circumcircle, which follows directly from its definition.  Namely, there are two possibilities:
\\
--- either there are two points $A,B\in E\cap\partial D(E)$ such that $[AB]$
is a diameter of $D(E)$;\\
--- or there are
three points $A,B,C\in E\cap\partial D(E)$
such that the circumcenter $O(E)$ is an inner point of the triangle $ABC$.
}

Consider a $\rho$-trigonometrically convex function $h$ and the corresponding locally
convex curve $K_h$.
 We define {\sf the global circumradius} of  $h$ as the circumradius of the corresponding locally trigonometrically convex curve~$K_h$:
 $$R_h:=R(K_h).$$
 We will also use the 
 following notation: $$D_h=D(K_h), O_h=O(K_h).$$
{

The following  lemma follows immediately from (\ref{Pi}) together with the definition of locally convex curve and the definition of the equivalence of $\rho$-trigonometrically convex functions.

\begin{lemma}\label{shift} Let $\rho\in \mathbb N$.
Equivalent $\rho$-trigonometrically convex functions $h\sim g$ are associated with locally convex curves $K_h$, $K_g$ that can be obtained from each other through a parallel translation. Moreover, if $g(t)=h(t)+a\cos \rho t +b\sin \rho t,$ then $K_{g}$ is a translation of $K_{h}$ by the vector $(a,b).$
\end{lemma}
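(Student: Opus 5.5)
The plan is to work directly from the definitions in (\ref{Pi}) and the definition of $K_t$, by checking how each object changes under $g=h+a\cos\rho t+b\sin\rho t$. Every element of $T_\rho$ has this form, so this covers all equivalent pairs $h\sim g$. After stretching we get
\[
\widetilde g(t)=g(t/\rho)=\widetilde h(t)+a\cos t+b\sin t .
\]
The added term is exactly the support function of the single point $(a,b)$, since $x\cos t+y\sin t$ evaluated at $(x,y)=(a,b)$ equals $a\cos t+b\sin t$.

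Next, fix $t$ and compare the half-planes. A point $(x,y)$ lies in $\Pi_t(g)$ if and only if
\[
(x-a)\cos t+(y-b)\sin t\le \widetilde h(t),
\]
which holds if and only if $(x-a,y-b)\in\Pi_t(h)$. Hence $\Pi_t(g)=\Pi_t(h)+(a,b)$ for every $t$, and the same holds for the boundary lines, $L_t(g)=L_t(h)+(a,b)$.

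Translation commutes with intersections, so $\Pi_t^{\pi/2}(g)=\Pi_t^{\pi/2}(h)+(a,b)$. It follows that
\[
K_t(g)=L_t(g)\cap\Pi_t^{\pi/2}(g)=K_t(h)+(a,b).
\]
Taking the union over $t\in\mathbb R$ gives $K_g=K_h+(a,b)$. As a consistency check, the endpoint formulas for $A_t,B_t$ shift by exactly $(a,b)$, because $\widetilde g'_\pm=\widetilde h'_\pm-a\sin t+b\cos t$.

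There is no real obstacle here. The only point needing care is that the translation vector is $(a,b)$ with the stated sign conventions, which follows from the identity $\cos(t/\rho\cdot\rho)=\cos t$. Equivalently, one checks that the stretching of $a\cos\rho t+b\sin\rho t$ is exactly the linear functional evaluated at the point $(a,b)$.
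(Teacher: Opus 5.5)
Your proposal is correct and is the same argument the paper intends. The paper only remarks that the lemma follows immediately from \eqref{Pi} and the definitions. You write out that verification: $\widetilde g=\widetilde h+a\cos t+b\sin t$ gives $\Pi_t(g)=\Pi_t(h)+(a,b)$ for every $t$, and since translation commutes with the intersections and unions defining $K_t$ and $K_h$, it follows that $K_g=K_h+(a,b)$.
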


It follows  that for $\widehat h$, the $\rho$-balanced  modification of $h$ (see   \eqref{lb-m}), the locally convex curve
$K_{\widehat h}$ is a shift of $K_h$. On the other hand,
due to the definition of a $\rho$-balanced function (see Section \ref{Recall}), the global circumradius of $K_{\widehat h}$ equals the maximum of ${\widehat h}$. Hence, 
by items (1) and (2) of Theorem \ref{PartI}, $$\sigma_Z(\Delta)=\max_{t\in[0,2\pi]} \left(\widehat h_\Delta(t)\right)=R_{h_\Delta}.$$
}

}


\subsection{Nests} Now we introduce the central notion of this section.
 We will say that a subarc $N_\alpha:=K_{(\alpha-2\pi,\alpha)},$ $\alpha\in[0,2\pi\rho),$ is a {\sf nest}, if 
  $$
\begin{cases}
       \widetilde h(\alpha)=\widetilde  h(\alpha-2\pi);\\
    \widetilde h'_-(\alpha)\le\widetilde  h'_+(\alpha-2\pi).
\end{cases}$$
In particular, it follows that $K$ has a double-support line $L_\alpha$ such that
$L_\alpha=L_{\alpha-2\pi}$ (see Fig.~
\ref{KI2}).

The following proposition provides equivalent characterizations of a nest.
\begin{proposition}\label{crit_nest}
    Given a regular measure $\Delta$, let $h$ be the corresponding $\rho$-trigonometrically convex function, and let $K$ be the associated locally convex curve. The following assertions are equivalent (see Fig.~
\ref{KI2}):
    \begin{enumerate}
        \item[(i)] $N_\alpha:=K_{(\alpha-2\pi,\alpha)}$ is a nest;
        \vspace{4pt}
        \item[(ii)]  $\displaystyle\int_{\alpha-2\pi}^\alpha e^{i(t-\alpha)} {\rm d}\widetilde\Delta_\alpha(t)\in(\infty,0],$
        where $\widetilde\Delta_\alpha$ is the measure on the interval $(\alpha-2\pi,\alpha)$ defined by $\widetilde\Delta_{\alpha}(E)=\Delta(E/\rho)$ (if $\rho<1$ and $E/\rho$ meets more than one period, $\Delta$ is understood
as its $2\pi$-periodic extension to $\mathbb R$);
          \vspace{4pt}
   \item[(iii)]  the $2\pi$-periodic continuation $k_\alpha$ of the function $\widetilde h|_{[\alpha-2\pi,\alpha)}$ from  the interval $[\alpha-2\pi,\alpha)$ to $\mathbb R$
is the support function of the set $\Pi_{[\alpha-2\pi,\alpha]}={\rm conv} (N_\alpha)$.
   \end{enumerate}
\end{proposition}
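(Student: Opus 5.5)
The plan is to prove the cycle (i)$\Leftrightarrow$(ii) and (i)$\Leftrightarrow$(iii), working throughout with the stretched function $\widetilde h(t)=h(t/\rho)$. On any interval of length less than $2\pi$ it is 1-trigonometrically convex, and its associated measure is $2\pi\widetilde\Delta$. By the analogue of \eqref{Delta}, this means $\widetilde h''+\widetilde h=2\pi\widetilde\Delta$ in the sense of distributions on $(\alpha-2\pi,\alpha)$.

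For (i)$\Leftrightarrow$(ii), the first step is to multiply by $e^{i(t-\alpha)}$ and integrate by parts twice over $(\alpha-2\pi,\alpha)$. The kernel $e^{i(t-\alpha)}$ solves $u''+u=0$ and has the same value and derivative at both endpoints, so the interior terms cancel and only boundary terms remain. Collecting them should give
\begin{equation*}
2\pi\int_{\alpha-2\pi}^{\alpha}e^{i(t-\alpha)}\,{\rm d}\widetilde\Delta_\alpha(t)
=\bigl(\widetilde h'_-(\alpha)-\widetilde h'_+(\alpha-2\pi)\bigr)-i\bigl(\widetilde h(\alpha)-\widetilde h(\alpha-2\pi)\bigr).
\end{equation*}
Endpoint atoms are excluded by the choice of the open interval, which is why one-sided derivatives appear. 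I would check this carefully, and the sign of the imaginary part is immaterial. With this identity in hand, the integral lies on $(-\infty,0]$ exactly when the imaginary part vanishes and the real part is nonpositive. Those two conditions are precisely the two defining conditions of a nest. (The interval in (ii) should read $(-\infty,0]$.)

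For (iii), I would argue geometrically. Assume (i). Define $k_\alpha$ as the $2\pi$-periodic continuation of $\widetilde h|_{[\alpha-2\pi,\alpha)}$; it is continuous because $\widetilde h(\alpha)=\widetilde h(\alpha-2\pi)$. Away from the junction point it is locally 1-trigonometrically convex. At the junction, its jump of derivative is $\widetilde h'_+(\alpha-2\pi)-\widetilde h'_-(\alpha)\ge0$, so by the derivative criterion \eqref{rho_tc} with $\rho=1$ it is a genuine $2\pi$-periodic 1-trigonometrically convex function. By Subsection~\ref{rho=1}, $k_\alpha$ is therefore the support function of the compact convex set $I_{k_\alpha}=\bigcap_t\{x\cos t+y\sin t\le k_\alpha(t)\}$, and this intersection is exactly $\Pi_{[\alpha-2\pi,\alpha]}$. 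Moreover, the boundary of $I_{k_\alpha}$ is the union over $t$ of its faces, each of which is given by the same formulas for $A_t,B_t$ as $K_t$. These faces coincide with the $K_t$ for $t\in(\alpha-2\pi,\alpha)$, with the face at $\alpha$ filling in the segment between the ends of the arc. Hence $\Pi_{[\alpha-2\pi,\alpha]}={\rm conv}(N_\alpha)$.

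Conversely, suppose $k_\alpha$ is the support function of a convex set. Then it is continuous at the junction, which forces $\widetilde h(\alpha)=\widetilde h(\alpha-2\pi)$. It is also 1-trigonometrically convex there, which forces $(k_\alpha)'_+\ge(k_\alpha)'_-$ at $\alpha-2\pi$, that is, $\widetilde h'_+(\alpha-2\pi)\ge\widetilde h'_-(\alpha)$. This is (i).

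I expect the main obstacle to be the identification ${\rm conv}(N_\alpha)=\Pi_{[\alpha-2\pi,\alpha]}$ in the direction (i)$\Rightarrow$(iii). The key point there is that $K_t$ as defined via $\Pi_t^{\pi/2}$, a local object, agrees with the face of the global set $I_{k_\alpha}$ in direction $t$. I would prove this using the endpoint formulas for $A_t,B_t$ and the standard fact that for a support function $k$ the face in direction $t$ is the segment with endpoints determined by $k'_\pm(t)$.
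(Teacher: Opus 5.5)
Your argument is correct, but it is organized differently from the paper's.

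\textbf{How the paper argues.} It proves the cycle (i)$\Rightarrow$(iii)$\Rightarrow$(ii)$\Rightarrow$(i).
\begin{itemize}
\item Its first step coincides with yours: $k_\alpha$ is continuous with a nonnegative derivative jump at $\alpha$, hence lies in $TC_1$.
\item For (iii)$\Rightarrow$(ii) it writes the measure of $k_\alpha$ as $\widetilde\Delta_\alpha+\frac{1}{2\pi}\bigl(\widetilde h'_+(\alpha-2\pi)-\widetilde h'_-(\alpha)\bigr)\delta_\alpha$. It reads the jump as the length of the closing segment $J_\alpha\subset L_\alpha$ and then uses the vanishing first moment of a $1$-regular measure.
\item For (ii)$\Rightarrow$(i) it adds the atom $-\int e^{i(t-\alpha)}\,\mathrm{d}\widetilde\Delta_\alpha\cdot\delta_\alpha$ to obtain a $1$-regular measure. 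It then reconstructs the corresponding $TC_1$ function and identifies it with $k_\alpha$ up to a trigonometric term.
\end{itemize}

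\textbf{How you argue.} You get (i)$\Leftrightarrow$(ii) in one stroke from your Green-type identity; I checked it, including the sign of the imaginary part. You prove (iii)$\Rightarrow$(i) directly from the continuity of support functions and the inequality $k'_-\le k'_+$.

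\textbf{What each route gives.} Your identity is more self-contained: it needs neither the zero-moment condition nor the reconstruction of a $TC_1$ function from its measure. It also has a transparent geometric meaning. The point of $K_t$ is $(\widetilde h(t)+i\widetilde h'(t))e^{it}$, so $2\pi\int e^{i(t-\alpha)}\,\mathrm{d}\widetilde\Delta_\alpha$ equals $-ie^{-i\alpha}$ times the vector from the initial end $B_{\alpha-2\pi}$ to the terminal end $A_\alpha$ of the arc $N_\alpha$. Thus (ii) says exactly that both ends lie on $L_\alpha$, and that the segment of $L_\alpha$ leading from $A_\alpha$ back to $B_{\alpha-2\pi}$ points in the direction $\vec v_\alpha$. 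Applied to a genuinely $2\pi$-periodic $TC_1$ function over a full period, your identity even yields the zero-moment condition the paper invokes. The paper's route, for its part, links (ii) immediately to the convex body in (iii).

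\textbf{Two small remarks.} You are right that the interval in (ii) should be $(-\infty,0]$. Also, since $N_\alpha$ is built over an open interval, what both arguments really give is $\Pi_{[\alpha-2\pi,\alpha]}=\overline{\operatorname{conv}}(N_\alpha)$. For example, for $\widetilde h\equiv 1$ the arc $N_\alpha$ misses the point $e^{i\alpha}$, which is an extreme point of the disk, so the plain convex hull is not closed.
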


\begin{figure}[h]
    \centering
   \includegraphics[width=90mm]{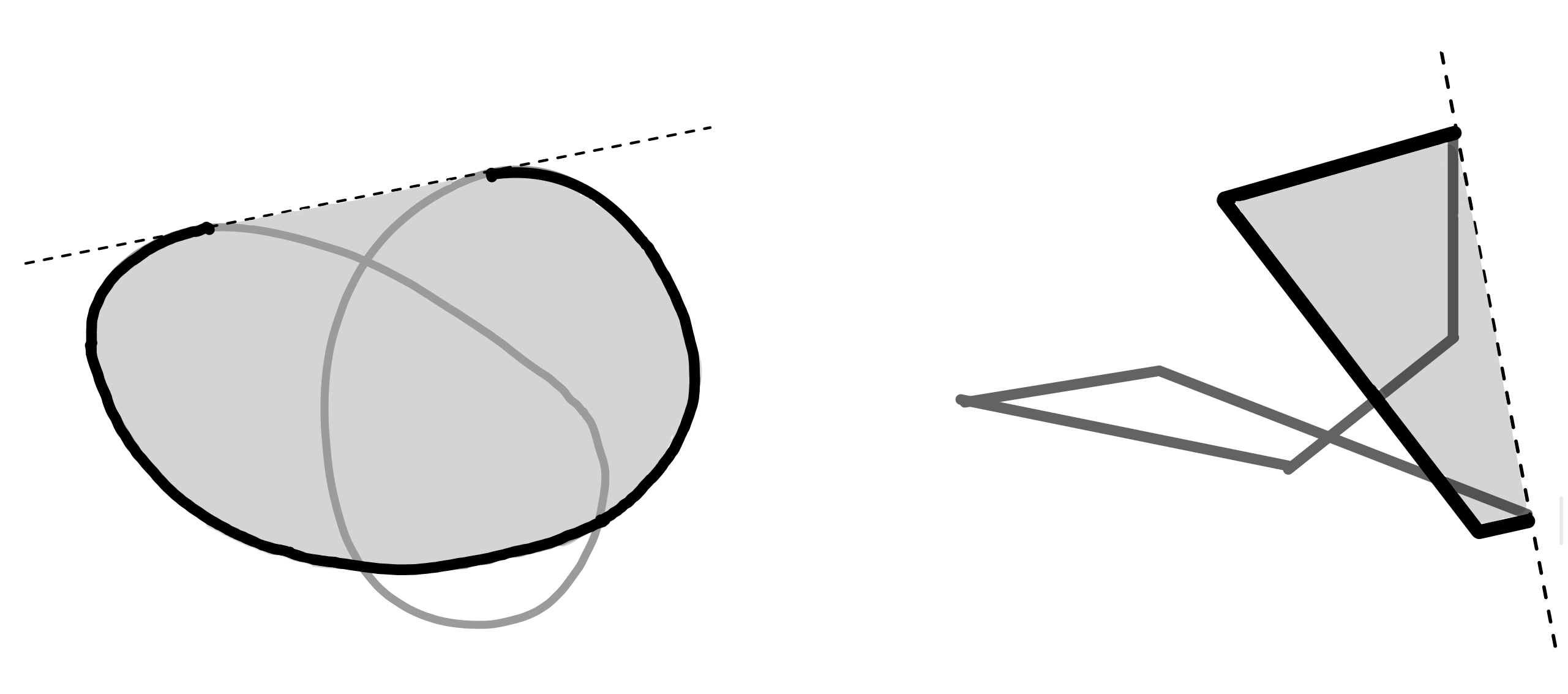}
    \caption{\small Examples of nests, $\rho=2$
}
      \label{KI2}
    
    \end{figure}

\begin{proof}
$(i)\Rightarrow (iii)$. Assume that $N_\alpha$ is a nest and consider the $2\pi$-periodic function defined in statement $(iii)$:
    $$k_\alpha(t+2\pi k)=\widetilde h|_{[\alpha-2\pi,\alpha)}(t).$$
    Since $\widetilde h(\alpha)=\widetilde  h(\alpha-2\pi)$, the function
$k_\alpha$ is continuous on $\mathbb R$.
Next, since $\widetilde h'_-(\alpha)\le\widetilde  h'_+(\alpha-2\pi)$ we have $(k_\alpha)'_-(\alpha)\le(k_\alpha)'_+(\alpha)$, and hence $k_\alpha\in TC_1.$
Therefore, $k_\alpha$ is a support function of some plane convex set. It is clear from the construction that $k_\alpha$ is the support function of $\Pi_{[\alpha-2\pi,\alpha]}$.

Now, let us show that $(iii)\Rightarrow (ii)$.
Note that
$$\partial \Pi_{[\alpha-2\pi,\alpha]}=K_{(\alpha-2\pi,\alpha)}\cup J_\alpha,$$
where $J_\alpha\subset L_\alpha$ is an interval of  length 
$$A_\alpha:=(k_\alpha)'_+(\alpha)-(k_\alpha)'_-(\alpha)=\widetilde h'_+(\alpha-2\pi)-\widetilde  h'_-(\alpha)\ge0.$$
The measure $\Delta_{k_\alpha}$ corresponding to $k_\alpha$  is defined for an arbitrary interval { $[\eta,\zeta)\subset\mathbb R/2\pi\mathbb Z$ by 
$$\Delta_{k_\alpha}[\eta,\zeta)= \widetilde\Delta_\alpha(\eta,\zeta)+\frac1{2\pi}A_\alpha\delta_{\alpha}[\eta,\zeta).$$
}

Since $\Delta_{k_\alpha}$ corresponds to $k_\alpha\in TC_1$, it follows that
$$\displaystyle\int_{\alpha-2\pi}^\alpha e^{it}{\rm d}\Delta_{k_\alpha} (t)=0,$$
which is equivalent to
$$\displaystyle\int_{\alpha-2\pi}^\alpha e^{it} {\rm d}\widetilde\Delta_\alpha(t)+\frac1{2\pi} A_\alpha e^{i\alpha}=0.$$
Hence,
$$\displaystyle\int_{\alpha-2\pi}^\alpha e^{i(t-\alpha)} {\rm d}\widetilde\Delta_\alpha(t)\le0.$$

Finally, if $(ii)$ is satisfied, put
$$A_\alpha:=-2\pi\displaystyle\int_{\alpha-2\pi}^\alpha e^{i(t-\alpha)} {\rm d}\widetilde\Delta_\alpha(t)\ge 0.$$
It means that the measure
$$\widehat\Delta_{\alpha}[\eta,\zeta):=\widetilde \Delta_\alpha[\eta,\zeta)+\frac1{2\pi}A_\alpha\delta_{\alpha}[\eta,\zeta)$$
is a $1$-regular measure. { Using \eqref{Delta},} we observe that the corresponding  $1$-trigonometrically convex function (up to a trigonometric term) is exactly the function 
$k_\alpha$ defined in $(iii)$. Hence,  using the continuity of $k_\alpha$, we have
$$k_\alpha(\alpha)=\widetilde h(\alpha)=\widetilde h(\alpha-2\pi),$$
and
$$A_\alpha:=(k_\alpha)'_+(\alpha)-(k_\alpha)'_-(\alpha)=\widetilde h'_+(\alpha-2\pi)-\widetilde  h'_-(\alpha)\ge0.$$

Hence, $(ii)$ implies $(iii)$ and $(i)$.

\end{proof}

Given a nest $N_\alpha$ we will use the following notation for the parameters of its circumdisk $D_\alpha:=D(N_\alpha)$:
$$R_\alpha:=R(N_\alpha), \;\;O_\alpha:=O(N_\alpha)=r_\alpha e^{i\theta_\alpha}.$$
The value $R_\alpha$ will be referred to as a {\sf local circumradius of $K_h$}.
We have 
\begin{equation} \label{Nest}
    \widetilde h(t)=k_\alpha(t)\le R_\alpha + r_\alpha\cos(t-\theta_\alpha)=:\tau_\alpha(t),\;\;\;t\in[\alpha-2\pi,\alpha],
    \end{equation}
    where the function $\tau_\alpha$ is the support function of $D_\alpha$.

The following lemma will be useful.
\begin{lemma}\label{existence of a nest}
    Let $\beta-\alpha\in \left[0,2\pi/\rho\right),$  
    $h\in TC_\rho\left[\beta-2\pi/\rho,\alpha+2\pi/\rho\right]$, and 
    $$h(t)\le h(\alpha)=h(\beta), \;\;\forall t\in \left[\beta-2\pi/\rho,\alpha+2\pi/\rho\right].$$
Then there exists $t^*\in [\beta, \alpha+2\pi/\rho]$ such that 
$N_{\rho t^*}=K_{(\rho t^*-2\pi, \rho t^*)}$ is a nest.
\end{lemma}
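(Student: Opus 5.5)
The plan is to translate the nest condition back to the variable $t$ and then locate $t^*$ as the first zero of a difference function. Put $T:=2\pi/\rho$ and $M:=h(\alpha)=h(\beta)$. Since $\widetilde h(s)=h(s/\rho)$, the conditions defining the nest $N_{\rho t^*}$ are $\widetilde h(\rho t^*)=\widetilde h(\rho t^*-2\pi)$ and $\widetilde h'_-(\rho t^*)\le \widetilde h'_+(\rho t^*-2\pi)$. After rescaling (both derivatives carry the same factor $1/\rho$), these become
$$h(t^*)=h(t^*-T),\qquad h'_-(t^*)\le h'_+(t^*-T).$$
For $t^*\in[\beta,\alpha+T]$ both $t^*$ and $t^*-T$ lie in $[\beta-T,\alpha+T]$. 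There $h\in TC_\rho$, so the one-sided derivatives exist (Property A, used on the interval). So only the values of $h$ on the given interval matter.

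Define the continuous function $g(t):=h(t)-h(t-T)$ on $J:=[\beta,\alpha+T]$. This interval is nondegenerate because $\beta-\alpha<T$. By the maximality hypothesis, $g(\beta)=M-h(\beta-T)\ge 0$ and $g(\alpha+T)=h(\alpha+T)-M\le 0$, so $g$ has zeros in $J$. The key step is to choose a zero at which $g$ is positive immediately to the left. Suppose $t^*>\beta$ is a zero with $g>0$ on an interval $(s_0,t^*)$. Then for $s\in(s_0,t^*)$ we have
$$h(t^*)-h(s)<h(t^*-T)-h(s-T).$$
Dividing by $t^*-s$ and letting $s\to t^*-$ gives $h'_-(t^*)\le h'_-(t^*-T)$, and Property A gives $h'_-(t^*-T)\le h'_+(t^*-T)$. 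This is exactly the nest condition.

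It remains to produce such a zero, and the only delicate point is the endpoint $\beta$. This is where I expect the main care to be needed.

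\emph{Case $g(\beta)>0$.} Take $t^*:=\inf\{t\in J: g(t)\le 0\}$. Then $t^*>\beta$, $g(t^*)=0$, and $g>0$ on $[\beta,t^*)$, so the argument above applies.

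\emph{Case $g(\beta)=0$.} Then $\beta-T$ and $\beta$ are both maximum points of $h$ on the interval. Since $\beta$ is an interior point, $h'_-(\beta)\ge 0\ge h'_+(\beta)$, and Property A forces $h'_\pm(\beta)=0$. Since $\beta-T$ is the left endpoint and a maximum, $h'_+(\beta-T)\le 0$.
\begin{itemize}
\item If $h'_+(\beta-T)=0$, take $t^*=\beta$: the nest condition reads $0\le 0$.
\item If $h'_+(\beta-T)<0$, then $g'_+(\beta)=-h'_+(\beta-T)>0$, so $g>0$ on some $(\beta,\beta+\delta)$. Take $t^*:=\inf\{t\in(\beta,\alpha+T]: g(t)\le 0\}$. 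This infimum is attained, because $g$ is continuous and $g(\alpha+T)\le0$, and $t^*\ge\beta+\delta>\beta$. Moreover $g>0$ on $(\beta,t^*)$, so the left-difference argument again gives the nest condition.
\end{itemize}

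In every case $t^*\in[\beta,\alpha+T]$ and $N_{\rho t^*}$ is a nest. When $t^*=\alpha+T$ we use only the left derivative of $h$ at the right endpoint of the interval, which exists.
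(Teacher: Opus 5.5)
Your proof is correct and follows the same route as the paper. The paper works with $d(t)=h(t-2\pi/\rho)-h(t)=-g(t)$, takes a zero $t^*\in[\beta,\alpha+2\pi/\rho]$ with $d'_-(t^*)\ge 0$ (that is, $g>0$ just to the left of $t^*$), and concludes $h'_+(t^*-2\pi/\rho)\ge h'_-(t^*-2\pi/\rho)\ge h'_-(t^*)$ via Property A. Your separate handling of the boundary case $g(\beta)=0$, split according to the sign of $h'_+(\beta-2\pi/\rho)$, fills in a detail the paper passes over: there the paper's condition $d'_-(\beta)\ge 0$ would involve values of $h$ to the left of $\beta-2\pi/\rho$, outside the given interval.
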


\begin{proof}
Note, that
$$\beta-2\pi/\rho<\alpha\le\beta<\alpha+2\pi/\rho.$$
Let us define $d:[\beta,\alpha+2\pi/\rho]\to \mathbb R$ by
 $$d(t):=h(t-{2\pi}/\rho)-h(t).$$
Then
$$
  d(\alpha+{2\pi}/\rho)=h(\alpha)-h(\alpha+{2\pi}/\rho)\ge0,$$
and 
$$    d(\beta)=h(\beta-2\pi/\rho)-h(\beta)\le 0.
$$
{ From the properties of $\rho$-trigonometrically convex functions it follows that $d$ is continuous and has one-sided derivatives in every point.}
 Therefore, there exists $t^*\in[\beta,\alpha+{2\pi}/\rho]$ such that
 $$d(t^*)=0, d'_-(t^*)\ge 0.$$

Thus, we have 
$$h(t^*)=h(t^*-{2\pi}/{\rho})$$
and 
$$h'_+(t^*-{2\pi}/{\rho})-h'_-(t^*)\ge h'_-(t^*-{2\pi}/{\rho})-h'_-(t^*)
= d'_-(t^*)\ge0.$$
Hence, $N_{\rho t^*}=K_{(\rho t^*-2\pi,\rho t^*)}$ is a nest.
\end{proof}

\subsection{Lower  bound for the critical uniqueness type.}

Put
\begin{equation}
\label{R2}R^*_{\rm loc}:= \sup\{R_\alpha:N_\alpha {\rm \; is \; a \; nest\; for\; }K_h\}.\end{equation}
As we will show in the next section, the supremum in \eqref{R2} is in fact attained, which justifies the term "maximal circumradius" that we have used in the introduction for the value $R^*_{\rm loc}$.

Our next theorem provides a lower bound for the critical uniqueness type. In Theorem \ref{upper} below, we prove that this bound is sharp.

\begin{theorem}
    \label{LN}
 For every   $h\in TC_\rho$ there exists at least one nest for the corresponding locally convex curve $K_h$. Furthermore,
\begin{equation}
\label{R1}
\sigma_U(\Delta_h)\ge R^*_{\rm loc}.
\end{equation}
\end{theorem}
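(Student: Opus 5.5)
Existence of a nest comes from Lemma \ref{existence of a nest}, applied to a point where $h$ attains its maximum. Let $\alpha=\beta$ be a point with $h(\alpha)=\max_{\mathbb R}h$. Then $\beta-\alpha=0\in[0,2\pi/\rho)$, and $h\in TC_\rho$ on all of $\mathbb R$. Also $h(t)\le h(\alpha)$ everywhere. The lemma therefore gives $t^*\in[\alpha,\alpha+2\pi/\rho]$ such that $N_{\rho t^*}$ is a nest (after reducing $\rho t^*$ modulo $2\pi\rho$). The set of nests is thus nonempty and $R^*_{\rm loc}$ is well defined.

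For the inequality \eqref{R1}, I will use item (3) of Theorem \ref{PartI}:
$$\sigma_U(\Delta_h)=\inf_{k\in TC_\rho}\max_t(h+k).$$
So it suffices to show that for every nest $N_\alpha$ and every $k\in TC_\rho$ we have $\max_t(h+k)\ge R_\alpha$. Fix $k$ and put $g=h+k\in TC_\rho$. Pass to stretchings: $\widetilde g=\widetilde h+\widetilde k$ on the interval $[\alpha-2\pi,\alpha]$, which has length exactly $2\pi$.

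The key idea is to compare with the $2\pi$-periodic $1$-trigonometrically convex function $k_\alpha$ from Proposition \ref{crit_nest}(iii). This is the support function of ${\rm conv}(N_\alpha)$, whose circumradius is $R_\alpha$. Define $G$ as the $2\pi$-periodic continuation of $\widetilde g|_{[\alpha-2\pi,\alpha)}$. On that interval $G=k_\alpha+\widetilde k$. Then:
\begin{enumerate}
\item At the junction point, $\widetilde k(\alpha)$ and $\widetilde k(\alpha-2\pi)$ need not agree. I will therefore not require $G$ to be globally $1$-trigonometrically convex.
\item Instead I use the general fact behind items (1) and (2): the maximum of a $1$-trigonometrically convex function bounds the circumradius of its convex set. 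Concretely, $\max_{[\alpha-2\pi,\alpha]}\widetilde g$ is at least the maximum over directions of the support of a set containing a translate-free copy of $N_\alpha$.
\end{enumerate}
A cleaner route is as follows. Since $\widetilde k$ is locally $1$-trigonometrically convex on the interval, on any subinterval of length less than $\pi$ it dominates some $a\cos t+b\sin t$. Adding such local pieces shifts $N_\alpha$ locally and cannot decrease the circumradius globally.

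Here I expect the main obstacle. I would argue by contradiction. Suppose $\widetilde h+\widetilde k<R_\alpha$ on $[\alpha-2\pi,\alpha]$. Take the circumcircle characterization: there are two antipodal points, or three points with $O_\alpha$ inside their triangle, of $N_\alpha$ on $\partial D_\alpha$. These have support directions $t_1,t_2,(t_3)$ in $[\alpha-2\pi,\alpha]$ with $\widetilde h(t_i)=R_\alpha+r_\alpha\cos(t_i-\theta_\alpha)$, by \eqref{Nest}. The condition that $O_\alpha$ lies in the triangle means that $0$ is in the convex hull of the $e^{it_i}$.

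Then $\widetilde k(t_i)<-r_\alpha\cos(t_i-\theta_\alpha)$ at these directions, with consecutive gaps at most $\pi$. I would show this contradicts local $1$-trigonometric convexity of $\widetilde k$, via Property C or inequality \eqref{TC} chained across the gaps. The chaining uses the identity
$$\sum_i c_ie^{it_i}=0,\qquad c_i\ge0.$$
Applied to $\widetilde k+r_\alpha\cos(\cdot-\theta_\alpha)$, which is still locally $1$-trigonometrically convex, this should give the inequality $\sum c_i(\cdots)\ge0$ for a function negative at all the $t_i$.

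Handling the degenerate cases needs care: gaps exactly $\pi$, and when the $t_i$ wrap around the endpoint $\alpha$, where $\widetilde k$ is not periodic. I would treat the endpoint by using that $L_\alpha=L_{\alpha-2\pi}$ is a double support line of the nest.
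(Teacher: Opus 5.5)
Your proposal is correct and is essentially the paper's proof: a nest comes from Lemma~\ref{existence of a nest} at a maximum point of $h$. Inequality \eqref{R1} then follows by contradiction from the two antipodal (or three balanced) contact directions of $D_\alpha$ with $N_\alpha$, at which the locally $1$-trigonometrically convex function $\widetilde k+r_\alpha\cos(\cdot-\theta_\alpha)$ would be negative. The only difference is that you invoke item (3) of Theorem~\ref{PartI} directly, whereas the paper rebuilds $k$ as the indicator of $G/W_\Lambda$ via Levin's theorem (which forces the extra term $a\cos t+b\sin t$ when $\rho\in\mathbb N$).

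The step you left open is short and needs no wrap-around treatment, because periodicity of $\widetilde k$ never enters. The nest property is used only through Proposition~\ref{crit_nest}(iii), to place all contact directions in $[\alpha-2\pi,\alpha)$ with $\widetilde h(t_i)=\tau_\alpha(t_i)$. After that you only need local trigonometric convexity on this interval. Order the three directions $\gamma_1<\gamma_2<\gamma_3$, so that $\gamma_2-\gamma_1<\pi$, $\gamma_3-\gamma_2<\pi$ and $\gamma_3-\gamma_1>\pi$. Property F makes the function negative on all of $[\gamma_1,\gamma_2]$, which contains $\gamma_3-\pi$, and Property C applied to the pair $\gamma_3-\pi,\gamma_3$ gives the contradiction. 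This same argument also proves your weighted inequality $\sum c_i f(t_i)\ge 0$.
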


\begin{proof}

 Without loss of generality we suppose that $h(0)=\displaystyle\max_{t\in[0,2\pi\rho]} h(t).$ Then, by Lemma \ref{existence of a nest} (we take $\alpha=\beta=0$), 
    there exists $t^*\in[0,2\pi/\rho]$ such that  $N_{\rho t^*}=K_{(\rho t^*-2\pi,\rho t^*)}$ is a nest.

 Let us now prove inequality \eqref{R1}. Suppose  that for some $\alpha$, such that $N_\alpha$ is a nest, we have $\sigma_U(\Delta_h) < R_\alpha.$ { Then, by definition of the critical uniqueness type, for $\sigma_\alpha: =(\sigma_U(\Delta_h)+R_\alpha)/2$  there exists an entire function } $G\in\mathcal E_{ \rho,\sigma_\alpha}\setminus\{0\}$, and a set $\Lambda \in AD(\Delta_h,\rho)$ such that $G$ vanishes on $\Lambda$  with multiplicities.

 Put
\begin{equation}\label{W}
    W_{\Lambda}(z)=\prod_{\lambda_k\in\Lambda} H\left(\frac{z}{\lambda_k};[\rho]\right), 
    \end{equation}
where 
$$H(w, d) :=(1-w) \exp(w+\frac{w^2}{2}+\ldots+\frac{w^d}{d})$$
is the Hadamard  canonical factor.
The function $W_{\Lambda}(z)$ is an entire function of completely regular growth of order $\rho$  with {\it exact} zero set $\Lambda$. In the case $\rho\notin\mathbb N$, the indicator of $W_\Lambda$ coincides with $h,$ while  for the case $\rho\in\mathbb N$ this indicator $h_{W_\Lambda}$ coincides with $h$ only up to some $\rho$-trigonometric function (see \cite[Chapter\ II, Sect.~1, Theorem 1, Theorem 2]{Levin}):
$$h_{W_\Lambda}(t)=h(t)+a\cos\rho t+b\sin\rho t.$$
The function $G/W_{\Lambda}$ is an entire function of order $\rho$ whose indicator is some $\rho$-trigonometrically convex function $k$.
Applying  Levin's theorem on the indicator of the product of two entire functions \cite[Chapter III, \S 4, Theorem 5]{Levin}, we conclude that 
$$h_G(t):=\limsup_{
r\to\infty}\frac{\log|G(re^{it})|}{r^\rho}=h_{W_{\Lambda}}(t)+k(t).$$
Due to our assumption, we have { $h_G(t)\le\sigma_\alpha<R_\alpha,\;\;t\in [0,2\pi).$} On the other hand,  from  the definition of the nest and of the number $R_\alpha,$ it follows that:

(a) either there are two points $\gamma_1,\gamma_2\in [\alpha-2\pi,\alpha]$  such that $\gamma_2=\gamma_1+\pi$ and
$$\widetilde h(\gamma_j)= R_\alpha +r_\alpha\cos( \gamma_j-\theta_\alpha),$$

(b) or there are
three points $\gamma_j\in[\alpha-2\pi,\alpha],$  with $0\in{\rm conv}\{e^{i\gamma_1},e^{i\gamma_2},e^{i\gamma_3}\},$
such that 
$$\widetilde h(\gamma_j)= R_\alpha +r_\alpha\cos( \gamma_j-\theta_\alpha).$$

Therefore, we have 
$$\widetilde k(\gamma_j)=\widetilde h_G(\gamma_j)-\widetilde h( \gamma_j)-a\cos \gamma_j-b\sin \gamma_j< -r_\alpha\cos( \gamma_j-\theta_\alpha)-a\cos \gamma_j-b\sin \gamma_j,$$
(here, for $\rho\notin \mathbb N$ we put $a=b=0$).

Note that the function 
$\widetilde k^*(t):=\widetilde k(t)+r_\alpha\cos( t-\theta_\alpha)+a\cos t+b\sin t$
is trigonometrically convex on the interval $[\alpha-2\pi,\alpha].$

In case (a), we have $\widetilde k^*(\gamma_1)+\widetilde k^*(\gamma_1+\pi)<0$ which contradicts Property C of trigonometrically convex functions (see Subsection \ref{rho-prelim-prop}).

In case (b), there are three points $\alpha-2\pi\le \gamma_1<\gamma_2< \gamma_3\le\alpha$ such that $0<\gamma_2-\gamma_1<\pi, 0<\gamma_3-\gamma_2<\pi, \gamma_3-\gamma_1>\pi.$ For these points we have
     $\widetilde k^*(\gamma_j)<0.$
     From the fact that   $\widetilde k^*(t)$ is trigonometrically convex, by Property F in Subsection \ref{rho-prelim-prop}, it follows that  if $\widetilde k^*(\gamma_1)<0$ and $\widetilde k^*(\gamma_2)<0$, where $0<\gamma_2-\gamma_1<\pi$, then $$\widetilde k^*(t)<0\;\;\; \forall t\in [\gamma_1,\gamma_2]. $$
     Note that $\gamma_3-\pi\in [\gamma_1,\gamma_2].$ Then we have
     $$\widetilde k^*(\gamma_3-\pi)+\widetilde k^*(\gamma_3)<0,$$
 and again, a contradiction arises. 
The theorem is proved.
    
    \end{proof}

\subsection{Upper bound for the critical uniqueness type.}\label {Sect.upper}

Now, we are going to show  that the lower bound in Theorem \ref{LN} is sharp, and the maximal local circumradius
$R^*_{\rm loc}=\max\{R_\alpha:N_\alpha {\rm \; is \; a \; nest\; for\; }K_h\}$ indeed exists. That will prove Theorem \ref{T2}.

The sharpness of inequality \eqref{R1} follows from the next, slightly more general theorem.

\begin{theorem}
    \label{upper}
    Let $h\in TC_\rho$ be an (upper) indicator of some entire function $F\in \mathcal E_{\rho, \sigma}$:
    $$ h(t)=\limsup_{
r\to\infty}\frac{\log|F(re^{it})|}{r^\rho}.$$
Then there exists a multiplicator $G\in \mathcal E_{\rho}$ such that $F\cdot G\in \mathcal E_{\rho, R_{\rm loc}^*},$ where  $R_{\rm loc}^*$ is the maximal local circumradius  for the corresponding  locally-convex curve $K_h$.
\end{theorem}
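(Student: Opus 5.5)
The plan is to build the multiplicator from a $\rho$-trigonometrically convex function $k$ satisfying
\[
\max_t\bigl(h(t)+k(t)\bigr)\le R^*_{\rm loc}.
\]
Given such a $k$, take $G\in\mathcal E_\rho$ of completely regular growth with indicator $k$; for instance, $G=W_\Gamma$ as in \eqref{W} with $\Gamma\in AD(\Delta_k,\rho)$, where for integer $\rho$ we first use the Luntz Theorem to make $\Gamma$ regular and then multiply by $e^{Cz^\rho}$ to fix the trigonometric term. The indicator of a product is at most the sum of the indicators. Hence $h_{FG}\le h+k\le R^*_{\rm loc}$, and so $FG\in\mathcal E_{\rho,R^*_{\rm loc}}$. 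So everything reduces to showing that
\[
\inf_{k\in TC_\rho}\max_t(h+k)\le R^*_{\rm loc},
\]
with the infimum attained; it is then automatically equal to $R^*_{\rm loc}$ by Theorem \ref{LN}.

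For the existence of the minimizer I would invoke Proposition \ref{Cor1} (proved via Theorem \ref{T1}). It gives an elementary $k^*\in TC_\rho$ with widely spaced singular points $s_1,\dots,s_L$ such that $g:=h+k^*$ satisfies $\max g=\sigma_U(\Delta_h)=:\sigma$. Since adding $k^*$ cannot lower the critical type, we get $\sigma_U(\Delta_g)=\sigma=\max g\ge\sigma_Z(\Delta_g)$, hence $\sigma_U(\Delta_g)=\sigma_Z(\Delta_g)$. By Theorem \ref{PartI}(5), $\widehat g$ is then locally $\rho$-balanced. So there is a triple $\alpha\le\beta\le\gamma$ of maximum points of $\widehat g$ as in \eqref{loc-bal}; for $\rho\in\mathbb N$ I pass from $h$ to an equivalent function, which by Lemma \ref{shift} only translates $K_h$.

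The goal is to show that some nest of $K_h$ has circumradius at least $\sigma$. Between two consecutive singular points of $k^*$ (an interval of length $>\pi/\rho$) we have $k^*=A\cos\rho(t-\theta)$. In the stretched variable this adds a $1$-trigonometric term, so on that interval the arc of $K_g$ is the corresponding arc of $K_h$ translated by the vector $Ae^{i\rho\theta}$. The steps are as follows.

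\begin{enumerate}
\item Apply Lemma \ref{existence of a nest} to $\widehat g$, with endpoints the points of the balanced triple, to produce a nest $N$ of $K_{\widehat g}$.
\item Check that on the parameter range of $N$ the curve contains support lines at distance $\sigma$ from a common centre in balanced directions; these come from the points of the triple. Hence $R(N)\ge\sigma$.
\item Transport $N$ back to a nest of $K_h$ with the same circumradius.
\end{enumerate}

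The main obstacle is step 3. A translation preserves nests and circumradii only if the whole parameter window of length $2\pi$ (in stretched variable) lies in a region where $k^*$ is a single trigonometric piece, and this is not automatic. I would first use the minimality of $k^*$ to rule out a singular point of $k^*$ inside the relevant window. If such a point existed, one should be able to modify $k^*$ near it, in the spirit of Lemma \ref{lemma3}, and strictly decrease $g$ near its maximum points there, contradicting either the balance or the choice of $k^*$. Alternatively, one could choose $k^*$ with the minimal number of singular points.

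If this fails, the fallback is a direct gluing construction. For each nest $N_\alpha$, inequality \eqref{Nest} shows that adding $-r_\alpha\cos(t-\theta_\alpha)$ caps $\widetilde h$ by $R_\alpha\le R^*_{\rm loc}$ on $[\alpha-2\pi,\alpha]$. One would then cover a period by such windows and glue the corresponding trigonometric pieces by taking maxima, using Property B. The point to check is that at each junction the kink is convex, via the nest condition $\widetilde h'_-(\alpha)\le\widetilde h'_+(\alpha-2\pi)$.
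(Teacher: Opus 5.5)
Your reduction is the same as the paper's. It suffices to find $k\in TC_\rho$ with $h+k\le R^*_{\rm loc}$ and take $G$ of completely regular growth with indicator $k$; by Theorem~\ref{LN} this amounts to exhibiting a nest of $K_h$ with circumradius $\sigma=\sigma_U(\Delta_h)$. The local balance of $g=h+k^*$ and your steps 1--2 are also fine, but they produce a nest of $K_{g}$, not of $K_h$.

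\textbf{The gap is step 3.} You flag it as the crux and leave it open, and the repair you suggest cannot work. A nest window has length $2\pi/\rho$ in the original variable, while wide spacing only says that consecutive singular points of $k^*$ are more than $\pi/\rho$ apart. For $\rho\in(1/2,1)$ every window has length $>2\pi$, so it contains a singular point unless $k^*\equiv 0$, i.e.\ unless $\sigma_U=\sigma_Z$. In Example~\ref{ex} ($\rho=2$) any admissible $k^*$ has exactly three singular points: one or two are excluded by $\int e^{2it}\,d\Delta_{k^*}(t)=0$ together with wide spacing, and none is excluded by $\sigma_U<\sigma_Z$. So all gaps are $<\pi$ and every window meets a singular point. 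No minimality argument, and no minimization of the number of singular points, can remove them, and a nest of $K_g$ is then not a translate of a nest of $K_h$.

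The gluing fallback is not a proof either. The caps restricted to their windows have a $\rho$-trigonometrically convex maximum only if consecutive caps cross inside the overlap of their windows, and that is exactly what needs proof. For general $\rho$ you have no covering of $K_h$ by nests with this property; the paper obtains such coverings only for $\rho\in(1/2,1)$ and for non-balanced $\rho=2$ in Section~\ref{sec4}. The nest condition $\widetilde h'_-(\alpha)\le\widetilde h'_+(\alpha-2\pi)$ does not address this.

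\textbf{What is missing are two ideas.}

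First, the balanced triple $\alpha\le\beta\le\gamma$ of maximum points of $g$ can be chosen inside a single gap $(\alpha_j,\alpha_{j+1})$ between consecutive singular points of $k^*$ (an ``anchored'' subarc, Lemma~\ref{loc-balK_j}). This does not follow from local balance, since the triple may straddle a singular point. The paper proves it by contradiction. From the functions $\tau_{[\alpha,A,\beta,B]}$ it builds a global perturbation of $k^*$ that is strictly negative at every maximum point of $g$. At a shared singular point the auxiliary sinusoids are truncated there, and their amplitudes are tuned so that the derivative jump stays non-negative. This contradicts the minimality of $k^*$.

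Second, no nest has to be transported. Suppose $k^*=P\cos\rho(t-t_0)$ on $[\alpha_j,\alpha_{j+1}]$. Then $k^*-P\cos\rho(t-t_0)$ is $\rho$-trigonometrically convex on $[\alpha_j-\pi/\rho,\alpha_{j+1}+\pi/\rho]$, because by wide spacing the neighbouring pieces are single sinusoids there. It vanishes on the gap, hence is non-negative. So $h\le\sigma-P\cos\rho(t-t_0)$ on an interval containing $[\gamma-2\pi/\rho,\alpha+2\pi/\rho]$, with equality at $\alpha,\beta,\gamma$. Since the subtracted term is $2\pi/\rho$-periodic, the intermediate-value argument of Lemma~\ref{existence of a nest} applies to $h$ itself. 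It yields a nest of $K_h$ lying in a disk of radius $\sigma$ and touching its boundary in balanced directions, hence of circumradius $\sigma$ (Lemma~\ref{max nest exists}).
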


\begin{rem}
     Note that  this theorem does not require $F$ to be a function of completely regular growth. We impose no restriction on  the zero distribution of $F$.
\end{rem}

Theorem \ref{upper} follows, in its turn,  from the following proposition.
\begin{proposition}\label{P4}
    Given $h\in TC_\rho$, there exists the maximal local circumradius for the corresponding  locally convex curve $K_h$
     $$R_{\rm loc}^*=\max\{R_\alpha:N_\alpha {\rm \; is \; a \; nest\; for\; }K_h\}.$$  
    Furthermore, there exists a function $\tau\in TC_\rho$, such that 
    $$h(t)+\tau(t)\le R^*_{\rm loc},\;\;\;\forall t\in [0,2\pi].$$    
    \end{proposition}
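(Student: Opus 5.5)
Two things have to be shown: the supremum in \eqref{R2} is attained, and some $\tau\in TC_\rho$ satisfies $h+\tau\le R^*_{\rm loc}$. I will do them in that order.

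\textbf{Attainment.} By Theorem \ref{LN} the set $\mathcal N=\{\alpha\in[0,2\pi\rho]: N_\alpha \text{ is a nest}\}$ is nonempty. I will show it is closed. The condition $\widetilde h(\alpha)=\widetilde h(\alpha-2\pi)$ is closed because $\widetilde h$ is continuous. The inequality $\widetilde h'_-(\alpha)\le \widetilde h'_+(\alpha-2\pi)$ is harder, since one-sided derivatives are only semicontinuous. For it I will use characterization (ii) of Proposition \ref{crit_nest}: the integral $\int_{\alpha-2\pi}^{\alpha} e^{i(t-\alpha)}\,d\widetilde\Delta_\alpha(t)$ must be a nonpositive real number. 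Along a sequence $\alpha_n\to\alpha$, jumps of $\Delta$ at the endpoints may switch between the two ends of the interval. This is handled because $\widetilde h'_+$ is right-continuous and $\widetilde h'_-$ is left-continuous. Next, $\alpha\mapsto R_\alpha$ is continuous on $\mathcal N$: nearby subarcs $K_{(\alpha-2\pi,\alpha)}$ are close in the Hausdorff metric, since the closure of $N_\alpha$ is a closed convex-boundary curve whose support function is $k_\alpha$, and $k_{\alpha_n}\to k_\alpha$ uniformly by (iii). The circumradius is Hausdorff-continuous, so the maximum is attained.

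\textbf{Construction of $\tau$.} I plan a ``local surgery'', modelled on Lemmas \ref{lemma2}--\ref{lemma3} and guided by the lower-bound argument of Theorem \ref{LN}. On the stretched variable I want $\widetilde\tau$ with $\widetilde h+\widetilde\tau\le R:=R^*_{\rm loc}$, where $\widetilde\tau$ is locally $1$-trigonometrically convex and $2\pi\rho$-periodic.

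On a nest $N_\alpha$, \eqref{Nest} shows that adding $-r_\alpha\cos(t-\theta_\alpha)$, the term that recentres $D_\alpha$ at the origin, brings $\widetilde h$ below $R_\alpha\le R$ on the window $[\alpha-2\pi,\alpha]$. This recentring is a $1$-trigonometric term, so it is harmless locally.

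The plan is to cover the circle $[0,2\pi\rho)$ by finitely many such windows. The windows will be glued with functions of the form $\max\{\cdots\}$, which stays in $TC$ by Property B. At the junctions I will insert pieces $A\sin(t-\gamma)$ that vanish at the junctions, as in Lemma \ref{lemma2}, to keep the recentring corrections compatible.

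Concretely, I will proceed by induction or iteration. Start from a point $t_0$ where $\widetilde h$ attains a value above $R$; if there is none, take $\tau=0$. Lemma \ref{existence of a nest}, applied to suitably chosen level sets of $\widetilde h+\widetilde\tau$, produces a nest containing the problematic region. Then I will replace $\widetilde\tau$ there by the recentring correction, truncated by $\max$ with sine arcs so that it stays periodic. Equivalently, and more cleanly, I could argue by contradiction using compactness as in the proof of Theorem \ref{T1}. Let $\sigma_U=\inf_k\max(h+k)$ be attained by an elementary minimizer $k^*$ with widely spaced singularities (Proposition \ref{Cor1}), and suppose $\sigma_U>R$. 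The set $M$ of maximum points of $h+k^*$ cannot be locally balanced relative to any window, or else we could lower the maximum further. Then Lemma \ref{existence of a nest} applied to $g=h+k^*$ yields a nest of $K_g$. That nest is a translate/recentring of a nest of $K_h$, because $k^*$ is trigonometric on windows of length less than $2\pi$ between its widely spaced singular points. Its circumradius is at least $\sigma_U$ by the balancing at the maximum points, which contradicts $R_\alpha\le R<\sigma_U$.

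\textbf{Main obstacle.} The hard step is this last transfer. Nests of $K_{h+k^*}$ must correspond to nests of $K_h$ with the same circumradius. This needs $k^*$ to be a single $1$-trigonometric term on the whole nest window of stretched length $2\pi$. Wide spacing only gives gaps longer than $\pi$. So I expect to have to refine the choice of $k^*$, sliding singular points using Lemma \ref{lemma3}, so that some nest window avoids all singularities of $k^*$. I would try that first, and fall back on the direct gluing construction if it fails.
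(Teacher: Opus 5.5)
Your contradiction route has the right skeleton, which is essentially the paper's, but both of its load-bearing steps are missing.

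First, your sentence about local balancedness is inverted, and once read correctly it is unproved. What you need is that the maximum set $m^*$ of $h+k^*$ contains a triple $\alpha,\beta,\gamma$ as in \eqref{anch} lying inside a \emph{single} gap $(\alpha_j,\alpha_{j+1})$ between consecutive singular points of $k^*$. Local $\rho$-balancedness of $h+k^*$ (Theorem~\ref{PartI}(5)) does not give this, because the balancing triple may straddle a singular point. The paper proves it in Lemma~\ref{loc-balK_j} by an explicit perturbation argument:
\begin{enumerate}
\item if no gap is anchored, cover $m^*$ by short closed intervals and subtract small sine bumps of height $d$ around them;
\item at a cluster straddling $\alpha_j$, use bumps truncated at $\alpha_j$, with amplitudes $b_j,c_j$ chosen so that the derivative jump at $\alpha_j$ equals $A_j-d\rho\sin\rho(\widetilde\beta_1^j-\widetilde\gamma^{j-1}_{N_{j-1}})\ge0$;
\item this strictly lowers $\max(h+k)$, contradicting minimality of $k^*$.
\end{enumerate}
Nothing in your proposal supplies this step.

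Second, the remedy you propose for the transfer step cannot work in general. For $\rho\in(1/2,1)$ a nest window has length $2\pi/\rho>2\pi$ in the original variable. It therefore contains a singular point of every nonzero elementary $k^*$, and $k^*\ne0$ is forced whenever $\sigma_U<\sigma_Z$. The missing idea is that you only need a one-sided bound, and that you should produce a nest of $K_h$ directly rather than a nest of $K_{h+k^*}$.
\begin{enumerate}
\item Suppose $k^*=P\cos\rho(t-t_j)$ on $[\alpha_j,\alpha_{j+1}]$. Wide spacing makes $k^*-P\cos\rho(t-t_j)$ a positive multiple of $\sin\rho(\alpha_j-t)$, resp.\ $\sin\rho(t-\alpha_{j+1})$, on the flanks of width $\pi/\rho$.
\item Hence $h\le\sigma_U-P\cos\rho(t-t_j)$ on $[\alpha_j-\pi/\rho,\alpha_{j+1}+\pi/\rho]$, with equality at $\alpha,\beta,\gamma$.
\item This trigonometric term is $2\pi/\rho$-periodic, so $d(t)=h(t-2\pi/\rho)-h(t)$ satisfies $d(\gamma)\le0\le d(\alpha+2\pi/\rho)$.
\item The argument of Lemma~\ref{existence of a nest} then yields a nest $N_{\rho t^*}$ of $K_h$ whose window contains $[\alpha,\gamma]$. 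The window lies in $[\gamma-2\pi/\rho,\alpha+2\pi/\rho]$, which is inside the region where the bound holds because $\gamma-\alpha\ge\pi/\rho$.
\item That nest sits in a disk of radius $\sigma_U$ and touches its boundary at a balanced triple, so $R_{\rho t^*}=\sigma_U$ (Lemma~\ref{max nest exists}).
\end{enumerate}
Together with Theorem~\ref{LN}, this gives attainment of the maximum for free and lets you take $\tau=k^*$. Your compactness argument for attainment is correct but unnecessary. Your direct-gluing alternative is only a sketch: the paper carries out such a gluing only for $\rho\in(1/2,1)$ and $\rho=2$, with case-specific arguments.
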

    
Assuming Proposition \ref{P4}, let us prove Theorem \ref{upper}.
Given $h\in TC_\rho$, the indicator of some $F_h\in \mathcal E_{\rho,\sigma}$, by Proposition \ref{P4} we find $\tau\in TC_\rho$ so that $h(t)+\tau(t)\le R^*_{\rm loc}$.  Then, we  take any function $G$ of completely regular growth with indicator $h_G=\tau$. By the Levin theorem on the indicator of the
product of two entire functions \cite[Chapter III, Sect. 4, Theorem 5]{Levin}, we get the result.

Now we will prove Proposition \ref{P4}. 
\begin{proof}


{
By Proposition \ref{Cor1}, for every $h\in TC_\rho$ there exists an elementary $\rho$-trigonometrically convex function $k^*$ such that 
$$\displaystyle \max_{t\in[0,2\pi]} (h(t)+k^*(t))=\min_{k\in TC_\rho}\max_{t\in[0,2\pi]}\left(h(t)+k(t)\right)=\sigma_U(\Delta),$$ and,
moreover,  the set  of singular points of $k^*$  $\mathcal A=\{\alpha_j\}_{j=1}^L\subset [0,2\pi)$ is widely spaced (see beginning of Subsection \ref{widely spaced}).

\begin{lemma}
    \label{alpha}
  Given $h\in TC_\rho$, let $k^*$ be a corresponding type-minimizing elementary $\rho$-trigonometrically convex function with a set of singular points $\mathcal A$, and let $\Delta$ be the regular measure associated with $h$.  Then, for every $\alpha\in \mathcal A$, we have
    $$h(\alpha)+k^*(\alpha)<\sigma_U(\Delta).$$
     
\end{lemma}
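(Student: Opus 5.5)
The plan is to argue by contradiction using only one-sided derivatives at the point $\alpha$. No global or geometric information is needed. Fix $\alpha\in\mathcal A$ and put $g:=h+k^*$. By Proposition \ref{Cor1},
$$g(t)\le \max_{[0,2\pi]}g=\sigma_U(\Delta)\quad\text{for all }t.$$
Suppose, contrary to the claim, that $g(\alpha)=\sigma_U(\Delta)$. Then $\alpha$ is a point of global maximum of $g$, and I will show this is incompatible with $\alpha$ being a singular point of $k^*$.

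The first step is to record the one-sided derivative information.

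\emph{For $h$.} By Property A in Subsection \ref{rho-prelim-prop}, $h$ has finite one-sided derivatives at $\alpha$ and
$$h'_-(\alpha)\le h'_+(\alpha).$$

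\emph{For $k^*$.} Since $k^*$ is elementary $\rho$-trigonometrically convex and $\alpha$ is one of its singular points, \eqref{h+_} gives the strict inequality
$$(k^*)'_+(\alpha)>(k^*)'_-(\alpha).$$

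Adding the two, $g$ has finite one-sided derivatives at $\alpha$ and
$$g'_+(\alpha)-g'_-(\alpha)=\bigl(h'_+(\alpha)-h'_-(\alpha)\bigr)+\bigl((k^*)'_+(\alpha)-(k^*)'_-(\alpha)\bigr)>0.$$

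The second step uses the maximum at $\alpha$. Since $g(t)\le g(\alpha)$ for all $t$, the difference quotients satisfy
$$\frac{g(\alpha+\delta)-g(\alpha)}{\delta}\le 0\ \ (\delta>0),\qquad \frac{g(\alpha+\delta)-g(\alpha)}{\delta}\ge 0\ \ (\delta<0).$$
Passing to the limit gives $g'_+(\alpha)\le 0\le g'_-(\alpha)$, so $g'_+(\alpha)-g'_-(\alpha)\le 0$. This contradicts the strict positivity from the first step. Hence $g(\alpha)<\sigma_U(\Delta)$.

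I expect no real obstacle here. The only point needing care is that the strict jump of $k^*$ at a singular point is genuinely strict. This is built into the definition of a piecewise $\rho$-trigonometric function (which requires $h'_-(s_k)\neq h'_+(s_k)$) combined with $\rho$-trigonometric convexity, i.e.\ \eqref{h+_}. Also, the maximum of $g$ equals $\sigma_U(\Delta)$ exactly, which is the type-minimizing property supplied by Proposition \ref{Cor1}. Note that the argument does not use the wide spacing of $\mathcal A$. It shows more generally that every singular point of a type-minimizing elementary function lies strictly below the level $\sigma_U(\Delta)$.
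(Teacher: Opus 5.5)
Your proof is correct and follows essentially the same route as the paper. At a maximum point of $g=h+k^*$ one has $g'_+\le 0\le g'_-$; the paper phrases this as both one-sided derivatives vanishing, using $g'_-\le g'_+$. At a singular point $\alpha$ of $k^*$, Property A together with \eqref{h+_} gives $g'_+(\alpha)-g'_-(\alpha)>0$, so $\alpha$ cannot belong to the set where $h+k^*=\sigma_U(\Delta)$.
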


\begin{proof}
    Put 
$$h^*:=h+k^*,$$
\begin{equation}
    \label{m^*}
m^*:=\{t\in [0,2\pi): h^*(t)=\sigma_U(\Delta)\}.\end{equation}

For each point $t\in  m^*$ we have 
$$h^{*'}_-(t)=h^{*'}_+(t)=0,$$
while for each point $\alpha\in \mathcal A$ we have
$$h^{*'}_-(\alpha)=h^{'}_-(\alpha)+k^{*'}_-(\alpha)<h^{'}_+(\alpha)+k^{*'}_+(\alpha)=h^{*'}_+(\alpha).$$
Hence, $\alpha\notin  m^*,\;\forall \alpha\in\mathcal A$.
\end{proof}

Remaining in the context of Lemma \ref{alpha}, let us consider
 a locally convex curve $K^*:=K_{h^*},$ where $h^*=h+k^*$ { as before. For $j=1,\dots, L$ let  us consider a subarc of $K^*$:
$$K^*_j:=K^*_{(\rho\alpha_j,\rho\alpha_{j+1})},$$ where $\mathcal A=\{\alpha_1,\dots,\alpha_L\}$ is the previously defined set  of singular points of $k^*$, and $\alpha_{L+1}=\alpha_1+2\pi$. We will say that a subarc $K^*_j$  is  {\sf anchored}, if there exists a triplet  (where the set $m^*$ is defined by \eqref{m^*}): $$\{\alpha,\beta,\gamma\}\subset (\alpha_j,\alpha_{j+1})\cap m^*=:m^*_j$$ such that}
\begin{equation}\label{anch}
0<\beta-\alpha\le\dfrac\pi\rho;\;\;0\le\gamma-\beta<\dfrac\pi\rho;\;\;\gamma-\alpha\ge\dfrac\pi\rho.
    \end{equation}
In particular, a degenerate 
 case when $\gamma=\beta=\alpha+\dfrac\pi\rho$ is possible.
  
  \begin{rem} We formulate the anchoring condition in the original variable; the corresponding directions on the locally convex curve are $\rho \alpha, \rho\beta,\rho\gamma$.
  \end{rem}


\begin{lemma}
    \label{loc-balK_j}
    Given $h\in TC_\rho$ let $k^*$ be the corresponding type-minimizing elementary $\rho$-trigonometrically convex function, with a widely spaced set of singular points $\mathcal A=\{\alpha_1, \dots, \alpha_L\}\subset [0,2\pi)$.
    Then there exists an anchored subarc of the locally convex curve $K^*:=K_{h+k^*}$:   $$K_j^*=K_{(\rho\alpha_j,\rho\alpha_{j+1})},  $$
     where  $\alpha_{L+1}:=\alpha_1+2\pi$ .\end{lemma}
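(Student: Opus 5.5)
We argue by contradiction. Suppose that none of the subarcs $K_j^*$ is anchored, i.e. for every $j$ the set $m^*_j=(\alpha_j,\alpha_{j+1})\cap m^*$ contains no triple satisfying \eqref{anch}. By Lemma \ref{alpha} the points $\alpha_j$ do not belong to $m^*$, so $m^*=\bigcup_j m^*_j$, and each $m^*_j$ is a compact subset of the open interval $(\alpha_j,\alpha_{j+1})$. The aim is to construct a $\rho$-trigonometrically convex perturbation $\kappa$ such that $h^*+\kappa<\sigma_U(\Delta)$ everywhere. By item (3) of Theorem \ref{PartI}, applied to $h$ with $k^*+\kappa\in TC_\rho$, this contradicts the minimality of $\max(h+k^*)=\sigma_U(\Delta)$. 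Note that $k^*+\kappa\in TC_\rho$ holds automatically if $\kappa\in TC_\rho$, since $TC_\rho$ is a convex cone.

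The key step is local. Fix $j$ with $m^*_j\neq\varnothing$. Failure of \eqref{anch} on $m^*_j$ means the following. Either $m^*_j$ is contained in a closed interval $[a,b]$ of length less than $\pi/\rho$, or the points of $m^*_j$ group into clusters, consecutive clusters are separated by gaps greater than $\pi/\rho$, and each cluster has diameter less than $\pi/\rho$. The second alternative is ruled out: picking $\alpha$, $\beta$ in one cluster and $\gamma$ in the next would produce a triple (gap $>\pi/\rho$ cannot be used as $\gamma-\beta<\pi/\rho$, so this requires care). I would check carefully, by the same combinatorics as in the definition of locally balanced sets from Part I, that non-anchoring forces each ``$\pi/\rho$-connected'' component of $m^*_j$ to have diameter $<\pi/\rho$. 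For each such component contained in $[a,b]$ with $b-a<\pi/\rho$, I choose $\varepsilon>0$ and a function $\kappa_{a,b}(t)=-c\cos\rho(t-\tfrac{a+b}{2})$, multiplied by a small constant $c>0$. This function is negative on a neighbourhood of $[a,b]$ of length $<\pi/\rho$. I then extend it by $0$ outside, inserting kinks exactly as in the construction of $\tau_{[\alpha,A,\beta,B]}$ in Lemma \ref{lemma2}, i.e. $\max\{A\sin\rho(\alpha'-t),B\sin\rho(t-\beta'),0\}$-type pieces, so that the result is an elementary $\rho$-trigonometrically convex function. It is strictly negative near $[a,b]$ and vanishes outside an interval of the form $[a'-\pi/\rho,b'+\pi/\rho]$, on which it is positive but of size $O(c)$.

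Next we globalize. The supports of these bumps must avoid $m^*$ outside their own component, and the positive parts must land where $h^*<\sigma_U(\Delta)$. This is where the widely spaced structure of $\mathcal A$ and the separation between clusters by more than $\pi/\rho$ are used. The positive lobes $[a'-\pi/\rho,a')$ and $(b',b'+\pi/\rho]$ lie in gaps of $m^*$, on which $h^*$ is strictly less than $\sigma_U(\Delta)$ by compactness, possibly after crossing a singular point $\alpha_j$, where $h^*<\sigma_U(\Delta)$ by Lemma \ref{alpha}. Summing the bumps over all components and taking $c$ small, continuity gives $\max(h^*+\kappa)<\sigma_U(\Delta)$: near $m^*$ the sum is negative, and away from $m^*$ there is a positive margin that absorbs the $O(c)$ perturbation. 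The sum of the bumps is again in $TC_\rho$ because $TC_\rho$ is a cone, yielding the contradiction.

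The main obstacle is the combinatorial claim in the key step. We must show that, inside a single arc $(\alpha_j,\alpha_{j+1})$, the absence of an anchoring triple forces every component of $m^*_j$ to fit in an open window shorter than $\pi/\rho$, with the positive lobes of the corresponding bumps avoiding other points of $m^*$. Components in different arcs could interact across $\alpha_j$. I would first try to handle this by using Property C together with the negative jump structure of $k^*$ at $\alpha_j$. Failing that, I would modify $k^*$ at $\alpha_j$ directly via Lemma \ref{lemma3}-style erasure. The global maximum set could also a priori be anchored only across different arcs, and that is precisely the situation the lemma is meant to exclude.
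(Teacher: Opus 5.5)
Your plan has a genuine gap: you look for the improving perturbation $\kappa$ inside $TC_\rho$, and no such $\kappa$ can exist. The cone property you invoke already shows this, whatever is assumed about anchoring. Since $k^*+\kappa\in TC_\rho$ and $k^*$ minimizes $\max(h+k)$ over $TC_\rho$ (item (3) of Theorem~\ref{PartI}, Proposition~\ref{Cor1}), one always has $\max(h^*+\kappa)\ge\sigma_U(\Delta)$ for every $\kappa\in TC_\rho$. So your construction must break somewhere. It breaks at the claim that the positive lobes ``lie in gaps of $m^*$, possibly after crossing a singular point $\alpha_j$''.

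Under the no-anchoring hypothesis, the points of $m^*$ inside each arc $(\alpha_j,\alpha_{j+1})$ do split into groups of diameter $<\pi/\rho$ separated by gaps $>\pi/\rho$. This is the routine part: two points at distance exactly $\pi/\rho$ already give a degenerate triple, and otherwise a chain argument works. However, the last group before $\alpha_j$ and the first group after it may be at distance $\le\pi/\rho$ (a ``cluster''). Then the lobe of one bump lands on the other group, i.e.\ on points where $h^*=\sigma_U(\Delta)$.

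This situation is forced, not exceptional. As the paper notes, $h^*$ is locally $\rho$-balanced by item (5) of Theorem~\ref{PartI}, since $\sigma_U=\sigma_Z$ for $\Delta+\Delta_{k^*}$. Hence $m^*$ contains a triple as in \eqref{anch}, and without an anchored subarc such a triple must straddle some $\alpha_j$. By Properties F and C, no $\rho$-trigonometrically convex function can be negative at all three points of such a triple. So the case you defer to your last paragraph is the whole content of the lemma; the combinatorial step you call the main obstacle is not.

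The missing idea is that the perturbation $\omega$ need not itself be $\rho$-trigonometrically convex; only $k^*+\omega$ must be. At every $\alpha_j$, $k^*$ has a strictly positive derivative jump $A_j=k^{*\prime}_+(\alpha_j)-k^{*\prime}_-(\alpha_j)$, and $\alpha_j\notin m^*$ by Lemma~\ref{alpha}. This jump is a budget that lets $\omega$ have a concave kink at $\alpha_j$.

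In a cluster, let the left group lie in $(\widetilde\beta',\widetilde\gamma)$ and the right one in $(\widetilde\beta,\widetilde\gamma')$, with $\widetilde\gamma<\alpha_j<\widetilde\beta$ and $\widetilde\beta-\widetilde\gamma<\pi/\rho$. The paper takes the two bumps $\tau_{[\widetilde\beta',d,\widetilde\gamma,c_j]}$ and $\tau_{[\widetilde\beta,b_j,\widetilde\gamma',d]}$, truncated at $\alpha_j$, with $b_j=d\sin\rho(\alpha_j-\widetilde\gamma)$ and $c_j=d\sin\rho(\widetilde\beta-\alpha_j)$. These bumps agree at $\alpha_j$, are negative on both groups, and are bounded by $d$. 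The derivative jump of the glued function at $\alpha_j$ is $-d\rho\sin\rho(\widetilde\beta-\widetilde\gamma)\ge-d\rho\ge-A_j$ once $d\le A_j/\rho$. Hence $k^*+\omega\in TC_\rho$ even though $\omega\notin TC_\rho$, and this gives the contradiction.

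Your suggested fallback, erasing kinks in the spirit of Lemma~\ref{lemma3}, goes in the opposite direction. The kinks of $k^*$ are exactly what makes the improvement possible.
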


    \begin{proof}
        
Suppose that for every $j=1,\dots, L$, the subarc $K^*_j$ is not anchored. 
That is, every  set 
$m^*_j$ can be covered by a finite union of {\it closed} disjoint intervals $T^j_{k}:=[\beta^j_{k},\gamma^j_{k}] \subset (\alpha_j,\alpha_{j+1})$: 
 $$m^*_j\subset \bigcup_{k=1}^{N_j}T^j_{k},$$
  such that {(we put $T_k^0=[\beta^0_{k},\gamma^0_{k}]:=T_k^L$, $N_0:=N_L$)} 
  \begin{enumerate}
  \item $\beta^j_{k},\gamma^j_{k}\in m^*_j$, $k=1,\dots, N_j$;
      \item $\gamma^j_{k}-\beta^j_{k}\in[0,\pi/\rho)$,  $k=1,\dots, N_j$;
      \item $\beta^j_{k+1}-\gamma^j_{k}>\pi/\rho$, $k=1,\dots, N_j-1$.
  \end{enumerate}

We will say that  $T_k^j$ is a {\sf well separated  interval} if
\begin{itemize}
    \item[-] either $1<k<N_j$;
    \item[-] or, if $k=1,$ then $\beta^j_1-\gamma^{j-1}_{N_{j-1}}> \pi/\rho$;
    \item[-]  or, if $k=N_j$, then  $\beta_1^{j+1}-\gamma^j_{N_j}> \pi/\rho$.
\end{itemize} 
 If  $\beta^j_1-\gamma^{j-1}_{N_{j-1}}\le \pi/\rho$, we will say that the pair of intervals $T^{j-1}_{N_{j-1}}$ and $T^j_1$ is a {\sf cluster}.

 \begin{rem}
        Note that, despite the similarity between the definitions of a locally $\rho$-balanced set (see Section \ref{Recall}) and the definition of an anchored subarc, Lemma \ref{loc-balK_j} does not follow immediately from  Theorem \ref{PartI}. That is, the fact that the function $h+k^*$ is locally $\rho$-balanced does not imply the fact that the curve $K^*$ has an anchored subarc, since the "balancing set"  may be spread across two subarcs $K^*_j,$ $K^*_{j+1}$.
    \end{rem}

    { Based on  Theorem \ref{PartI} (item (5))  together with Proposition \ref{Cor1} and the definition of the type-minimizing measure \eqref{type-min}, we conclude  that the function $h^*=h+k^*$
is locally $\rho$-balanced.}
Together with the assumption that every subarc $K^*_j$ is not anchored, we conclude that there exists (at least one)  value of $j$, such that the pair of intervals $T^{j-1}_{N_{j-1}}$ and $T^j_1$  is a  cluster.

 Now, let us fix an auxiliary constant $\eps$, which will play a role of "safety cushion" in our construction.
 We choose $\eps$ small enough so that the set $T^\eps$, the $\eps$-neighborhood of the set $\displaystyle T=\bigcup_{j=1}^L\bigcup_{k=1}^{N_j} T_k^j$, consists of $\displaystyle\sum_{j=1}^L N_j$ open intervals $$T^\eps=\bigcup_{j=1}^L\bigcup_{k=1}^{N_j}(\widetilde\beta_k^j;\widetilde\gamma_k^j),$$ such that
 \begin{enumerate}
 \item $(\widetilde\beta_k^j;\widetilde\gamma_k^j)\supset~T_k^j$;
     \item $\widetilde\gamma_k^j-\widetilde\beta_k^j<\pi/\rho,\;\;j=1,\dots, L, k=1.\dots, N_j $;
     \item $\widetilde\beta_{k+1}^j-\widetilde\gamma_k^j>\pi/\rho, \;\;j=1,\dots, N_j-1$;
          \item $\widetilde\beta_1^j-\widetilde\gamma_{N_{j-1}}^{j-1}>\pi/\rho$, if  $T_1^j$ and $T_{N_{j-1}}^{j-1}$ are well separated ;
     \item $\widetilde\beta_1^j-\alpha_j>0$ and $\alpha_j-\widetilde\gamma_{N_{j-1}}^{j-1}>0$.
 \end{enumerate}

Put $\displaystyle H_\eps:=\max_{t\notin  {T^\eps}}h^*(t).$ Clearly, $\sigma_U(\Delta)-H_\eps>0$. Put 
$$d:=\min\left\{\dfrac{\sigma_U(\Delta)-H_\eps}2, \frac{A_1}\rho,\frac{A_2}\rho,\dots, \frac{A_L}\rho\right\}>0,$$
where $A_j:={k_+^{*'}}(\alpha_j)-{k_-^{*'}}(\alpha_j)>0$.

Now we use again the  function $\tau_{[\alpha, A,\beta, B]}$ defined in \eqref{tau}.
For every $j,k$ such that an interval $T_k^j$ is well separated , we put $$\tau_k^j(t):=\tau_{[\widetilde\beta_k^j,d,\widetilde \gamma_k^j,d]}(t).$$ {That is
$$
\tau_k^j(t)\!
\\=\!\begin{cases}
\displaystyle d \sin\rho (\widetilde\beta_k^j-t),&\!t\in{[\widetilde\beta_k^j-\frac\pi{\rho},\widetilde\beta_k^j]};
\\
\max\left\{ \displaystyle d\sin\rho (\widetilde\beta_k^j-t),d\sin\rho (t-\widetilde\gamma_k^j)\right\},& \!t\in[\widetilde\beta_k^j,\widetilde\gamma_k^j];
 \\
 \displaystyle d\sin\rho (t-\widetilde\gamma_k^j),& \!t\in[\widetilde\gamma_k^j,\widetilde\gamma_k^j+\frac{\pi}{\rho}].
 \end{cases}
 $$}
In case when a pair of intervals $T^{j-1}_{N_{j-1}}$ and $T^j_1$  is a cluster, we put
$$b_j:=d\cdot\sin\rho(\alpha_j-\widetilde\gamma_{N_{j-1}}^{j-1})\in(0,d],$$
$$c_j:=d\cdot\sin\rho(\widetilde\beta_1^j-\alpha_j)\in(0, d].
$$

and define
$$\tau_1^j(t):=\left.\tau_{[\widetilde\beta_1^j, b_j,\widetilde\gamma_1^j,d]}\right|_{[\alpha_j; \widetilde\gamma_1^j+\frac{\pi}{\rho}]},$$
that is
$$\tau_1^j(t)\! 
=\!\begin{cases}
\displaystyle b_j \sin\rho (\widetilde\beta_1^j-t),&\!t\in{[\alpha_j,\widetilde\beta_1^j]};
\\
\max\left\{ \displaystyle b_j \sin\rho (\widetilde\beta_1^j-t),d\sin\rho (t-\widetilde\gamma_1^j)\right\},& \!t\in[\widetilde\beta_1^j,\widetilde\gamma_1^j];
 \\
 \displaystyle d\sin\rho (t-\widetilde\gamma_1^j),& \!t\in[\widetilde\gamma_1^j,\widetilde\gamma_1^j+\frac{\pi}{\rho}],
 \end{cases}
 $$
and 
$$\tau_{N_{j-1}}^{j-1}(t):=\left.\tau_{[\widetilde\beta_{N_{j-1}}^{j-1}, d,\widetilde\gamma_{N_{j-1}}^{j-1},c_j]}\right|_{[\widetilde\beta_{N_{j-1}}^{j-1}-\frac{\pi}{\rho}; \alpha_j]}, $$
that is {
$$\tau_{N_{j-1}}^{j-1}(t)\! {
=\!\begin{cases}
\displaystyle d \sin\rho (\widetilde\beta_{N_{j-1}}^{j-1}-t),&\!t\in{[\widetilde\beta_{N_{j-1}}^{j-1}-\frac{\pi}{\rho},\widetilde\beta_{N_{j-1}}^{j-1}]};
\\
\max\left\{ \displaystyle d \sin\rho (\widetilde\beta_{N_{j-1}}^{j-1}-t),c_j\sin\rho (t-\widetilde\gamma_{N_{j-1}}^{j-1})\right\},& \!t\in[\widetilde\beta_{N_{j-1}}^{j-1},\widetilde\gamma_{N_{j-1}}^{j-1}];
 \\
 \displaystyle c_j\sin\rho (t-\widetilde\gamma_{N_{j-1}}^{j-1}),& \!t\in[\widetilde\gamma_{N_{j-1}}^{j-1},\alpha_{j}].
 \end{cases}
 }$$
So, in the cluster case, the auxiliary functions are truncated at their common singular point $\alpha_j$. On the other side they are kept only up to their natural zeroes, namely $\widetilde\gamma_1^j+\frac{\pi}{\rho}$ and $\widetilde\beta_{N_{j-1}}^{j-1}-\frac{\pi}{\rho}$. This prevents these auxiliary functions from entering the regions corresponding to other intervals $T_k^j$, where they could become positive and destroy the strict decrease on $m^*$.

{
Note that for all $1\le j\le L$, $1\le k\le N_j$, we have
\begin{itemize}
\item $\tau_k^j(t)\le 0,
\qquad
\forall t\in T^\eps\cap {\rm Dom}(\tau_k^j),
$
\item $
\tau_k^j(t)<0,
\qquad
t\in T_k^j,
$
\item
$
\tau_k^j(t)\le d,
\qquad
t\in {\rm Dom}(\tau_k^j).
$
\end{itemize}
For every $j=1,\dots,L$, we define the perturbation $\omega_j$ on
$(\alpha_j,\alpha_{j+1})$ as follows. If at a point $t$ there are auxiliary
functions $\tau_k^j$ whose domains contain $t$, we put
$$
\omega_j(t):=
\max\{\tau_k^j(t):1\le k\le N_j,\ t\in {\rm Dom}(\tau_k^j)\}.
$$
If no such function is defined at $t$, we put
$$
\omega_j(t):=0.
$$
Now define
$$
\widehat k(t):=k^*(t)+\omega_j(t),
\qquad
t\in(\alpha_j,\alpha_{j+1}).
$$

By Property B of $\rho$-trigonometrically convex functionc, and by the construction of the functions $\tau_k^j$,
$\widehat k$ is $\rho$-trigonometrically convex on each interval
$(\alpha_j,\alpha_{j+1})$.}

To show that $\widehat k$
is elementary $\rho$-trigonometrically convex on $[0,2\pi)$ we only need to check that $\widehat k'_+(\alpha_j)\ge \widehat k'_-(\alpha_j)$ for $j=1,\dots, L$.

First suppose that the pair of intervals
$T^{j-1}_{N_{j-1}}$ and $T_1^j$ is well separated. Then, by the choice of
$d$, the possible contribution of the auxiliary functions to the derivative jump at
$\alpha_j$ cannot exceed $d\rho$ in the negative direction. Hence
\[
\widehat k'_+(\alpha_j)-\widehat k'_-(\alpha_j)
\ge A_j-d\rho\ge0.
\]

It remains to consider the cluster case. Then, by the construction,
\[
\widehat k'_+(\alpha_j)-\widehat k'_-(\alpha_j)
=
A_j+\left(\tau_1^{j}\right)'_+(\alpha_j)
-\left(\tau_{N_{j-1}}^{j-1}\right)'_-(\alpha_j).
\]
Using the definitions of $b_j$ and $c_j$, we obtain
\begin{align*}
    \widehat k'_+(\alpha_j) - \widehat k'_-(\alpha_j)
    &=
    A_j
    - b_j \rho \cos \rho(\widetilde\beta_1^j - \alpha_j)
    - c_j \rho \cos \rho(\widetilde\gamma_{N_{j-1}}^{j-1} - \alpha_j)
    \\
    &=
    A_j
    - d \rho \sin \rho(\alpha_j - \widetilde\gamma_{N_{j-1}}^{j-1})
      \cos \rho(\widetilde\beta_1^j - \alpha_j)
    \\
    &\quad
    - d \rho \sin \rho(\widetilde\beta_1^j - \alpha_j)
      \cos \rho(\widetilde\gamma_{N_{j-1}}^{j-1} - \alpha_j)
    \\
    &=
    A_j
    - d \rho \sin \rho(\widetilde\beta_1^j
    - \widetilde\gamma_{N_{j-1}}^{j-1})
    \ge 0.
\end{align*}
Thus the derivative jump of $\widehat k$ is non-negative at every point
$\alpha_j$. Hence $\widehat k$ is an elementary
$\rho$-trigonometrically convex function on $[0,2\pi)$.

Since the intervals $T_k^j$ cover $m_j^*$, and since the corresponding
active function $\tau_k^j$ is strictly negative on $T_k^j$, the added
perturbation is strictly negative at every point of $m^*$.
Thus, we have an elementary $\rho$-trigonometrically convex function $\widehat k$ such that
\begin{itemize}
    \item $\widehat k(t)<k^*(t),\;\;\forall t\in m^*$;
      \item $\widehat k(t)\le k^*(t),\;\;\forall t\in T^\eps$;
    \item $\widehat k(t)-k^*(t)\le d\le \dfrac12\left(\sigma_U(\Delta)-\displaystyle\max_{t\notin T^\eps}(h(t)+k^*(t))\right)\;\;\forall t\notin T^\eps$.
    \end{itemize}
  {  It follows that 
    $$\max_{t\in[0,2\pi)}\left(h(t)+\widehat k(t)\right)<\max_{t\in[0,2\pi)}\left(h(t)+ k^*(t)\right),$$
    which contradicts the definition of $k^*$.
This contradiction shows that there exists an anchored subarc $K_j^*.$
 }
}    \end{proof}

Our next aim is to show that each anchored subarc of $K^*$ corresponds to a nest $N_\alpha$ of the locally convex curve $K$  with circumradius $R(N_\alpha)=\sigma_U(\Delta)$.

\begin{lemma}\label{max nest exists}
Given $h\in TC_\rho$, let $k^*$ be a corresponding type-minimizing elementary $\rho$-trigonometrically convex function with widely spaced set of singular points $\mathcal A=\{\alpha_1,\dots,\alpha_L\}$. 
    Suppose that $K^*_j=K^*_{[\rho\alpha_j,\rho\alpha_{j+1}]}$ is an anchored subarc of $K_{h+k^*}$.
    Then there exists $\alpha\in [\rho\alpha_j,\rho\alpha_{j+1}]$ such that  $N_\alpha$ is a nest for $K_h$, { and
$$R(N_\alpha)=\sigma_U(\Delta_h).$$}
\end{lemma}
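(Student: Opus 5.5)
The plan is to pass from $h^*=h+k^*$ to a function that agrees with $h$ up to a single trigonometric term on a suitable window, and then locate the nest inside that window. On $(\alpha_j,\alpha_{j+1})$ the function $k^*$ has no singular points, so it coincides there with one trigonometric piece $k_j(t)=A\cos\rho(t-\theta)$. I will extend $k_j$ to all of $\mathbb R$ by the same formula, which is not periodic in general, and put $g:=h+k_j$. In the stretched variable, $\widetilde k_j(t)=a\cos t+b\sin t$. Exactly as in Lemma \ref{shift} (the proof is local, so periodicity is not needed), every subarc of $K_g$ is the translate by $(a,b)$ of the subarc of $K_h$ with the same parameter interval. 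Both the nest conditions $\widetilde h(\alpha)=\widetilde h(\alpha-2\pi)$ and $\widetilde h'_-(\alpha)\le\widetilde h'_+(\alpha-2\pi)$ are preserved, because $\widetilde k_j$ is $2\pi$-periodic and smooth. Circumradii are also preserved. It therefore suffices to find a nest of $K_g$ with circumradius $\sigma_U(\Delta_h)$.

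\emph{Step 1 (upper bound on a long enough window).} On $(\alpha_j,\alpha_{j+1})$ we have $g=h^*\le\sigma_U(\Delta_h)$. I need $g\le\sigma_U$ on an interval of length at least $2\pi/\rho$ around the anchored triple $\{\alpha,\beta,\gamma\}\subset m_j^*$. This is what Lemma \ref{existence of a nest} requires, with its $\alpha,\beta$ taken from points of $m_j^*$ at which $g=\sigma_U$. Outside $(\alpha_j,\alpha_{j+1})$ we have $k_j\neq k^*$. Since $k^*-k_j$ vanishes on the arc and has positive jumps at $\alpha_j$ and $\alpha_{j+1}$, Lemma \ref{lemma_h'} gives $k^*\ge k_j$ within $\pi/\rho$ of the endpoints. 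Hence $g\le h^*\le\sigma_U$ on $(\alpha_j-\pi/\rho,\alpha_{j+1}+\pi/\rho)$. This interval has length greater than $2\pi/\rho+\pi/\rho$, because the singular points are widely spaced. Taking $\alpha,\gamma\in m_j^*$ with $\gamma-\alpha\ge\pi/\rho$ and $\gamma-\alpha<2\pi/\rho$, I then want to fit the window $[\gamma-2\pi/\rho,\alpha+2\pi/\rho]$ inside this region. \textbf{This is the step I expect to be the main obstacle}: the region may be too short on one side when the triple sits near an endpoint. If so, I would first try a perturbation argument in the spirit of Lemma \ref{loc-balK_j}. If $g>\sigma_U$ somewhere in the needed window, one can lower $k^*$ near the offending singular point while keeping it elementary trigonometrically convex, and this contradicts the minimality of $k^*$.

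\emph{Step 2 (finding the nest).} Apply Lemma \ref{existence of a nest} to $g$, with the two equal-maximum points taken from the anchored triple. This produces $t^*$ such that $N_{\rho t^*}$ is a nest of $K_g$. I will check, and if necessary adjust the choice of points, that the stretched parameter interval $(\rho t^*-2\pi,\rho t^*)$ contains $\rho\alpha,\rho\beta,\rho\gamma$, and that $\rho t^*\in[\rho\alpha_j,\rho\alpha_{j+1}]$.

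\emph{Step 3 (computing the circumradius).} Since $\widetilde g\le\sigma_U$ on the nest window, inequality \eqref{Nest} together with Proposition \ref{crit_nest}(iii) shows that the nest lies in the disk of radius $\sigma_U$ centred at the origin. So its circumradius is at most $\sigma_U$. Conversely, the support lines in directions $\rho\alpha,\rho\beta,\rho\gamma$ are at distance exactly $\sigma_U$ from the origin. By \eqref{anch}, these directions satisfy the three-point configuration (or the diametral one in the degenerate case), so $0\in{\rm conv}\{e^{i\rho\alpha},e^{i\rho\beta},e^{i\rho\gamma}\}$. Any disk containing the nest must therefore have radius at least $\sigma_U$; this is the same argument as cases (a) and (b) in the proof of Theorem \ref{LN}. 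Hence $R(N)=\sigma_U(\Delta_h)$, and translating back by $(-a,-b)$ gives the required nest of $K_h$.
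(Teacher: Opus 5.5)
Your plan is the paper's proof with slightly different packaging. You subtract the middle trigonometric piece $k_j$ of $k^*$ (the paper's $P_1\cos\rho(t-t_1)$) and use $k^*\ge k_j$ within $\pi/\rho$ of the arc. This is exactly how the paper obtains $h\le\sigma_U(\Delta_h)-P_1\cos\rho(t-t_1)$ on $[\alpha_j-\pi/\rho,\alpha_{j+1}+\pi/\rho]$. Since $k_j$ is $2\pi/\rho$-periodic, the function $d(t)=g(t-2\pi/\rho)-g(t)$ used inside Lemma~\ref{existence of a nest} is identical to the paper's $d(t)=h(t-2\pi/\rho)-h(t)$. So your translation device and the paper's direct estimate produce the same $t^*$.

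The step you flag as the main obstacle is not an obstacle, and the perturbation fallback is unnecessary. Because $\alpha,\gamma\in(\alpha_j,\alpha_{j+1})$ and $\gamma-\alpha\ge\pi/\rho$, you have
\[
\gamma-\frac{2\pi}{\rho}\ \ge\ \alpha-\frac{\pi}{\rho}\ >\ \alpha_j-\frac{\pi}{\rho},
\qquad
\alpha+\frac{2\pi}{\rho}\ \le\ \gamma+\frac{\pi}{\rho}\ <\ \alpha_{j+1}+\frac{\pi}{\rho}.
\]
Hence the window $[\gamma-2\pi/\rho,\alpha+2\pi/\rho]$ always lies in the region where $g\le\sigma_U(\Delta_h)$, wherever the triple sits. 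The paper records precisely this inclusion. Note also that $\gamma-\alpha<2\pi/\rho$ follows directly from the anchoring condition, as the lemma requires.

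Two smaller remarks. In Step~2, $t^*\in[\gamma,\alpha+2\pi/\rho]$ gives $[\alpha,\gamma]\subset[t^*-2\pi/\rho,t^*]$ for free. It does not, however, give $t^*\le\alpha_{j+1}$, because the right end $\alpha+2\pi/\rho$ may exceed $\alpha_{j+1}$. The paper's argument yields the same range, and afterwards only the existence of a nest with circumradius $\sigma_U(\Delta_h)$ is used, in Proposition~\ref{P4}. So do not spend effort on that localization.

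In Step~3, your convex-hull argument is correct: $0$ lies in the convex hull of $e^{i\rho\alpha},e^{i\rho\beta},e^{i\rho\gamma}$, so no disk of radius less than $\sigma_U(\Delta_h)$ contains the nest. It spells out the lower bound that the paper compresses into a reference to the maximality of $h^*$ at $\alpha,\beta,\gamma$ together with Theorem~\ref{LN}.
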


\begin{proof}

As before, we put $h^*:=h+k^*$.
Assume that $j=1.$ Due to the definition of an elementary $\rho$-trigonometrically convex function, for some $P_j\ge 0, t_j\in[0,2\pi)$, and for all $t\in [\alpha_1-\pi/\rho, \alpha_2+\pi/\rho]$ we have
$$k^*(t)=\begin{cases}
P_0\cos\rho(t-t_0), & t\in[\alpha_1-\pi/\rho,\alpha_1];\\
P_1\cos\rho(t-t_1), & t\in [\alpha_1, \alpha_2];\\
P_2\cos\rho(t-t_2), & t\in [\alpha_2, \alpha_2+\pi/\rho].
\end{cases}
$$
Put 
$$k^*_0(t):=k^*(t)-P_1\cos\rho(t-t_1)=
\begin{cases}
Q_0\sin\rho(\alpha_1-t), & t\in\left[\alpha_1-\dfrac\pi\rho,\alpha_1\right];\\
0, & t\in [\alpha_1, \alpha_2];\\
Q_2\sin\rho(t-\alpha_2), & t\in \left[\alpha_2, \alpha_2+\dfrac\pi\rho\right],
\end{cases}
$$
This function is $\rho$-trigonometrically convex on the interval $\left[\alpha_1-\dfrac\pi\rho, \alpha_2+\dfrac\pi\rho\right]$, hence, $Q_0$ and $Q_2$ are positive, and 
$$
k^*_0(t)\ge 0,\;\; \forall t\in \left[\alpha_1-\frac\pi\rho, \alpha_2+\frac\pi\rho\right].
$$
Therefore, we have
$k^*(t)\ge P_1\cos\rho(t-t_1)\;\;\forall t\in \left[\alpha_1-\dfrac\pi\rho, \alpha_2+\dfrac\pi\rho\right],$
and hence,
\begin{equation}
    \label{k_0>0}h(t)\le \sigma_U(\Delta_h)-P_1\cos\rho(t-t_1),\;\;\forall t\in \left[\alpha_1-\frac\pi\rho, \alpha_2+\frac\pi\rho\right].
    \end{equation}
Since $K^*_1$ is anchored, there exists a triplet  $\{\alpha,\beta,\gamma\}\subset m^*_1$, where $m^*_1= (\alpha_1,\alpha_2)\cap m^*$, satisfying condition \ref{anch}. That is, for any $t\in[0,2\pi)$ we have 
\begin{equation} \label{h^*}
    h^*(t)\le h^*(\alpha)=h^*(\beta)=h^*(\gamma)=\sigma_U(\Delta_h).
\end{equation}
 Note that {$\{\gamma-2\pi/\rho, \alpha+2\pi/\rho\}\subset \left[\alpha_1-\dfrac\pi\rho, \alpha_2+\dfrac\pi\rho\right].$
 }
 
 We proceed in a way analogous to the proof of Lemma \ref{existence of a nest}.
 Put 
 $$d(t):=h\left(t-\frac{2\pi}\rho\right)-h(t).$$
Then, using \eqref{k_0>0} and \eqref{h^*}, we get
\begin{multline*}
    d(\gamma)=h\left(\gamma-\frac{2\pi}\rho\right)-h(\gamma)=h\left(\gamma-\frac{2\pi}\rho\right)-\sigma_U(\Delta_h)+k^*(\gamma)\\
\le
\sigma_U(\Delta_h)-P_1\cos\rho\left(\gamma-\frac{2\pi}\rho-t_1\right)-\sigma_U(\Delta_h)+P_1\cos\rho\left(\gamma-t_1\right)=0.
\end{multline*}
Analogously,
\begin{multline*}
    d\left(\alpha+\frac{2\pi}\rho\right)=h(\alpha)-h\left(\alpha+\frac{2\pi}\rho\right)=\sigma_U(\Delta_h)-k^*(\alpha)-h\left(\alpha+\frac{2\pi}\rho\right)\\
\ge
\sigma_U(\Delta_h)-P_1\cos\rho(\alpha-t_1)-\sigma_U(\Delta_h)+P_1\cos\rho\left(\alpha+\frac{2\pi}\rho-t_1\right)=0.
\end{multline*}

 Therefore, there exists $t^*\in\left[\gamma,\alpha+\dfrac{2\pi}\rho\right]$ such that
 $$d(t^*)=0, d'_+(t^*)\ge 0.$$
Hence, 
$$h(t^*)=h\left(t^*-\dfrac{2\pi}{\rho}\right)$$
and 
$$h'_+\left(t^*-\dfrac{2\pi}{\rho}\right)-h'_-(t^*)\ge h'_+\left(t^*-\dfrac{2\pi}{\rho}\right)-h'_+(t^*)
= d'_+(t^*)\ge0.$$
Hence, $N_{\rho t^*}=K_{(\rho t^*-2\pi,\rho t^*)}$ is a nest. 
{ Note that since $t^*\in\left[\gamma,\alpha+\dfrac{2\pi}\rho\right]$, we have
$$t^*-\dfrac{2\pi}\rho\in\left[\gamma-\dfrac{2\pi}\rho,\alpha\right]\subset \left[\alpha_1-\frac\pi\rho, \alpha_2+\frac\pi\rho\right]. $$ 
Therefore, from \eqref{k_0>0} we see that its circumradius $R(N_{\rho t^*})$ is not greater than $\sigma_U(\Delta_h).$
By \eqref{h^*} and Theorem \ref{LN}, $R(N_{\rho t^*})=\sigma_U(\Delta_h).$

}
\end{proof}

{ Now, given $h\in TC_\rho$, by Lemma \ref{max nest exists} we know that there exists a nest $N_\alpha$ of  $K_h$ such that $R(N_\alpha)=\sigma_U(\Delta_h).$ On the other hand, by Theorem \ref{LN}, this value of the circumradius of a nest is maximal, hence,
  $$R_{\rm loc}^*=\max\{R_\alpha:N_\alpha {\rm \; is \; a \; nest\; for\; }K_h\}=\sigma_U(\Delta_h).$$ 
  Hence, by Proposition \ref{Cor1}, there exists $\tau\in TC_\rho$ such that 
  $$h(t)+\tau(t)\le \sigma_U(\Delta_h)=R_{\rm loc}^*.$$
The proof of Proposition \ref{P4} and Theorem \ref{upper}  is completed.
}}
\end{proof}

Combining Theorem \ref{LN} and Proposition \ref{P4}, we obtain the main
result of this section.

\begin{theorem}\label{T2}
Given a regular measure $\Delta$, we have
\[
\sigma_U(\Delta)=R^*_{\rm loc}
=
\min\left\{\sigma:\ \exists\Lambda\in AD(\Delta,\rho),\
\exists f\in\mathcal E_{\rho,\sigma}\setminus\{0\}: f|_\Lambda=0\right\},
\]
where $R_{\rm loc}^*$ is the maximal local circumradius of the locally convex curve $K$ corresponding to $\Delta$.
\end{theorem}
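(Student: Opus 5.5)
The proof will assemble results already established, in three steps: a lower bound, an upper bound that also shows attainment of $R^*_{\rm loc}$, and attainment of the infimum defining $\sigma_U$.

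\emph{Lower bound and upper bound.} Let $h=h_\Delta$ be given by \eqref{h} and let $K=K_h$. By Theorem \ref{LN}, $K_h$ has at least one nest and $\sigma_U(\Delta)\ge R^*_{\rm loc}$. For the reverse inequality I use the argument that closes Proposition \ref{P4}. Proposition \ref{Cor1} supplies a type-minimizing elementary $k^*$ with widely spaced singular points. Lemma \ref{loc-balK_j} then gives an anchored subarc of $K_{h+k^*}$. Finally, Lemma \ref{max nest exists} produces a nest $N_\alpha$ of $K_h$ with $R(N_\alpha)=\sigma_U(\Delta)$. Hence $R^*_{\rm loc}\ge\sigma_U(\Delta)$, and the supremum in \eqref{R2} is attained, so $\sigma_U(\Delta)=R^*_{\rm loc}$.

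\emph{Attainment of the infimum.} It remains to show that $\sigma_0:=\sigma_U(\Delta)$ itself belongs to the set over which the infimum is taken. Fix any $\Lambda\in AD(\Delta,\rho)$; recall that $\Lambda$ is regular. Let $W_\Lambda$ be the canonical product \eqref{W}. It has completely regular growth, zero set exactly $\Lambda$, and indicator $h_{W_\Lambda}=h+a\cos\rho t+b\sin\rho t$, where $a=b=0$ if $\rho\notin\mathbb N$. By Proposition \ref{Cor1} there is $k^*\in TC_\rho$ with $\max_t(h+k^*)=\sigma_0$. In the integer case I replace $k^*$ by $k^*-a\cos\rho t-b\sin\rho t$, which still lies in $TC_\rho$ because $T_\rho$ consists of trigonometric terms; this gives $\max_t(h_{W_\Lambda}+k^*)=\sigma_0$. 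Next I take an entire function $G$ of completely regular growth whose indicator is $k^*$. Such a $G$ exists by Levin's theory: build a CRG canonical product for the measure $\Delta_{k^*}$ and, for integer $\rho$, correct the trigonometric part with a factor $e^{Cz^\rho}$. By Levin's theorem on the indicator of a product \cite[Chapter III, \S 4, Theorem 5]{Levin}, $h_{W_\Lambda G}=h_{W_\Lambda}+k^*\le\sigma_0$. Therefore $W_\Lambda G\in\mathcal E_{\rho,\sigma_0}\setminus\{0\}$ and it vanishes on $\Lambda$, so the infimum is a minimum.

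The main obstacle is this last step. Levin's product theorem applies because one factor is of completely regular growth, and this should hold with either factor CRG. I also need to confirm that type $\sigma_0$ in the sense of $\mathcal E_{\rho,\sigma_0}$, which allows $\sigma_0+\eps$ growth, follows from the pointwise bound $h\le\sigma_0$ on the indicator. This follows from the standard fact that the upper bound $\limsup_{r\to\infty} r^{-\rho}\log|f(re^{it})|\le h(t)$ holds uniformly in $t$, and $\max_t h(t)$ equals the type of $f$. The remaining steps are direct citations of results already proved.
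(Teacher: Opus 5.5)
Your proposal is correct and follows essentially the paper's route. Theorem \ref{LN} gives the lower bound; Proposition \ref{Cor1} with Lemmas \ref{loc-balK_j} and \ref{max nest exists} gives the reverse inequality and attainment of $R^*_{\rm loc}$; and the multiplier argument of Theorem \ref{upper} (a CRG factor with indicator $k^*$, together with Levin's product theorem) gives attainment of the minimum. Your explicit absorption of the term $a\cos\rho t+b\sin\rho t$ into $k^*$ is a detail the paper leaves implicit: by Lemma \ref{shift}, $K_{h_{W_\Lambda}}$ is a translate of $K_h$, so $R^*_{\rm loc}$ is unchanged.
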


Consequently, we have the following uniqueness criterion.

\begin{corollary}
A regular set $\Lambda\in AD(\Delta,\rho)$ is a uniqueness set for
$\mathcal E_{\rho,\sigma}$ if and only if
\[
\sigma<R^*_{\rm loc}.
\]
\end{corollary}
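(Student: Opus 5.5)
The corollary follows from Theorem \ref{T2} combined with Remark \ref{Rem2}. In both directions we use that the uniqueness property of a regular set $\Lambda\in AD(\Delta,\rho)$ depends only on $\Delta$ and $\rho$ (\cite[Theorem 1.3]{PartI}).

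Suppose first that $\sigma<R^*_{\rm loc}$, and assume that $\Lambda$ is not a uniqueness set for $\mathcal E_{\rho,\sigma}$. Then there is $f\in\mathcal E_{\rho,\sigma}\setminus\{0\}$ with $f|_\Lambda=0$. Since $\Lambda\in AD(\Delta,\rho)$, the pair $(\Lambda,f)$ is admissible in the set whose minimum is $\sigma_U(\Delta)$ in Theorem \ref{T2}. Hence $\sigma_U(\Delta)\le\sigma<R^*_{\rm loc}=\sigma_U(\Delta)$, a contradiction. One could also argue directly through Theorem \ref{LN}: its proof shows that a nonzero function of type strictly less than some nest radius $R_\alpha$ cannot vanish on $\Lambda$. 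Either way, $\Lambda$ is a uniqueness set.

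Conversely, suppose $\sigma\ge R^*_{\rm loc}$. We must exhibit a nonzero $f\in\mathcal E_{\rho,\sigma}$ vanishing on this particular $\Lambda$, and this is the one point that needs care. Theorem \ref{T2} only says the minimum is attained for \emph{some} $\Lambda'\in AD(\Delta,\rho)$. To pass to the given $\Lambda$, I would use the constructive route instead of the existential one.

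Let $W_\Lambda$ be the canonical product \eqref{W}. It has completely regular growth, its zero set is exactly $\Lambda$, and its indicator is $h_\Delta$ up to a term in $T_\rho$. Adjusting by a factor $e^{Cz^\rho}$ when $\rho\in\mathbb N$, we may take its indicator to be exactly $h_\Delta$. Applying Theorem \ref{upper} with $F=W_\Lambda$ gives a multiplier $G\in\mathcal E_\rho$ with $W_\Lambda G\in\mathcal E_{\rho,R^*_{\rm loc}}\subset\mathcal E_{\rho,\sigma}$. The product $W_\Lambda G$ is nonzero and vanishes on $\Lambda$ with the prescribed multiplicities. Hence $\Lambda$ is a nonuniqueness set.

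The $T_\rho$ ambiguity of the indicator causes no trouble. By Lemma \ref{shift}, equivalent functions have translated curves, so $R^*_{\rm loc}$ is unchanged. Alternatively, Remark \ref{Rem2} (the ``for all $\Lambda$'' form of $\sigma_U$) together with attainment of the infimum yields the same conclusion directly.
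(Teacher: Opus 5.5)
Your proof is correct and follows the paper's route: the corollary is read off from Theorem \ref{T2}. The uniqueness direction is the definition of $\sigma_U(\Delta)$ combined with $\sigma_U(\Delta)=R^*_{\rm loc}$. The non-uniqueness direction is the attainment of that minimum, which rests on Proposition \ref{P4} and Theorem \ref{upper}, exactly as you apply them to $W_\Lambda$, with Lemma \ref{shift} handling the $T_\rho$ ambiguity. Because you apply Theorem \ref{upper} to the given $\Lambda$ directly, neither direction actually needs the Part I dependence-only-on-$\Delta$ result (Remark \ref{Rem2}) that your first paragraph announces.
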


By the Luntz theorem, in the case $\rho\in\mathbb N$ the non-uniqueness part
of this criterion remains valid for non-regular sets with regular density.
\begin{corollary}
Let $\rho\in\mathbb N$. If $\Lambda$ is a not necessarily regular set with
regular angular density $\Delta$, then $\Lambda$ fails to be a uniqueness set
for $\mathcal E_{\rho,\sigma}$ whenever
\[
\sigma\ge R^*_{\rm loc}.
\]
\end{corollary}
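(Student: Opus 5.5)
The final statement is the second Corollary: for $\rho\in\mathbb N$, a set $\Lambda$ with regular angular density $\Delta$, not necessarily satisfying the Lindel\"of condition, fails to be a uniqueness set for $\mathcal E_{\rho,\sigma}$ whenever $\sigma\ge R^*_{\rm loc}$. The plan is to enlarge $\Lambda$ by the Luntz Theorem, apply Theorem \ref{T2} to the enlarged set, and restrict back to $\Lambda$.

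First, apply the Luntz Theorem with $\Gamma=\Lambda$. This yields $\Lambda^*\supset\Lambda$ with the same angular density $\Delta_{\Lambda^*}=\Delta$. Its partial sums $\sum_{|\lambda_k|<r}\lambda_k^{-\rho}$ have a finite limit as $r\to\infty$. These sums are also bounded for small $r$, since only finitely many terms occur there. Hence they are bounded for all $0<R<\infty$, which is exactly the Lindel\"of condition. So $\Lambda^*\in AD(\Delta,\rho)$ is regular.

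Next, apply Theorem \ref{T2}. The minimum in its statement is attained and equals $R^*_{\rm loc}$. Using Remark \ref{Rem2} (equivalently \cite[Theorem 1.3]{PartI}), the existence of such a function depends only on $\Delta$ and $\rho$. Hence for every regular set in $AD(\Delta,\rho)$, in particular for $\Lambda^*$, there is $f\in\mathcal E_{\rho,R^*_{\rm loc}}\setminus\{0\}$ with $f|_{\Lambda^*}=0$. One point needs care: Remark \ref{Rem2} is phrased with an infimum, so we must know the "for all $\Lambda$" version holds at the value $\sigma=R^*_{\rm loc}$ itself. This is the place I expect to need justification. If the direct route via Remark \ref{Rem2} is unclear, I would build $f$ explicitly. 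Take $W_{\Lambda^*}$, the canonical product \eqref{W}. Its indicator is $h+a\cos\rho t+b\sin\rho t$. Multiply by $e^{-(a-ib)z^\rho}$ to remove the trigonometric term. Then multiply by a function $G$ of completely regular growth whose indicator is the $\tau$ from Proposition \ref{P4}. Levin's product theorem gives an indicator $\le R^*_{\rm loc}$, so the product lies in $\mathcal E_{\rho,R^*_{\rm loc}}$ and vanishes on $\Lambda^*$ with the required multiplicities.

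Finally, since $\Lambda\subset\Lambda^*$ with multiplicities, $f$ vanishes on $\Lambda$. Since $\mathcal E_{\rho,R^*_{\rm loc}}\subset\mathcal E_{\rho,\sigma}$ for every $\sigma\ge R^*_{\rm loc}$, the set $\Lambda$ is a nonuniqueness set for $\mathcal E_{\rho,\sigma}$.
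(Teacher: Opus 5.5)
Your proof is correct and follows the paper's own route: the paper also uses the Luntz Theorem to embed $\Lambda$ into a regular set $\Lambda^*\in AD(\Delta,\rho)$, applies the preceding uniqueness criterion for regular sets (which already covers the critical value $\sigma=R^*_{\rm loc}$ you were concerned about), and restricts the resulting nonzero function back to $\Lambda$. Your fallback construction, using $W_{\Lambda^*}$, the exponential correction, and the function $\tau$ from Proposition \ref{P4}, is the argument behind Theorem \ref{upper} and is also valid.
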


\section{Type-minimizing measure: geometric construction.}\label{sec4}
Our goal in this section is to show how the locally convex curve $K_h$ can be "fitted" into a circle of radius $R_{\rm loc}^*$. 
In the geometric language Theorem \ref{T1} involves a type of local surgery: we cut the curve in some places, and additional segments are inserted into the cuts in such a way that the local convexity is preserved.
{To illustrate  the counterintuitive idea  that adding segments to a locally convex curve can lead to a reduction of its circumradius, we begin with some examples in the first subsection. In the second subsection  we explain how the type-minimizing measure can be constructed in geometric terms for a particular case $\rho\in(1/2,1)$. The third subsection is devoted to the case $\rho=2$.}

\subsection{Two examples for $\rho\in\mathbb N$.}
\begin{example}
\label{ex}{\rm
Let $\rho =2$, and $2\pi\Delta=\delta_0+\delta_{2\pi/3}+\delta_{-2\pi/3}.$
The corresponding locally convex curve $K_\Delta$ is a triangle  such that its support line  makes a $4\pi/3$-turn (in  the counterclockwise direction) at each vertex (see Fig.~\ref{KI5}). The length of each side of the triangle is $1$, hence the circumradius is $R_\Delta=1/\sqrt 3$.
The three nests in this case correspond to three sides of the triangle, so the maximal local circumradius is $R_{\rm loc}^*=1/2<R_\Delta.$

\begin{figure}[h]
    \centering
   \includegraphics[width=0.5\linewidth]{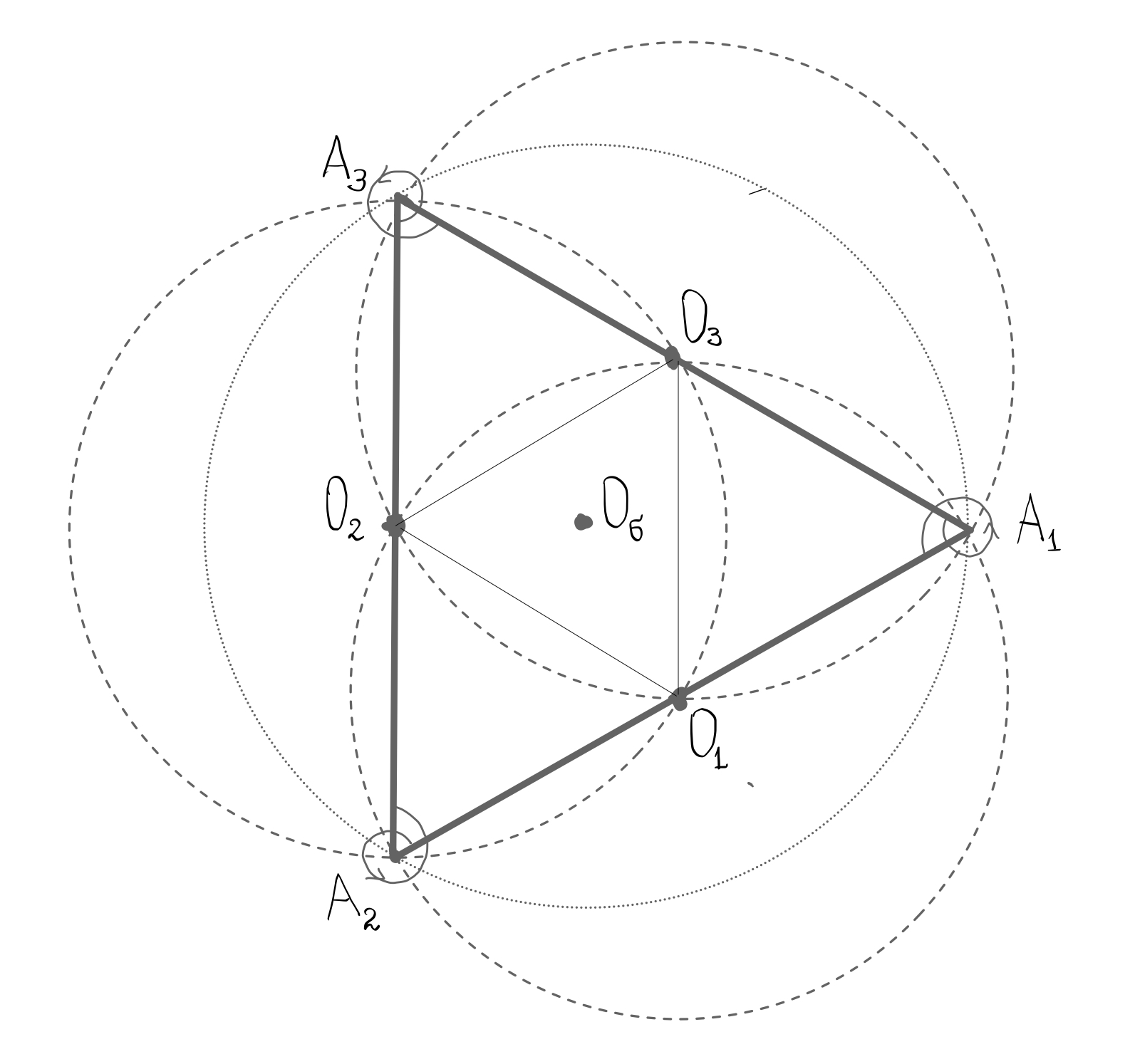}
    \caption{\small $K_\Delta$ for $\rho=2$, $2\pi\Delta=\delta_0+\delta_{2\pi/3}+\delta_{-2\pi/3}.$ The circumradius $R_\Delta = 1/\sqrt 3.$
}
      \label{KI5}
    
    \end{figure}

We will perform a local surgery as follows: we cut $K_\Delta$ at the points $R_\Delta e^{2\pi i},$ $R_\Delta e^{2\pi/3i}$ and $R_\Delta e^{-2\pi/3i}$.  These points correspond to the supporting lines parallel to the lines $(O_1O_2),$ $(O_2O_3),$ $(O_3O_1)$. Then we insert  the segments of the length $|O_1O_2|=|O_2O_3|=|O_3O_1|=1/2$ of the corresponding supporting lines  into the cuts. 

This transformation of the locally convex curve corresponds to the addition of the measure
 $$2\pi\Delta_0:=1/2(\delta_\pi+\delta_{\pi/3}+\delta_{-\pi/3}),$$ so that   $\Delta^*:=\Delta+\Delta_0.$
The corresponding locally convex curve $K_{\Delta^*}$ is  presented in Fig~\ref{KI6}.

\begin{figure}[h]
    \centering
    \includegraphics[width=0.4\linewidth]{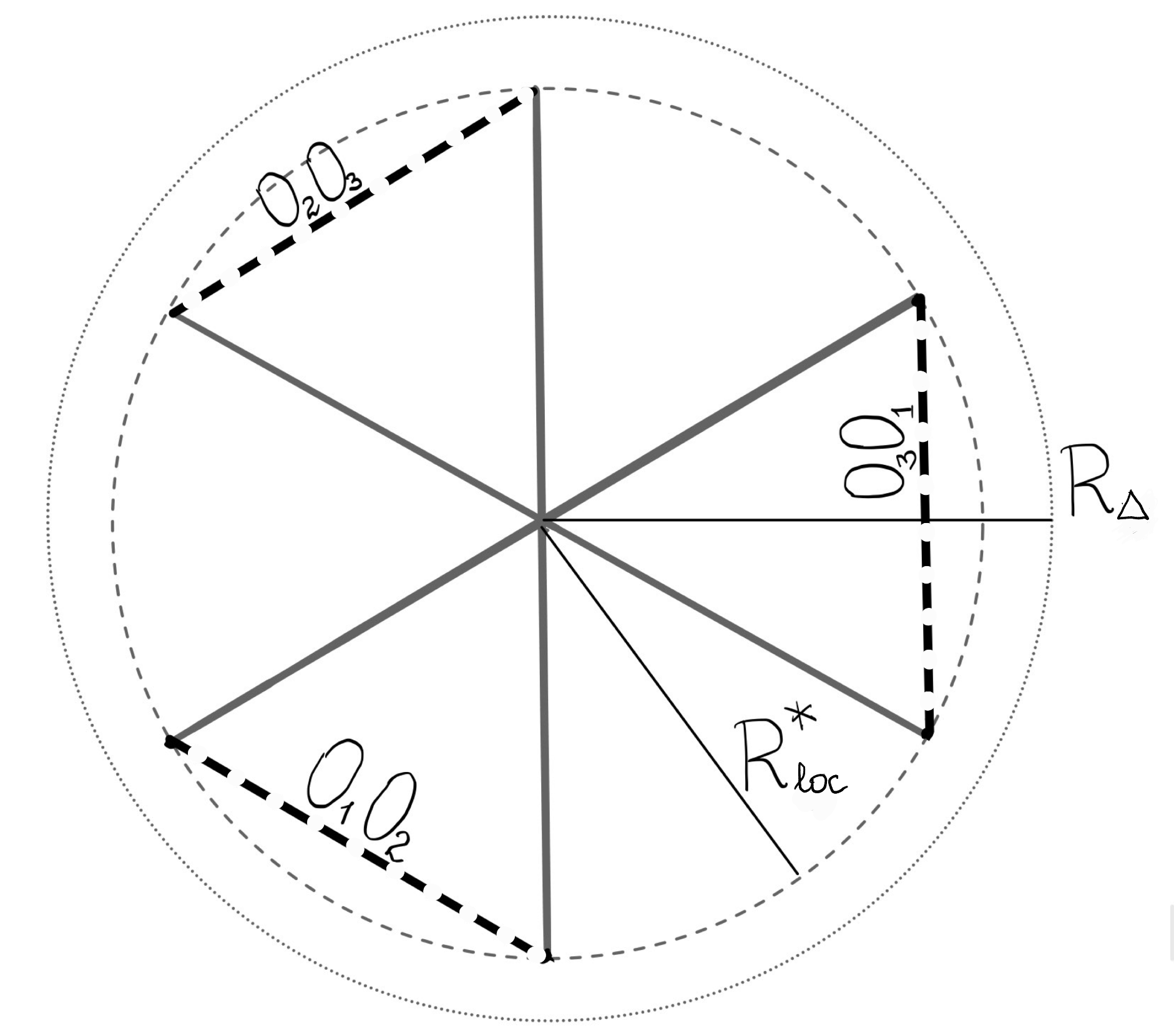}
     \caption{\small After the local surgery, the circumradius is reduced: $R(K_\Delta^*)=R_{\rm loc}^*=1/2<1/\sqrt 3.$ Dashed lines correspond to the masses of the type-minimizing measure.}
    \label{KI6}
\end{figure}

So, we have managed to fit $K_\Delta$ into a smaller-radius circumcircle by adding three supplementary masses. The main point here is to achieve the displacement of the circumcircles of all three nests in such a way they all are covered by the circumdisk of the nest with the  maximal circumradius.
    }
\end{example}

{  In the next example, we show how the idea of displacing the circumcircles of nests can be modified: in some cases, instead of moving all centers to a single point, it suffices to arrange the circumcircles of nests so that they are all contained in a common disk of maximal local circumradius. We construct a family of type-minimizing elementary $3$-trigonometrically convex functions with widely spaced set of singular points.
 \begin{example} \label{p=3}
    Let     $\rho=3$ and $\Delta=\dfrac1{2\pi}\left(\sqrt 3\cdot\delta_0+\delta_{\pi/2}+\sqrt 3\cdot\delta_\pi+\delta_{3\pi/2}\right).$
    Then for $t\in[0,\pi]$ we have
        $$h(t+\pi k)=\cos\left(\left|3t-\dfrac{3\pi}{2}\right|-\dfrac{5\pi}{6}\right),\;\;k\in\mathbb Z.$$
    The corresponding locally convex curve $K_h$ is a rectangle with side  lengths $1$ and $\sqrt 3$, such that its circumradius is $1$ and the support line makes a $3\pi/2$-turn at each vertex (see Fig.~\ref{p=3 1}).

\begin{figure}[h]
    \centering
    \includegraphics[width=0.4\linewidth]{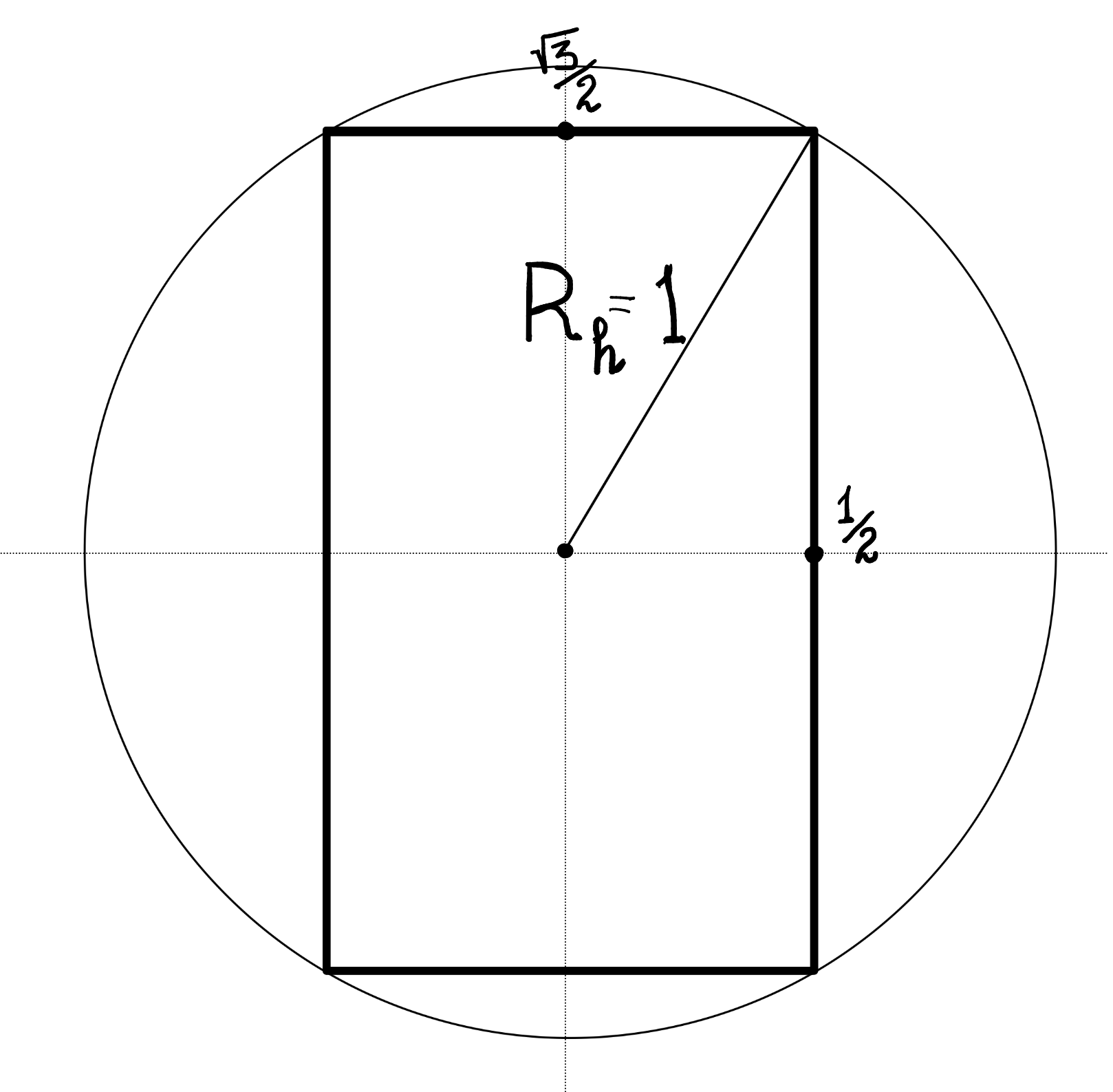}
   \caption{\small $K_h$ corresponding to $\rho=3$,\\ $ \Delta=\dfrac1{2\pi}\left(\sqrt 3\cdot\delta_0+\delta_{\pi/2}+\sqrt 3\cdot\delta_\pi+\delta_{3\pi/2}\right).$}
        \label{p=3 1}
\end{figure}
     We have
    $$\int_{-\pi}^\pi
e^{i(t-\pi)}{\rm d}\Delta|_{(-\pi,\pi)}(t/3)=-\frac{\sqrt 3}{2\pi}\le0,$$
hence, by Proposition \ref{crit_nest} (item ({\it ii})) the subarc $K_{(-\pi,\pi)}$ is a nest. It is easy to see that its circumradius is $\sqrt3/2.$ Hence, $\sigma_U(\Delta)\ge \sqrt 3/2.$

We are going to construct a one-parameter family of type-minimizing measures. Geometrically, the parameter $u$ controls how far the shorter sides of the original rectangle are shifted towards its centre in the modified curve, while the vertical sides are moved to the centre (see Fig.~\ref{p=3 2}).

For every $u\in\left[\dfrac{\sqrt3-\sqrt2}{2},\dfrac{\sqrt3}{2}\right]$ put $\displaystyle\psi=\frac{\arctan( 2u)+\pi/2}3$ and define the measure
$$\Delta_u^*=\frac{\sqrt{u^2+1/4}}{2\pi}\left(\delta_{\psi}+\delta_{\pi-\psi}+\delta_{\pi+\psi}+\delta_{2\pi-\psi}\right),$$
the corresponding $3$-trigonometrically convex function is
$$h_u^*(t+\pi k)=\begin{cases}
    1/2\cos 3t,&t\in[0,\psi];\\
    u\cos3(t-\pi/2),&t\in[\psi,\pi-\psi];
    \\
    -1/2\cos 3t,&t\in[\pi-\psi,\pi].
\end{cases}$$
Then 
$$\max_{t\in[0,2\pi]}(h(t)+h_u^*(t))=\sqrt3/2,$$
where the maximum is attained at the point $t=\pi/6.$ 
Indeed, for $t\in[0,\psi]$, since $\psi\le 5\pi/18<\pi/2$, we have
$$h(t)+h_u^*(t)=\cos(3t-4\pi/6)+1/2\cos 3t=\sqrt3/2\sin 3t\le \sqrt3/2.$$

For $t\in[\psi, \pi/2]$ we have
\begin{gather*}
    h(t)+h_u^*(t)=\cos(3t-4\pi/6)+u\cos 3(t-\pi/2)\\=-1/2\cos 3t+\sqrt3/2\sin3t-u\sin3t\le\sqrt{\dfrac14+\left(\frac{\sqrt3}2-u\right)^2}\le\frac{\sqrt3}2.    
\end{gather*}
By symmetry and periodicity the estimate holds for $t\in[0,2\pi).$
Hence, $\sigma_U(\Delta)=\sqrt3/2.$ (see Fig.~\ref{p=3 2})

        \begin{figure}[h]
            \centering
            \includegraphics[width=0.4\linewidth]{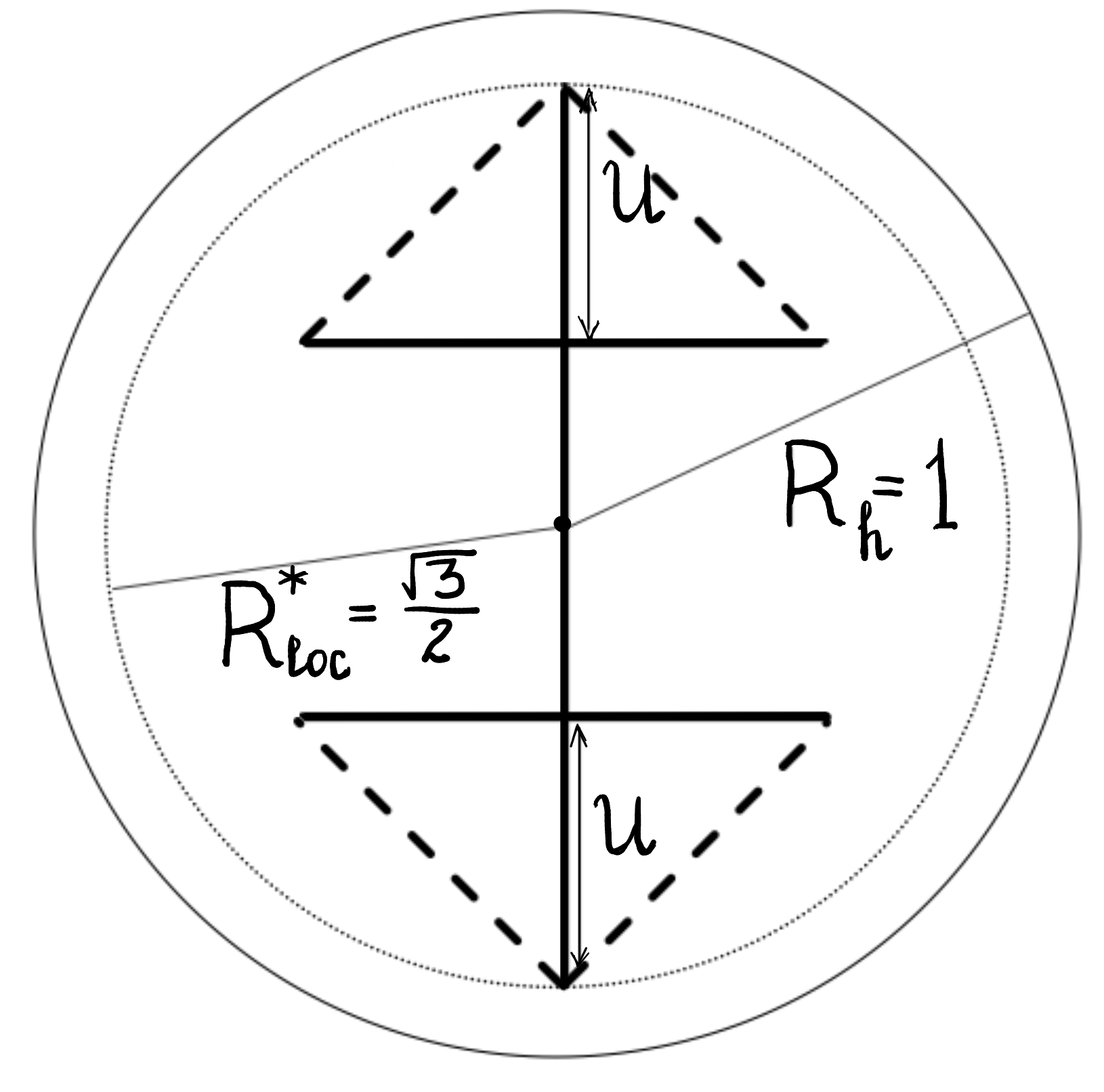}
        \caption{\small Locally trigonometrically convex curve corresponding to $h+h_{1/2}^*.$ (dashed lines correspond to the masses of type-minimizing measure).}
    \label{p=3 2} \end{figure}

Thus, every measure $\Delta_u^*$ in the constructed family  is type-minimizing, and the corresponding function  $h_u^*$
is elementary $3$-trigonometrically convex with a widely spaced set of singular points. We do not claim, however,
that this family exhausts the whole class of type-minimizing measures.

\end{example}
}

\subsection{Construction of a type-minimizing measure: the case $\rho\in(1/2,1)$}
Without loss of generality, we can assume that $h\in TC_\rho$  and $K_h$ is the associated locally-convex curve. Let $h(0)=\displaystyle\max_{t\in\mathbb R} h(t).$  {Note that $\widetilde h(2\pi\rho k)=\widetilde h(0)$ for $k\in\mathbb Z$, since $\widetilde h$ has period $2\pi\rho.$ Put $\gamma_k:=2\pi\rho k.$
Applying Lemma \ref{existence of a nest} to the points $\alpha=-2\pi$ and $\beta=0$, we conclude that there exists $\eta/\rho\in[0,2\pi(1-\rho)/\rho]$ such that
$N_{\eta}=K_{(\eta-2\pi,\eta)}$ is a nest. We put $\eta_k:=\eta+2\pi k\rho,$ { then the sets $N_k^\#:=K_{(\eta_k-2\pi,\eta_k)}$ are also the nests, due to the invariance of $K_h$ under rotation by the angle $2\pi\rho$.}

Suppose  that the function $h$ is not locally $\rho$-balanced, that is, $R_{\rm loc}^*<R_h$.
Since
$\eta-2\pi\le-2\pi\rho< 0\le\eta,$
we have
$$K_{(-2\pi\rho,0)}\subset N_{\eta}=N_0^\#,$$
$$K_{(\gamma_{ k-1}, \gamma_k)}\subset N_{\eta_k}=:N_k^\#, \;\;k\in\mathbb Z.$$ }
Thus, we have a covering of $K_h$ by (a countable number of) nests:
$$K_h\subset\bigcup_{k\in\mathbb Z}  N_k^\#.$$

Next, let $D_0$ be the circumdisk of $N_0^\#$, $O_0=re^{i\theta_0},$  be its circumcenter. Then, due to the invariance of $K_h$ under the rotation, we have 
$$N_k^\#=e^{2\pi\rho k i}N_0^\#,$$
 the circumdisk  $D_k$  of $N_k^\#$ has the center $O_k=re^{i\theta_k},$ where $\theta_k=\theta_{0}+2\pi\rho k$. 

We denote by $R_0\le R_{\rm loc}^* $ the common circumradius of the nests $N_k^\#$ (see Fig.\ref{p1}). 
Now, for every $k\in\mathbb Z$, for $t\in [\gamma_{k-1}, \gamma_k]$ we have
$$\widetilde h(t)\le R_{0}+r\cos(t-\theta_k)=:\tau_{k}(t).$$ 
\begin{figure}[h]
    \centering
\includegraphics[width=0.5\linewidth]{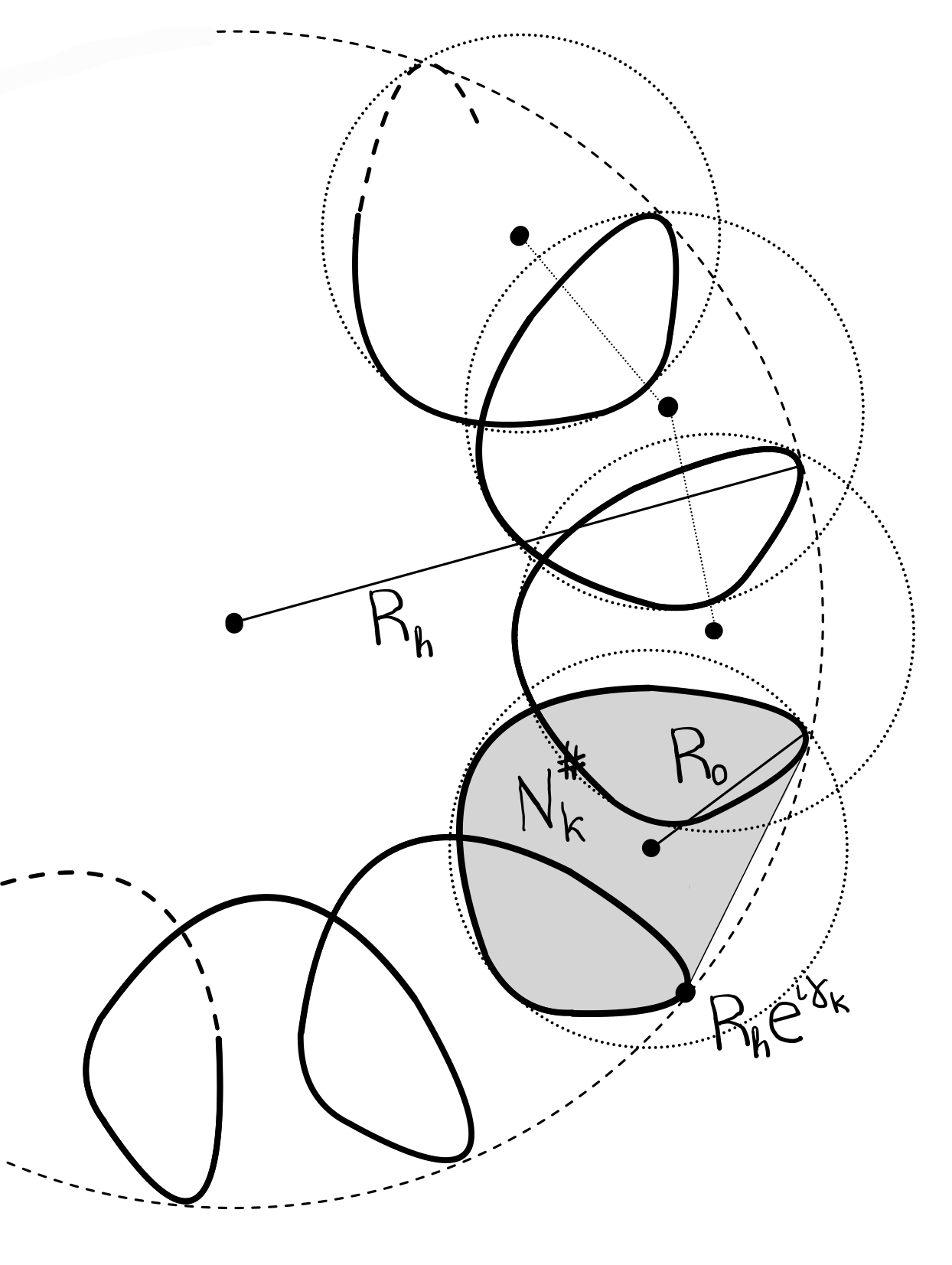}
    \caption{\small $K_h$ corresponding to  $h\in TC_\rho$ for $\rho\in(1/2,1)$.}
    \label{p1}
\end{figure}
Put 
$\zeta_j:=\theta_j+\pi\rho-\pi.$
Then $\tau_{j}(\zeta_j)=\tau_{j+1}(\zeta_j)$. From the geometric point of view, it means that the line $x\cos\zeta_j+y\sin\zeta_j=\tau_{j}(\zeta_j)$ is the tangent line  to $D_{j+1}$   and $ D_j$, simultaneously. Hence, it is parallel to the line $(O_{j} O_{j+1})$. 

Moreover, we have $\tau'_{j}(\zeta_j)>0,\;\;\tau'_{j+1}(\zeta_j)<0,$ and these values do not depend on $j$ due to the rotational invariance of $K_h$.

We put $2\pi\mu:=\tau'_{j}(\zeta_j)-\tau'_{j+1}(\zeta_j)>0.$ Then 
$$2\pi\mu=-r\sin(\zeta_j-\theta_{{j}})+r\sin(\zeta_j-\theta_{{j+1}})=2r\sin\pi\rho=|O_{{j}}O_{j+1}|,$$
hence, $\mu=\displaystyle\frac{|O_{{j}}O_{j+1}|}{2\pi}.$

We put
$$ \widetilde H(t):=
 \tau_j(t), \;\;\; t\in [\zeta_{j-1},\zeta_j].
$$
Then we have $\widetilde h(t)\le \widetilde H(t),\;\;\forall t\in[0,2\pi\rho].$
Define
$$ \widetilde k(t):=
R_0-\widetilde H(t), \;\;\;k(t):=\widetilde k(\rho t),
$$
then $h(t)+k(t)\le R_0$.

Consequently, $\sigma_U(\Delta_h)\le R_0$. Combining this with Theorem
\ref{LN} and the inequality $R_0\le R^*_{\rm loc}$, we obtain
$
R_0=R^*_{\rm loc}.
$

On the other hand, we have
$$\widetilde k(t):=
  - r\cos(t-\theta_j), \;\;\; t\in [\zeta_{j-1},\zeta_j],
$$
and $\widetilde k'_+(\zeta_j)>\widetilde k'_-(\zeta_j)$, hence, $k$ is elementary $\rho$-trigonometrically convex.

To reconstruct the  measure corresponding to $k$, we need only the information about the function on some interval with length $2\pi$.  Let us take the interval $[\theta_0/\rho-\pi,\theta_0/\rho+\pi].$ On this interval there is only one singular point $\zeta_0/\rho$ of the function $k.$

\begin{figure}[h]
    \centering
    \includegraphics[width=0.5\linewidth]{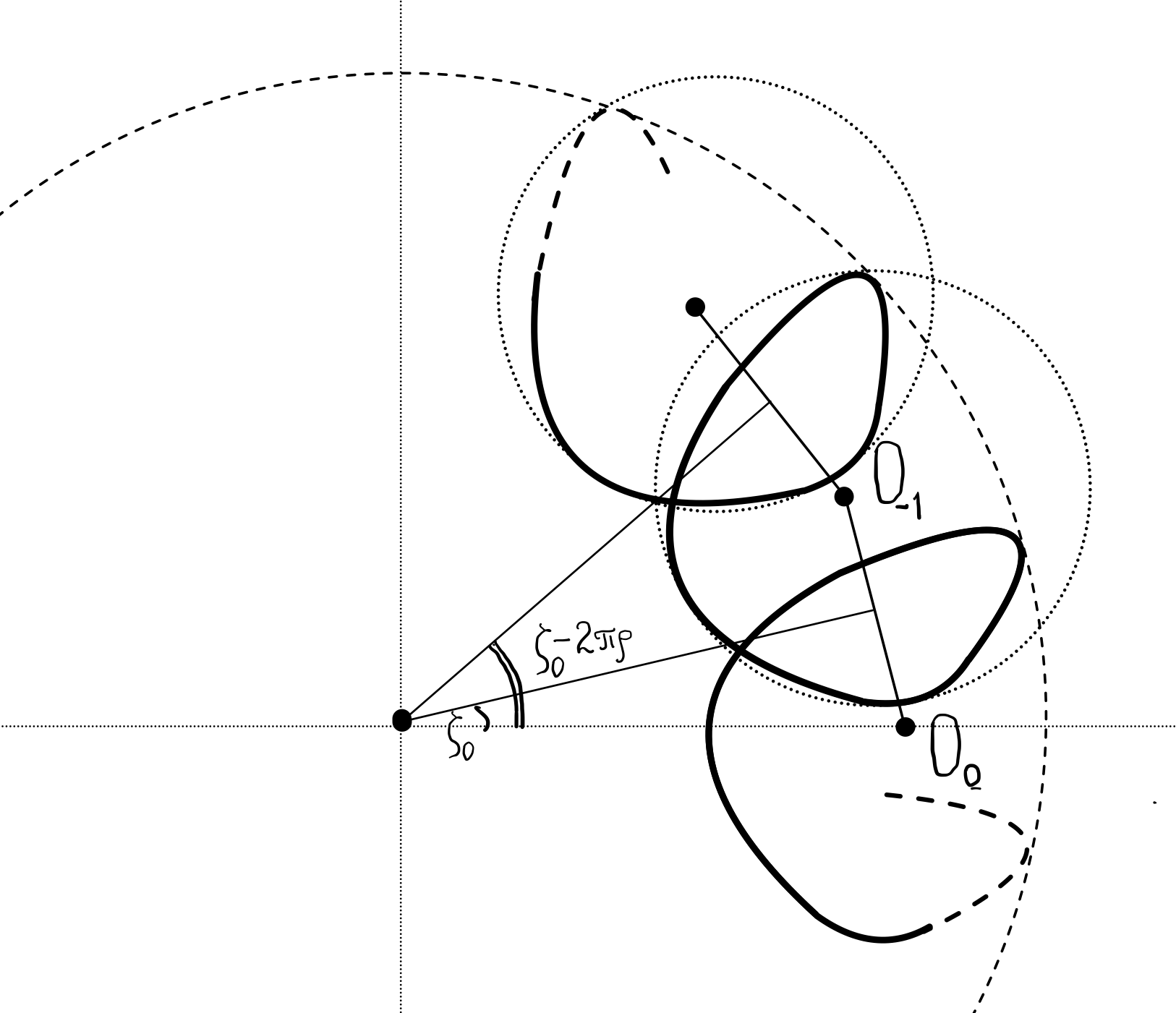}
    \caption{\small Geometrical meaning of the additional mass $\frac{|O_{-1}O_0|}{2\pi}\delta_{\zeta_0/\rho}$ }
    \label{Center}
    \end{figure}
The corresponding measure  is (see Fig.~\ref{Center}):
$$\Delta_0:=\frac{|O_{-1}O_0|}{2\pi}\delta_{\zeta_0/\rho}.$$

The points of $K_h$ where the local surgery should be made are the points $\zeta_j$, where the support line is parallel to the line that connects the two consecutive circumcenters $O_j$ and $O_{j+1}$ (see Fig.~\ref{misparaim}).

\begin{figure}[h]
    \centering
    \includegraphics[width=0.5\linewidth]{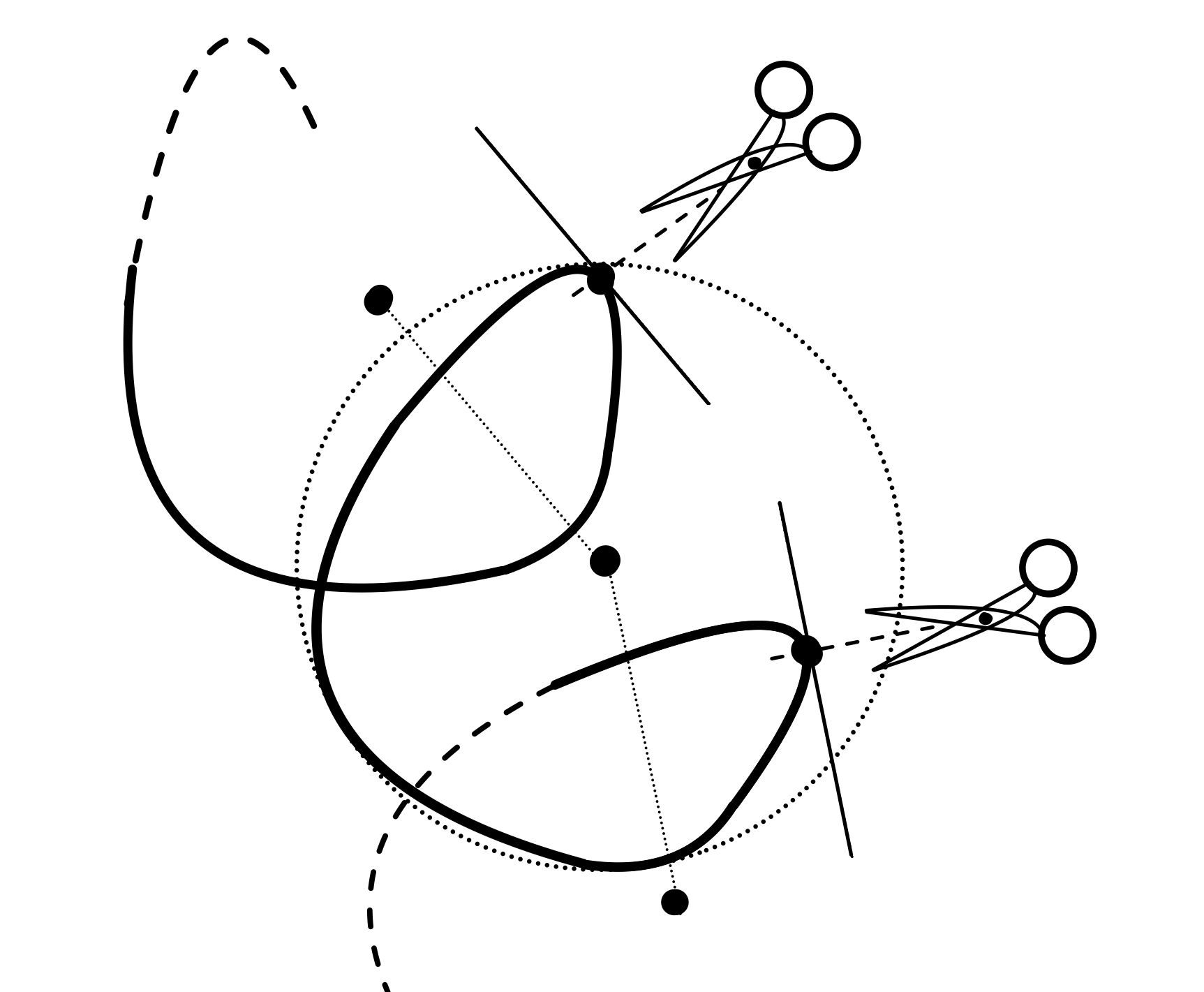}
    \caption{\small Identification of the points where the local surgery should be made.}
    \label{misparaim}
\end{figure}

Note that, 
the surgery points shown in the figure belong to the same orbit under the
rotations involved in the construction. Hence the corresponding local surgeries
represent the insertion of a single additional mass into the measure.

\begin{figure}[H]
    \centering
    \includegraphics[width=0.3\linewidth]{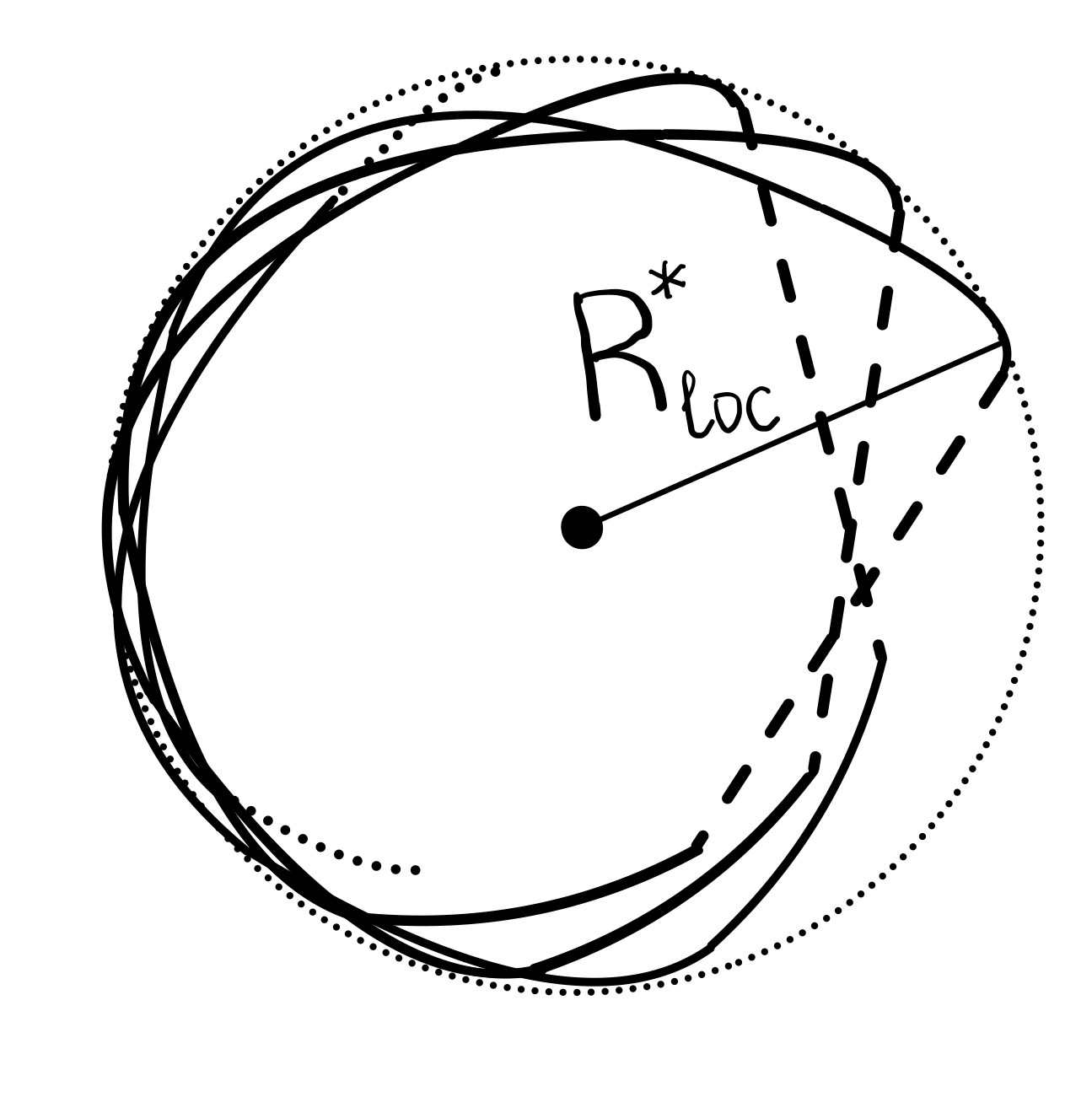}
    \caption{\small Part of the modified locally trigonometrically convex curve $K_h$ after the surgery (the dashed lines correspond to the additional mass).}
    \label{After}
\end{figure}
On Fig.~\ref{After} a part of the modified locally trigonometrically convex curve $K_h$ after the surgery is presented, it fits the circumcircle of the single nest and has circumradius $R_{\rm loc}^*$.

\subsection{ Construction of a type-minimizing measure: the case $\rho=2$}

{
 The case $\rho=2$ occurs to be more technically involved then the case $\rho\in(1/2,1)$ where rotational symmetry plays a crucial simplifying role. The main idea of this section is to show that in the case $\rho=2$ it is also possible to achieve type minimization in geometric way. }

\subsubsection{Covering of $K_h$ by nests}\label{assump_origin}
By Lemma \ref{shift}, we may assume that the circumcenter of $K_h$ is at the origin.
{
Let us recall the definition of a locally $2$-balanced function. Given $h\in TC_2$, put
$$m_h:=\{t\in\mathbb R: h(t/\rho)=\max_{t\in\mathbb R}h(t)=R_h\}.$$
A function $h$ is said to be {locally $2$-balanced}
 if there exist three points $\alpha, \beta,\gamma\in m_h$ such that 
$$0<\beta-\alpha\le \pi,\;\; 0\le \gamma-\beta<\pi, \gamma-\alpha\ge\pi.$$}
The geometric meaning of this notion for the locally trigonometrically convex curve  $K_h$ is  illustrated in Fig.\ref{KI8}.  

Recall, that by item (5) of Theorem \ref{PartI},  $h\in TC_2$ is locally $2$-balanced if and only if $$\sigma_U(\Delta_h)=\sigma_Z(\Delta_h).$$

\begin{figure}[h]
    \centering
    \includegraphics[width=0.9\linewidth]{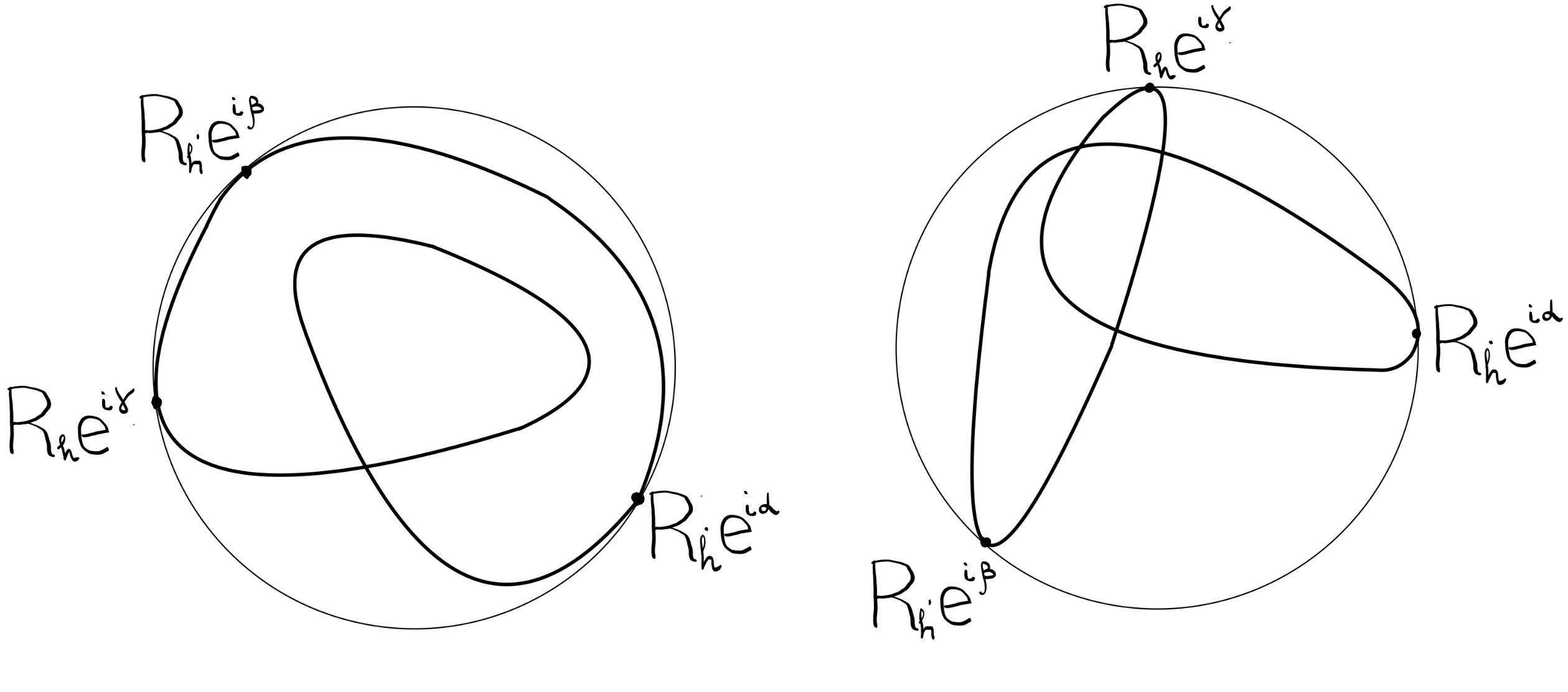}
   \caption{\small Left picture: $K_h$ corresponds to the locally 2-balanced function $h$: { $0<\beta-\alpha\le \pi,\;\; 0\le \gamma-\beta<\pi, \gamma-\alpha\ge\pi.$ }\\
        Right picture: the  corresponding function $h$ is not locally 2-balanced: {  $\beta-\alpha> \pi,\;\;  \gamma-\beta>\pi, (\alpha+4\pi)-\gamma>\pi.$ .}}
      \label{KI8}
\end{figure}

The following lemmas establish key properties of the locally convex curve $K_h$ when $h$ is not locally $2$-balanced.

\begin{lemma}\label{3points}
       If $h$ is not locally $2$-balanced, then 
      for any pair of points $t_1, t_2\in m_h$ we have
      $$(t_1-t_2)\notin\{ \pi(4k+1),\pi(4k+2),\pi(4k+3),\;k\in\mathbb Z\}.$$
       \end{lemma}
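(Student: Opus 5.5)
We argue by contradiction, working throughout in the stretched variable. Here $\rho=2$, $\widetilde h(t)=h(t/2)$ is $4\pi$-periodic, and $m_h$ is the set of $t$ with $\widetilde h(t)=R_h$. This set is invariant under $t\mapsto t+4\pi$, so the difference $t_1-t_2$ matters only modulo $4\pi$. Moreover $t_1-t_2\equiv\pi(4k+3)$ is the same as $t_2-t_1\equiv\pi(4k+1)$. So it suffices to rule out two cases: differences congruent to $\pi$ and differences congruent to $2\pi$ modulo $4\pi$. In each case we exhibit a triple $\alpha\le\beta\le\gamma$ in $m_h$ satisfying the local balance condition
$$0<\beta-\alpha\le\pi,\qquad 0\le\gamma-\beta<\pi,\qquad \gamma-\alpha\ge\pi,$$
which contradicts the hypothesis that $h$ is not locally $2$-balanced.

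\emph{Case of difference $\pi$.} Suppose $t_1,t_2\in m_h$ with $t_2-t_1\equiv\pi \pmod{4\pi}$. Replacing $t_2$ by the translate $t_1+\pi\in m_h$ (allowed by periodicity), we take $\alpha=t_1$ and $\beta=\gamma=t_1+\pi$. This is exactly the degenerate case $\gamma=\beta=\alpha+\pi$ explicitly permitted in the definition: $\beta-\alpha=\pi$, $\gamma-\beta=0$, $\gamma-\alpha=\pi$. So the triple is locally balanced, a contradiction. This case is immediate.

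\emph{Case of difference $2\pi$.} Suppose $t_1,\,t_1+2\pi\in m_h$. We first check that $R_h=\max\widetilde h>0$. Otherwise, Property C (in stretched form, $\widetilde h(s)+\widetilde h(s+\pi)\ge0$) forces $\widetilde h\equiv0$. The zero function is trivially locally balanced, since every point is a maximum point.

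The points $t_1$ and $t_1+2\pi$ are at distance $2\pi$, which is too far apart for a balanced pair, so we need a third point. The idea is to show that $t_1+\pi$ also lies in $m_h$ and then fall back on the first case.

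Consider the $1$-trigonometric function $\tau(t)=R_h\cos(t-t_1)$ on $[t_1,t_1+\pi]$. It agrees with $\widetilde h$ at $t_1$, and it is the candidate comparison function. Since $t_1$ is a maximum point of $\widetilde h$, we have $\widetilde h'_-(t_1)\ge0\ge\widetilde h'_+(t_1)$. Lemma \ref{lemma_h'} (after subtracting the trigonometric function through the point) then gives
$$\widetilde h(t)\ge R_h\cos(t-t_1)\quad\text{on }|t-t_1|<\pi.$$
The same bound holds around $t_1+2\pi$. At the midpoint $t_1+\pi$ these two bounds give only $\widetilde h(t_1+\pi)\ge -R_h$, which is not yet enough.

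The point that needs real work, and the main obstacle, is showing that $\widetilde h(t_1+\pi)=R_h$. For this we would use trigonometric convexity (\ref{1-TC}) on the triple $t_1<t_1+\pi-\varepsilon<t_1+2\pi$ with step just under $\pi$, or equivalently the sub-trigonometric comparison on the two intervals $[t_1,t_1+\pi]$ and $[t_1+\pi,t_1+2\pi]$, each of length $\pi$. On an interval of length exactly $\pi$ convexity combined with Property C is rigid. If instead this rigidity fails, we would look for a nest on $[t_1,t_1+2\pi]$ and argue that the circumcircle of $K_h$ then touches $K_h$ at two antipodal support directions, which again produces a balanced configuration in the sense of the circumcircle characterization. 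If the rigidity does hold, we get $t_1+\pi\in m_h$, and the first case gives the contradiction.
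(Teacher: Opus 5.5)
Your treatment of the case where the difference is $\pi$ (equivalently $3\pi$) modulo $4\pi$ is correct, and it is exactly the paper's first step.

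\textbf{The gap is in the case of difference $2\pi$.} Your key claim $\widetilde h(t_1+\pi)=R_h$ is false in general. There is also no rigidity of trigonometric convexity on intervals of length $\pi$ that could give it.

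\emph{A counterexample to the key claim.} Take $\widetilde h(t)=R\max\{\cos t,\cos(t-2\pi/3),\cos(t-4\pi/3)\}$. This is the support function of an equilateral triangle inscribed in $|z|=R$, viewed as a $4\pi$-periodic function. Then $h(t)=\widetilde h(2t)\in TC_2$, and the circumcenter of $K_h$ is the origin.
\begin{itemize}
\item The points $0$ and $2\pi$ are maximum points, and trigonometric convexity and Property C hold.
\item Yet $\widetilde h(\pi)=R/2<R$.
\item Your fallback fails on the same example. The contact points with the circumcircle are $R$, $Re^{2\pi i/3}$, $Re^{4\pi i/3}$, and there is no antipodal pair among them.
\item This $h$ is locally balanced via the triple $(0,2\pi/3,4\pi/3)$, so it does not contradict the lemma. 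It only shows that the contradiction cannot come from the point $t_1+\pi$.
\end{itemize}

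\emph{No purely local argument can work.} Add $a\cos t$ with $a>0$ to this $\widetilde h$, i.e.\ translate the triangle. The resulting trigonometrically convex function has maximum set exactly $\{0,2\pi\}$ modulo $4\pi$, so there is no third maximum point at all. An argument that uses only local convexity near $t_1$ and $t_1+2\pi$ therefore cannot produce a balanced triple. What excludes this situation in the lemma is the global normalization at the start of the subsection: the circumcenter of $K_h$ is at the origin. Your argument never uses it.

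\textbf{The missing idea (the paper's route).}
\begin{itemize}
\item Because the circumcenter is at the origin, $0\in{\rm conv}\{R_he^{it}: t\in m_h\}$.
\item By the first case, there is no antipodal pair among these points.
\item Hence there are $t_3,t_4\in m_h$ such that $R_he^{it_1}$, $R_he^{it_3}$, $R_he^{it_4}$ are the vertices of an acute triangle.
\item Normalize $t_1=0$. Write the geometric angles of the other two vertices as $a\in(0,\pi)$ and $b\in(\pi,2\pi)$, with $b-a<\pi$.
\item The vertex $R_h$ is represented in $m_h$ by $0$, $2\pi$ and $4\pi$. So every possible lift of $t_3,t_4$ to $(0,4\pi)$ gives a locally balanced triple. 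The four lifts give, respectively, $(0,a,b)$, $(b-2\pi,0,a)$, $(b,2\pi,a+2\pi)$ and $(2\pi,a+2\pi,b+2\pi)$.
\end{itemize}
This last step is where the hypothesis $t_1,\,t_1+2\pi\in m_h$ is genuinely used. It yields the contradiction without any information about the value of $\widetilde h$ at $t_1+\pi$.
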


\begin{proof}
 Recall that we are considering all the arguments by modulus $4\pi.$ 
 
First, if $t_1-t_2=\pm\pi$, we get a locally $2$-balanced set, that contradicts  to the condition of the lemma.

Now, suppose that $t_1-t_2=2\pi$. Without loss of generality we can assume that $t_1=0, t_2=2\pi$. 
Geometrically the corresponding points of the set $ M_h$ coincide: $R_h e^{0}=R_h e^{2\pi i}=R_h$, so there should be another two points $t_3, t_4\in m_h$ (recall that, by our assumption at the beginning of Subsection~\ref{assump_origin}, the circumcenter of $K_h$ is at the origin).  Using our first conclusion, we see that they do not coincide with the point $R_he^{i\pi}$, hence, their geometric images $R_h e^{it_3},R_h e^{ it_4}$,  together with the point $R_h$, form the vertices of an acute-angled triangle.
Let $0<t_3<t_4< 4\pi.$ Then, using the acuteness of the corresponding triangle, we have
\begin{itemize}
    \item [ ]either $0<t_3<\pi<t_4<2\pi$,
    \item [ ]or $0<t_3<\pi<3\pi<t_4<4\pi$,
    \item [ ]or $\pi<t_3<2\pi<t_4<3\pi$,   
     \item [ ]or $2\pi<t_3<3\pi<t_4<4\pi$.
\end{itemize}

In every case we get a contradiction with the condition that $h$ is not locally 2-balanced.
\end{proof}

\begin{lemma}\label{cover}
If  $  h$ is not locally $2$-balanced, then the corresponding locally convex curve $K_h$ can be covered by three nests  $N_{1},N_{2},N_{3}:$
$$K_h=\bigcup_{j=1}^3 N_{j}.$$

\end{lemma}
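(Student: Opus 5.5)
We work in the stretched variable, with $\widetilde h$ $4\pi$-periodic and $K_h$ closed, and with the circumcenter of $K_h$ at the origin, as arranged in Subsection~\ref{assump_origin}. Then $\widetilde h\le R_h$, with equality exactly on $m_h$.

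\textbf{Step 1: the structure of $m_h$.} Since the circumcenter is the origin, the points $R_he^{it}$, $t\in m_h$, are not contained in any open semicircle. By Lemma~\ref{3points} no two points of $m_h$ differ by $\pi$, $2\pi$ or $3\pi$ modulo $4\pi$. In particular no two of them are diametrically opposite, so the circumcircle is determined by a triple. Concretely, we get three points $t_1<t_2<t_3<t_1+4\pi$ of $m_h$ whose geometric images $R_he^{it_j}$ form an acute triangle containing $0$ in its interior. Moreover, failure of local $2$-balance means that consecutive points of $m_h$ along the $4\pi$-circle never realize the pattern \eqref{loc-bal}. We will argue, as in the right picture of Fig.~\ref{KI8}, that one can choose the triple so that the three gaps satisfy
$$t_2-t_1>\pi,\qquad t_3-t_2>\pi,\qquad t_1+4\pi-t_3>\pi,$$
and that every point of $m_h$ lies within distance $<\pi$ of some $t_j$, in a cluster around $t_j$. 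The gaps sum to $4\pi$, so each gap is less than $2\pi$.

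\textbf{Step 2: one nest per gap.} For each consecutive pair, say $t_1,t_2$, we apply Lemma~\ref{existence of a nest} in the original variable with $\rho=2$, $\alpha=t_1/2$, $\beta=t_2/2$. The hypothesis $\beta-\alpha<2\pi/\rho=\pi$ holds because $t_2-t_1<2\pi$. The hypothesis $h\le h(\alpha)=h(\beta)$ holds because both points attain the global maximum $R_h$. The lemma yields $s_1\in[t_2,t_1+4\pi]$ such that $N_{s_1}=K_{(s_1-2\pi,s_1)}$ is a nest. We do the same for the pairs $(t_2,t_3)$ and $(t_3,t_1+4\pi)$, which gives $s_2\in[t_3,t_2+4\pi]$ and $s_3\in[t_1+4\pi,t_3+4\pi]$.

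\textbf{Step 3: covering.} Each nest $N_{s_j}$ covers a parameter window of length $2\pi$, while $K_h$ corresponds to a parameter circle of length $4\pi$. We must show that the three windows $(s_j-2\pi,s_j)$ together cover $\mathbb R/4\pi\mathbb Z$, with endpoints included via the adjacent open windows or via the closedness of the $K_t$. Since $s_1\ge t_2$ and $s_1-2\pi\le t_1+2\pi$, the window of $N_{s_1}$ contains the part of the interval $[t_1+2\pi,t_2]$ left after removal. This is the point where the constraint matters: we have to show that the three windows overlap cyclically. A clean way to see this is to note that $s_{j+1}-s_j\le 2\pi$ for each $j$, which follows from the localization intervals of the $s_j$ combined with the gap bounds $t_{j+1}-t_j<2\pi$. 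Once consecutive right endpoints are at most $2\pi$ apart, windows of length $2\pi$ cover the circle.

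\textbf{Main obstacle.} The hard part will be Step 3, together with the careful choice of the triple in Step 1. The localization $s_1\in[t_2,t_1+4\pi]$ by itself gives only $s_2-s_1\le t_2+4\pi-t_2=4\pi$, which is far too weak. We therefore expect to sharpen the argument by choosing the crossing point of $d(t)=h(t-\pi)-h(t)$ extremally. For example, we take $s_j$ to be the first zero with $d'\ge0$ after $t_{j+1}$, and use that $d\le0$ near $t_{j+1}$ while $d\ge0$ near $t_j+2\pi$; the latter holds because $h(t_j)=R_h$ is maximal. This should pin $s_j$ inside $[t_{j+1},t_j+2\pi]$ in the original stretched variable, with $2\pi$ measured correctly, since $t\mapsto t-2\pi$ in the stretched variable corresponds to $t-\pi$ in the original one. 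With that, $s_{j+1}-s_j\le t_{j+1}+2\pi-t_{j+1}=2\pi$, and the covering follows.
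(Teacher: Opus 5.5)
\textbf{Gap in Step~3.} Your route is the paper's: take three directions in $m_h$ whose cyclic gaps lie in $(\pi,2\pi)$, apply Lemma~\ref{existence of a nest} to each consecutive pair, and check that the three parameter windows overlap cyclically. The ``main obstacle'' you describe comes from a scaling slip in Step~2. With $\alpha=t_1/2$, $\beta=t_2/2$ the lemma gives $t^*\in[\beta,\alpha+2\pi/\rho]=[t_2/2,\,t_1/2+\pi]$, so $s_1=2t^*\in[t_2,\,t_1+2\pi]$, not $[t_2,t_1+4\pi]$. Likewise $s_2\in[t_3,t_2+2\pi]$ and $s_3\in[t_1+4\pi,t_3+2\pi]$. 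These are exactly the paper's intervals for $\eta_j$. Your extremal choice of the zero of $d$ simply re-proves the lemma, so nothing needs sharpening.

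The real gap is strictness. These intervals give only $s_{j+1}-s_j\le 2\pi$, and for the open windows $(s_j-2\pi,s_j)$ that is not enough. Suppose $s_{j+1}-s_j=2\pi$. Then $s_j$ is the right end of window $j$ and the left end of window $j+1$, while window $j+2$ lies inside $(s_j,s_j+4\pi)$. So $K_{s_j}$ is not covered. The closedness of the individual $K_t$ does not rescue this, because $K_{(s-2\pi,s)}$ need not contain $K_s$.

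The equality case must be excluded, and this is exactly where the paper uses Lemma~\ref{3points}. Equality $s_{j+1}-s_j=2\pi$ forces $s_j=t_{j+1}$. The nest condition $\widetilde h(s_j-2\pi)=\widetilde h(s_j)=R_h$ then gives two points of $m_h$ at distance $2\pi$, which that lemma forbids. Hence every $s_j$ lies in the open interval, all consecutive differences are $<2\pi$, and the open windows cover $K_h$.

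\textbf{Step~1 is only announced.} The key claim holds for any acute triple, with no choice needed. Suppose the order of $t_1,t_2,t_3$ on $\mathbb R/4\pi\mathbb Z$ matched the cyclic order of the points $e^{it_j}$. Then the gaps would be two arcs $<\pi$ and one arc plus $2\pi$. The two short consecutive gaps have sum in $(\pi,2\pi)$, so they would realize \eqref{loc-bal}, contradicting the assumption. Otherwise each gap equals $2\pi$ minus one of the arcs, and so lies in $(\pi,2\pi)$. Your ``cluster'' assertion about the rest of $m_h$ is not needed.
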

\begin{proof}

From the condition  $0\in {\rm conv}(M_h)$,   taking into account the result of the preceding lemma, we get that there exist three  points $A_1,A_2, A_3$ such that 
$A_j=R_he^{i\gamma_j}\in M_h$, where $\gamma_2-\gamma_1\in (\pi,2\pi), $ $\gamma_3-\gamma_2\in (\pi,2\pi), $ and $\gamma_1+4\pi-\gamma_3\in (\pi,2\pi).$

{We shall now apply Lemma \ref{existence of a nest}. 
Recall that the variables $\gamma_j$ are angles in the stretched variable
$t$, where $\widetilde h(t)=h(t/2)$. Thus, in order to apply Lemma
\ref{existence of a nest}, which is stated for the original variable, we use
the points $\gamma_j/2$.

Apply Lemma \ref{existence of a nest} to the following three pairs:
\[
\left(\frac{\gamma_3}{2}-2\pi,\frac{\gamma_1}{2}\right),\qquad
\left(\frac{\gamma_1}{2},\frac{\gamma_2}{2}\right),\qquad
\left(\frac{\gamma_2}{2},\frac{\gamma_3}{2}\right).
\]
Since
\[
\frac{\gamma_2-\gamma_1}{2},\quad
\frac{\gamma_3-\gamma_2}{2},\quad
\frac{\gamma_1+4\pi-\gamma_3}{2}
\in \left(\frac{\pi}{2},\pi\right),
\]
the assumptions of Lemma \ref{existence of a nest} are satisfied for
$\rho=2$. Hence there exist points
\[
t_1^*\in\left[\frac{\gamma_1}{2},\frac{\gamma_3}{2}-\pi\right],
\quad
t_2^*\in\left[\frac{\gamma_2}{2},\frac{\gamma_1}{2}+\pi\right],
\quad
t_3^*\in\left[\frac{\gamma_3}{2},\frac{\gamma_2}{2}+\pi\right],
\]
such that the subarcs
\[
N_{\eta_j}=K_{(\eta_j-2\pi,\eta_j)},\qquad \eta_j:=2t_j^*,
\]
are nests. Equivalently,
\[
\eta_1\in[\gamma_1,\gamma_3-2\pi],\qquad
\eta_2\in[\gamma_2,\gamma_1+2\pi],\qquad
\eta_3\in[\gamma_3,\gamma_2+2\pi].
\]}
Recall that, by the definition of the nest, $K_h$ has the double support line $L_{\eta_j}$, hence, by Lemma \ref{3points} $\eta_j$ cannot coincide with any of the directions $\gamma_j$, $\gamma_j+\pi k.$

We can assume that $\gamma_1=0.$ Then we have
 \begin{multline*}
    \eta_2-2\pi< 0=\gamma_1<\eta_1<\gamma_3-2\pi<\eta_3-2\pi< \gamma_2<\eta_2
    <2\pi= \gamma_1+ 2\pi\\<\eta_1+2\pi<\gamma_3< \eta_3<\gamma_2+ 2\pi< \eta_2
+2\pi<4\pi=\gamma_1+4\pi<\eta_1+4\pi,
\end{multline*}
therefore $ K_h\subset N_{\eta_1}\bigcup N_{\eta_2}\bigcup N_{\eta_3}$  (see Fig.~\ref{KI3}).

\begin{figure}[h]
    \centering
   \includegraphics[width=120mm]{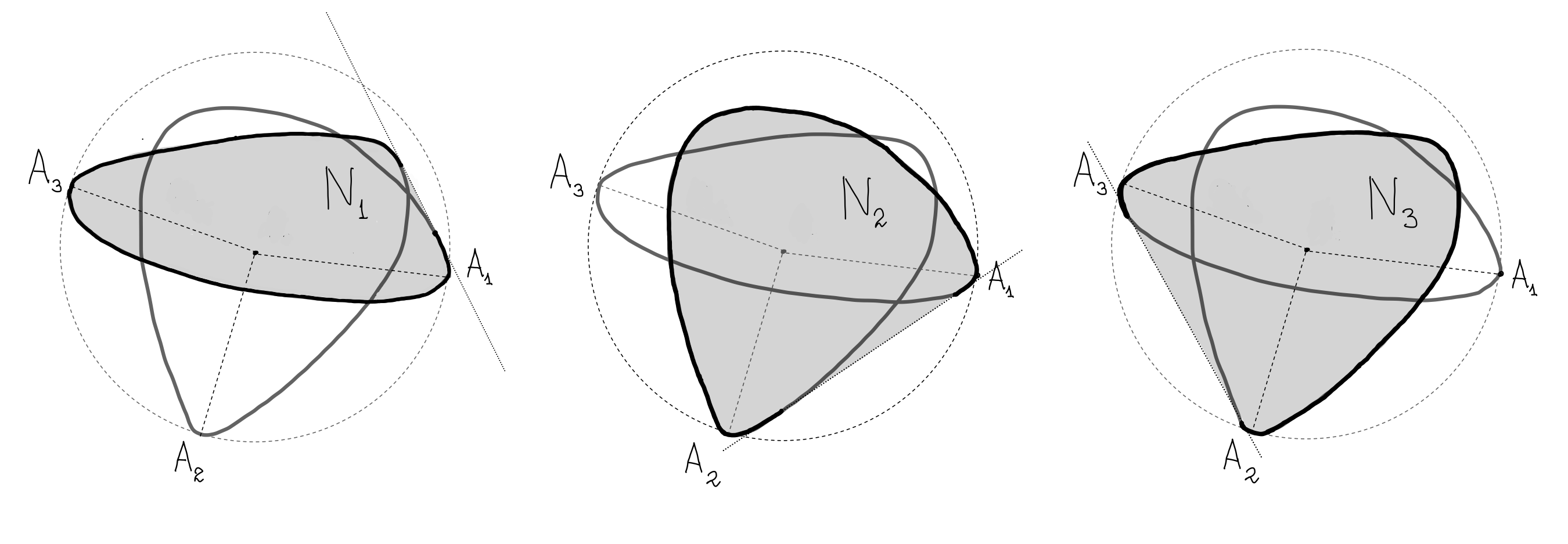}
    \caption{\small Three nests $N_1, N_2, N_3$ that form the covering of the locally convex curve corresponding to $h\in TC_2$ which is  not locally 2-balanced.
}
      \label{KI3}
    
    \end{figure}
    \end{proof}
\begin{rem}
        Note, that the covering of the locally convex curve by nests, given in the proof of  Lemma \ref{cover}, is not necessarily unique.
\end{rem}

\subsubsection{Construction of a type-minimizing measure.}
 Suppose  that the function $h$ is not locally $2$-balanced.  Then item (5) of Theorem  
\ref{PartI} gives
$$\sigma_U(\Delta_h)<\sigma_Z(\Delta_h).$$
Using Theorem \ref{T2} and the identity $\sigma_Z(\Delta_h)=R_h$, we get
 $R_{\rm loc}^*<R_h.$     By Lemma 
\ref{cover}, we have $0\le\gamma_1<\gamma_2<\gamma_3<4\pi,$ where  $\widetilde h(\gamma_j)=\displaystyle \max_{t\in[0,4\pi]} \widetilde h(t)=R_h$, and
$$K_h=\bigcup_{j=1}^3 N_{\eta_j},$$
so that $$K_{[\gamma_{j-1},\gamma_{j}]}\subset N_{\eta_j}\subset D_{j}, \;\;j=1,2,3,$$
with  $\gamma_0:=\gamma_3-4\pi$
and $D_j=D(N_{\eta_j}).$
Then, for every $t\in [\eta_j-2\pi,\eta_{j}]$, we have
$${\widetilde h(t)\le \tau_{j}(t)=R_{j}-r_{j}\cos(t-\theta_{j}),}$$  where $O_j$ is circumcenter of the nest $N_j$, $R_j$ is its circumradius, and the parameters $r_j,\theta_j$ are chosen so that  $$O_{j}=r_je^{i(\theta_{j}+\pi)}=-r_je^{i\theta_{j}}.$$

{In particular, 
we have
\[
\tau_j(\gamma_{j-1})\ge \widetilde h(\gamma_{j-1})=R_h,\qquad \tau_j(\gamma_j)\ge\widetilde h(\gamma_j)= R_h.
\]}

 Let $\widetilde D_j$ be the disk concentric with $D_{j}$  with radius $R^*_{\rm_loc}$.
 Then the support function of the set $\widetilde D_j$ is
 $$\widehat\tau_{j}(t)=R^*_{\rm loc}-r_j\cos(t-\theta_j)\ge \tau_j(t).$$

{
Since $K_{[\gamma_{j-1},\gamma_j]}\subset N_{\eta_j}\subset D_j$ and
\[
\widetilde h(\gamma_{j-1})=\widetilde h(\gamma_j)=R_h,
\]
the disk $D_j$ contains the two corresponding points
$A_{j-1}$ and $A_j$ of the global circumcircle of $K_h$.

We claim that
\[
\tau_j(\gamma_{j-1})>R_h,\qquad \tau_j(\gamma_j)>R_h.
\]
Indeed, suppose that
$\tau_j(\gamma_j)=R_h$, then the support line of $D_j$ in the direction
$\gamma_j$ touches $\partial D_j$ at $A_j$.

Hence the center of $D_j$ is
\[
O_j=(R_h-R_j)e^{i\gamma_j}.
\]
Therefore
\[
|A_{j-1}-O_j|^2-R_j^2
=
2R_h(R_h-R_j)(1-\cos(\gamma_j-\gamma_{j-1}))>0,
\]
because $R_j<R_h$ and $\gamma_j-\gamma_{j-1}\in(\pi,2\pi)$. Thus
$A_{j-1}\notin D_j$, a contradiction. Hence
$\tau_j(\gamma_j)>R_h$. The proof of
$\tau_j(\gamma_{j-1})>R_h$ is identical.

}

Consequently, we have
$$r_j\cos(\gamma_j-\theta_j)< R^*_{\rm loc}-R_h<0,$$
and
$$r_j\cos(\gamma_{j-1}-\theta_j)< R^*_{\rm loc}-R_h<0.$$
Since $\gamma_j-\gamma_{j-1}\in(\pi,2\pi)$, the function $r_j\cos(t-\theta_j)$ attains its maximum on the interval  $(\gamma_{j-1},\gamma_j)$. Without loss of generality we may suppose that $\gamma_{j-1}<\theta_j<\gamma_j$. Thus, there exists $t\in (\theta_j,\gamma_j)$ such that $\widehat \tau_j(t)=R_h.$ 
For $j=1,2,3$ we put 
$$x_j=\max\{t\in(\theta_j,\gamma_j): \widehat\tau_{j}(t)=R_h\}.$$

Analogously, we define 
$$y_j=\min\{t\in(\gamma_j,\theta_{j+1}):\widehat\tau_{j+1}(t)=R_h\}.$$
We use the cyclic notation, putting
\[
x_0:=x_3-4\pi,\qquad y_0:=y_3-4\pi.
\]

Now, since 
 $\widehat\tau_j(t)\le R_{\rm loc}^*<R_h$ for all $t$ such that $|t-\theta_{j}|\le \pi/2,$ while $\widehat\tau_j(\gamma_j)> R_h$
and $\widehat\tau_j(\gamma_{j-1})> R_h$ we conclude that for every  $j=1,2,3$ we have 
  $\pi<x_{j}-y_{j-1}<2\pi.$ It also follows that for every $j=1,2,3$ $y_j-x_j<\pi$.

Let us focus now on the  interval $[x_j,y_j]$. 
At the point $x_j$ the function $\widehat\tau_{j}$ is strictly increasing, and has its maximum at the point $M_{j}:=\theta_{{j}}+\pi>x_j$. Proceeding to the other end of the interval, we see that $\widehat\tau_{j+1} $ decays at the point $y_j$ and  attains maximum at the point $M_{j+1}:=\theta_{j+1}-\pi<y_j$.

Moreover, we have $\widehat\tau_{j}(x_j)<\widehat\tau_{j+1}(x_j)$ and  $\widehat\tau_{j}(y_j)>\widehat\tau_{j+1}(y_j),$ hence the trigonometric function $\widehat\tau_{j}(t)-\widehat\tau_{j+1}(t)\in T_1$ is negative at $x_j$ and positive at $y_j$. Since the interval $[x_j,y_j]$ has length less than $\pi$, this function has a unique zero  $\zeta_j\in(x_j,y_j)$. At this zero it changes sign from negative to positive and therefore  
$$\widehat\tau'_{j}(\zeta_j)-\widehat\tau'_{j+1}(\zeta_j)>0.$$ 

From the geometric point of view, the equality $\widehat\tau_{j}(\zeta_j)=\widehat\tau_{j+1}(\zeta_j)$ means that the line $x\cos\zeta_j+y\sin\zeta_j=\widehat\tau_{j}(\zeta_j)$ is a common tangent line  to $\widetilde D_{j+1}$   and $\widetilde D_j$. By the construction, this tangent line is parallel to the line $(O_{j} O_{j+1})$. 

Set
 $2\pi\mu_j:=\widehat\tau'_{j}(\zeta_j)-\widehat\tau'_{j+1}(\zeta_j)>0.$ 
Using the chosen representation of the centres
$O_{j}=-r_je^{i\theta_{j}}, O_{j+1}=-r_{j+1}e^{i\theta_{j+1}}$, we obtain
$$2\pi\mu_j=r_{{j}}\sin(\zeta_j-\theta_{{j}})-r_{j+1}\sin(\zeta_j-\theta_{{j+1}})={\rm Im}((O_j-O_{j+1})e^{-i\zeta_j}),$$

Since the line $(O_jO_{j+1})$ is parallel to the tangent direction at
angle $\zeta_j$, which is represented by
\(ie^{i\zeta_j}\), the number  $
(O_j-O_{j+1})e^{-i\zeta_j}
$
is purely imaginary. Moreover, its imaginary part is positive by the
inequality above. Therefore
$$
\operatorname{Im}\left((O_j-O_{j+1})e^{-i\zeta_j}\right)
=
|O_jO_{j+1}|.
$$
Consequently,
\[
2\pi\mu_j=|O_jO_{j+1}|.
\]

Put
$$ \widetilde H(t):=
\begin{cases}
 \widehat \tau_1(t), & t\in [\zeta_{3}-4\pi,\zeta_1];\\
 \widehat \tau_2(t), & t\in [\zeta_1,\zeta_2];\\
       \widehat \tau_3(t), & t\in [\zeta_2,\zeta_3];\\
\end{cases} 
$$
our construction yields $\widetilde h(t)\le \widetilde H(t),\;\;\forall t\in[0,4\pi].$
Define
$$ \widetilde k(t):=
R^*_{\rm loc}-\widetilde H(t), \;\;\;k(t):=\widetilde k(2 t),
$$
then $h(t)+k(t)\le R^*_{\rm loc}$.
On the other hand,
$$\widetilde k(t):=
\begin{cases}
r_1\cos(t-\theta_{1}) & t\in [\zeta_3-4\pi,\zeta_1];\\
  r_2\cos(t-\theta_{2}) & t\in [\zeta_1,\zeta_2];\\
   r_3\cos(t-\theta_{3}), & t\in [\zeta_{2},\zeta_3],
    \end{cases}, 
$$
The corresponding  $2$-trigonometrically convex function  $k(t)=\widetilde k(2 t)$ is associated with the measure 
$$\Delta_0:=\sum_{j=1}^3\mu_j\delta_{\zeta_{j}/2}.$$

Looking from the geometric point of view, the operation of adding $k(t)$ to $h(t)$ corresponds to the following local surgery of $K_h$: we cut $K_h$ at the points  $\zeta_{j}$, where we have $L_{\zeta_{j}}\| (O_{j}O_{j+1})$, and insert  segments of length $2\pi\mu_j=|O_jO_{j+1}|$.

\subsection*{Acknowledgements}

The author is deeply grateful to A. Borichev and M. Sodin for their continuous support and detailed feedback throughout the preparation of this work.

The author also gratefully acknowledges Tel Aviv University, where a substantial part of this work was carried out.

This work was supported by Israel Science Foundation grant No. 1288/21 and by the Center for Integration in Science of the Ministry of Aliyah and Integration of Israel. The author gratefully acknowledges this support.

\bigskip
\end{document}